\documentclass[a4paper,12pt]{amsart}
\usepackage{amsmath, amscd, amssymb, mathrsfs, mathtools,fullpage}
\usepackage[all]{xy}
\usepackage[l2tabu, orthodox]{nag}
\usepackage{hyperref}

\title{Topological polylogarithms and $p$-adic interpolation of $L$-values of totally real fields}

\author{Alexander Beilinson}
\address[Beilinson]{Dept. of Mathematics, University of Chicago, Chicago, IL 60637,
USA}
\email{sasha@math.uchicago.edu}

\author{Guido Kings}
\address[Kings]{Fak. f\"ur Mathematik \\
Universit\"at Regensburg\\
93040 Regensburg\\
Germany}
\email{guido.kings@ur.de}

\author{Andrey Levin}
\address[Levin]{National Research University HSE,
7 Vavilova Str., Moscow,  Russia}
\email{alevin57@gmail.com}
\thanks{The authors research was partially supported by the following grants:  NSF grant DMS-1406734 (Beilinson), SFB 1085 Higher invariants (Kings)}

\theoremstyle{plain}
    \newtheorem{theorem}{Theorem}[subsection]
    \newtheorem{lemma}[theorem]{Lemma}
    \newtheorem{proposition}[theorem]{Proposition}
    \newtheorem{corollary}[theorem]{Corollary}
\theoremstyle{definition}
    \newtheorem{definition}[theorem]{Definition}

\theoremstyle{remark}
    \newtheorem{remark}[theorem]{Remark}

\DeclareMathOperator{\TSym}{TSym}
\DeclareMathOperator{\Sym}{Sym}
\DeclareMathOperator{\Hom}{Hom}

\DeclareMathOperator{\Aut}{Aut}
\DeclareMathOperator{\Spec}{Spec}
\DeclareMathOperator{\Spf}{Spf}

\DeclareMathOperator{\GL}{GL}
\DeclareMathOperator{\PGL}{PGL}

\DeclareMathOperator{\gr}{gr}
\DeclareMathOperator{\pr}{pr}

\DeclareMathOperator{\mom}{mom}

\DeclareMathOperator{\Log}{Log}
\DeclareMathOperator{\pol}{pol}
\DeclareMathOperator{\Tr}{Tr}
\DeclareMathOperator{\N}{N}

\DeclareMathOperator{\vol}{vol}
\DeclareMathOperator{\ev}{ev}
\DeclareMathOperator{\Meas}{Meas}
\DeclareMathOperator{\restr}{restr}
\DeclareMathOperator{\contr}{contr}
\DeclareMathOperator{\mult}{mult}

\newcommand{\can}{\mathrm{can}}
\newcommand{\res}{\mathrm{res}}
\newcommand{\cont}{\mathrm{cont}}
\newcommand{\prolim}{\varprojlim}

\newcommand{\sLog}{{\sL og}}
\newcommand{\fra}{\mathfrak{a}}
\newcommand{\frb}{\mathfrak{b}}
\newcommand{\frc}{\mathfrak{c}}

  \newcommand{\frf}{\mathfrak{f}}

\newcommand{\sC}{\mathscr{C}}

\newcommand{\sF}{\mathscr{F}}
\newcommand{\sG}{\mathscr{G}}

\newcommand{\sK}{\mathscr{K}}
\newcommand{\sL}{\mathscr{L}}

\newcommand{\sP}{\mathscr{P}}

\newcommand{\cA}{\mathcal{A}}

\newcommand{\cO}{\mathcal{O}}

\newcommand{\cU}{\mathcal{U}}

\newcommand{\CC}{\mathbf{C}}
\newcommand{\HH}{\mathbf{H}}
\newcommand{\QQ}{\mathbf{Q}}
\newcommand{\RR}{\mathbf{R}}
\newcommand{\ZZ}{\mathbf{Z}}

\newcommand{\bbH}{\mathbb{H}}

\newcommand{\bfone}{\mathbf{1}}

\newcommand{\Zp}{{\ZZ_p}}
\newcommand{\Gm}{\mathbf{G}_m}

    \DeclareFontFamily{U}{wncy}{}
    \DeclareFontShape{U}{wncy}{m}{n}{<->wncyr10}{}
    \DeclareSymbolFont{mcy}{U}{wncy}{m}{n}
    \DeclareMathSymbol{\Sha}{\mathord}{mcy}{"58}

\newcommand{\Eis}{\mathrm{Eis}}

\newcommand{\tors}{\mathrm{tors}}

\newcommand{\isom}{\cong}
\newcommand{\sgn}{\mathrm{sgn}}
\newcommand{\id}{\mathrm{id}}

\numberwithin{equation}{subsection}

\begin{document}

\begin{abstract}
We develop the topological polylogarithm which provides an integral version
of Nori's Eisenstein cohomology classes for $\GL_n(\ZZ)$ and
yields  classes with values in an Iwasawa algebra. This implies directly the integrality properties of
special values of $L$-functions of totally real fields and a construction of the associated $p$-adic $L$-function.
Using a result of Graf, we also apply this to prove some
integrality and $p$-adic interpolation results for
the Eisenstein cohomology
of Hilbert modular varieties.
\end{abstract}

\maketitle

\tableofcontents

\section*{Introduction}
In the beginning of the 90s Nori \cite{Nori} and Sczech \cite{Sczech} almost simultaneously and independently developed the so called 
Eisenstein cohomology classes for $\GL_n(\ZZ)$ with rational 
coefficients and showed that one can get the Klingen-Siegel
theorem, about the rationality of zeta values of totally real
fields at negative integers, as a direct consequence. 

The approach by Nori involves the de Rham complex and is therefore 
restricted to rational coefficients. Sczech's construction is
analytic in nature and he gets rational cohomology classes in
the end by studying Dedekind sums. 

In this paper we present a different approach, depending on the topological polylogarithm, which is very much inspired by Nori's beautiful construction, but works with almost arbitrary 
coefficients. Moreover, the cohomology class we construct
has values in the formal completion of the group ring 
of a  finitely generated free abelian group and is hence exactly the Iwasawa algebra if one considers 
$p$-adic coefficents.

The main idea of our construction can be explained as follows.
Nori's cohomology classes should be really considered not as classes
on the locally symmetric space associated to $\GL_n(\ZZ)$ but rather
on the universal family of topologicial metrized tori above it. 
On this universal family these classes are completely determined by a residue condition,
so that the comparison map between de Rham and singular cohomology
in Nori's approach becomes unnecessary. In particular, Nori's construction interpreted in this way
works for almost arbitrary coefficients.

From our construction we get directly the integrality results
of Deligne-Ribet and the $p$-adic
interpolation of the special $L$-values of totally real fields. 
In fact we get directly cohomology classes with values in
the Iwasawa algebra. 
We also
explain a new result, building upon results of Graf \cite{Graf},
about the integrality and $p$-adic interpolation of Eisenstein cohomology classes for Hilbert modular varieties.

In recent years the question of finding integral versions of
Sczech's Eisenstein cocycle or of Shintani's
construction received considerable interest. 
Charollois and Dasgupta were able to refine Sczech's construction
to the integral level in \cite{Charollois-Dasgupta}. But because
of problems with their smoothing construction they could only
prove part of the integrality result of Deligne-Ribet for the
$L$-values. 
Another approach to $p$-adic interpolation is via the Shintani
cocycle of Hill \cite{Hill} and Solomon, which was refined to
give $p$-adic interpolation by Spiess \cite{Spiess-Shintani-cocycles} and Steele \cite{Steele} independently. 

Our approach is completely different from all the above and 
relies only on the cohomological properties of the so called
logarithm sheaf. Being purely topological, we do not need
to choose any extra data as for the Shintani cocycle (which
depends on a Shintani decomposition) nor do we have any severe 
restrictions on the coefficients as our approach is not 
analytic at all.

Moreover, in a geometric situation, where one replaces the tori 
by abelian varieties, a completely parallel story  for
other cohomology theories (and even for
motivic cohomology) can be developed. This leads to $p$-adic 
interpolation of motivic cohomology classes and hence of 
non-critical $L$-values (see \cite{Kings-Eisenstein} and
the applications of this theory in \cite{KLZ1}). This gives 
a further argument for pursuing the approach by the topological
polylogarithm.

The essential results on $p$-adic interpolation of $L$-values of this paper were obtained many years ago 
in the nineties  but were never 
published. The only exception is the case of the Riemann zeta 
function which is covered in \cite{BeilinsonLevin}.
The newer results 
on the Eisenstein classes of Hilbert modular 
varieties and their
$p$-adic interpolation depends on the purely topological construction
of the Harder's Eisenstein cohomology in the thesis of Graf (forthcoming \cite{Graf}).

\subsection*{Acknowledgements}
The first author would like to thank M. Nori for the introduction
to his Eisenstein cohomology classes back in 1992. 
The second author would like to thank the University of Chicago
for a very profitable stay in 2002 where he became first 
acquainted with the topological polylogarithm. He also would like to 
thank M. Nori for discussions at that time about
the possibility to construct Harder's Eisenstein classes for
Hilbert modular varieties with his $\GL_n(\ZZ)$ cohomology classes.

\section{The topological polylogarithm}
\subsection{Group rings of lattices}
We consider free abelian groups $L$ of finite rank $n$, which we call \emph{lattices}.
Let $A$ be a commutative ring and  $A[L]$ the group ring of $L$ with coefficients in $A$. We write
$\delta:L\to A[L]^\times$, $\ell\mapsto \delta_\ell$ for the universal group homomorphism.
In particular, $\ell\in L$ acts on $A[L]$ by multiplication with $\delta_\ell$. Let $L_A:=A\otimes_\ZZ L$. 

\begin{definition}
The completion of  $A[L]$ with respect to the augmentation ideal $J$ is denoted by
\[
R:=R(L):=\varprojlim_k A[L]/J^k.
\]
We write $I:=JR$ and consider $R$ with the filtration defined by the $I^kR$ and the induced $L$-action
$\delta:L\to R^\times$. We write $R^{(k)}:=R/I^{k+1}$ and
$R_A$ if we need to express the dependence on $A$.
\end{definition}
\begin{remark}
Let $T(L^\vee):=\Spec A[L]=\underline{\Hom}(L,\Gm)$ be 
the algebraic torus with character group $L$ over $\Spec A$. 
The augmentation ideal $J:=\ker(A[L]\to A)$ defines the
zero section of the smooth map $T(L^\vee)\to \Spec A$
and hence is a regular ideal. Note also that it is stable under the $L$-action. Then
$\Spf R$ is the formal group associated to $T(L^\vee)$.
In particular, if $\ell_1,\ldots,\ell_n$ is a basis of $L$, then $R$ is a power series
ring in the $\delta_{\ell_1}-1,\ldots,\delta_{\ell_n}-1$. 
\end{remark}
\begin{lemma}\label{lemma:gr-hom} There is
an isomorphism 
\begin{equation}
\Sym^\cdot L_A\isom \gr^\cdot_I R=\bigoplus_{k\ge 0} I^k/I^{k+1}.
\end{equation}
The induced action of $L$ on $\gr^\cdot_I R$ is trivial.
\end{lemma}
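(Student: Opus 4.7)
The plan is to produce the map explicitly from $L$, extend it by the universal property of the symmetric algebra, and check that it is an isomorphism on each graded piece by choosing a basis of $L$ and invoking the identification of $R$ with a power series ring already noted in the preceding remark.

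First I would construct a map $L_A \to I/I^2$ by sending $\ell \mapsto \delta_\ell - 1 \pmod{I^2}$. The multiplicativity identity
\[
\delta_{\ell+\ell'} - 1 = (\delta_\ell - 1) + (\delta_{\ell'} - 1) + (\delta_\ell - 1)(\delta_{\ell'} - 1)
\]
shows that the last term lies in $I^2$, so the assignment is additive in $\ell$, hence a well-defined $\ZZ$-linear, and by $A$-linear extension $A$-linear, map $L_A \to I/I^2$. Since $\gr_I^\cdot R$ is a commutative graded $A$-algebra, the universal property of the symmetric algebra yields a graded $A$-algebra homomorphism
\[
\Sym^\cdot L_A \longrightarrow \gr^\cdot_I R.
\]

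Next I would verify this is an isomorphism by passing to a basis. Choose a $\ZZ$-basis $\ell_1,\ldots,\ell_n$ of $L$ and set $T_i := \delta_{\ell_i} - 1$. By the remark, $R \cong A[[T_1,\ldots,T_n]]$ with $I = (T_1,\ldots,T_n)$, so $\gr^\cdot_I R$ is the polynomial algebra $A[T_1,\ldots,T_n]$ on the classes $\bar T_i \in I/I^2$. Under the map above, the basis $\ell_i \otimes 1$ of $L_A$ goes to $\bar T_i$, so the map sends the free polynomial generators of $\Sym^\cdot L_A$ to the free polynomial generators of $\gr^\cdot_I R$, and is therefore an isomorphism of graded $A$-algebras.

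Finally, for the triviality of the $L$-action on $\gr^\cdot_I R$: the element $\ell \in L$ acts on $R$ by multiplication by $\delta_\ell = 1 + (\delta_\ell - 1)$, and $\delta_\ell - 1 \in I$. Hence for any $x \in I^k$ one has $\delta_\ell \cdot x - x = (\delta_\ell - 1) x \in I^{k+1}$, so the induced action on $I^k/I^{k+1}$ is the identity.

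There is no serious obstacle here; the only point that requires a moment's thought is checking that the assignment $\ell \mapsto \delta_\ell - 1 \pmod{I^2}$ is additive, which is exactly the content of the displayed identity above and which is also what forces the map to factor through $\Sym^\cdot L_A$ rather than through any non-commutative tensor construction.
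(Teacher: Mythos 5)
Your proof is correct and follows essentially the same route as the paper's: the map $\ell\mapsto\delta_\ell-1\bmod I^2$ and the argument that $L$ acts trivially on $\gr^\cdot_I R$ are identical to the paper's. The only divergence is at the isomorphism step, where you verify it concretely via the power-series presentation $R\isom A[[\delta_{\ell_1}-1,\ldots,\delta_{\ell_n}-1]]$ from the preceding remark, while the paper instead cites the regularity of $J$ (also recorded in that remark) and the general fact that $\Sym^\cdot(I/I^2)\to\gr^\cdot_I R$ is an isomorphism for a regular ideal; both verifications are legitimate.
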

\begin{proof}
As $L$ is abelian we have an isomorphism $L_A\isom J/J^2\isom I/I^2$, which sends $1\otimes \ell$ to
$\delta_\ell-1\bmod J^2$. As $J$ and hence $I$ is a regular
ideal, the induced map $\Sym^\cdot L_A\to \gr^\cdot_I R$
is an isomorphism. If $a\in I^k$ then $(\delta_\ell-1)\cdot a\equiv 0\bmod I^{k+1}$, so that $\delta_\ell\cdot a\equiv a\bmod I^{k+1}$, which implies that $L$ acts trivially on $\gr^\cdot_I R$.
\end{proof}
The formation of $R$ is functorial in $L$: For each homomorphism $\varphi:L\to L'$ we have an $A$-algebra homomorphism
\[
\varphi_R:R\to R', 
\]
where $R':=R(L')$,  
which respects the filtrations by $I$ and $I'$.
\begin{definition}
A homomorphism of lattices $\varphi:L\to L'$ is called an \emph{isogeny}, if it is injective with finite cokernel. 
For an isogeny $\varphi$ we denote by $\deg \varphi:=\#(L'/\varphi(L))$  the \emph{degree} of $\varphi$. 
\end{definition}
\begin{proposition}\label{prop:gr-hom}
Let $\varphi:L\to L'$ be an isogeny with $\deg\varphi$ invertible
in $A$, then 
\[
\varphi_R:R\to R'
\]
is an isomorphism.
\end{proposition}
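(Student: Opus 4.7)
The plan is to reduce to the associated graded level using Lemma \ref{lemma:gr-hom} and then bootstrap back up by completeness. Since the augmentation map $A[L] \to A$ factors through $A[L'] \to A$ via $A[\varphi]$, the algebra homomorphism $A[\varphi]$ sends $J$ into $J'$, and therefore $\varphi_R$ sends $I^k$ into $(I')^k$ for every $k$. Thus $\varphi_R$ is a morphism of filtered $A$-algebras, and the central question becomes whether the induced map on associated graded pieces is an isomorphism.

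Under the identifications of Lemma \ref{lemma:gr-hom}, the map $\gr^\cdot_I R \to \gr^\cdot_{I'} R'$ induced by $\varphi_R$ is $\Sym^\cdot(\varphi_A) : \Sym^\cdot L_A \to \Sym^\cdot L'_A$, because $\varphi_R(\delta_\ell - 1) = \delta_{\varphi(\ell)} - 1$ and the generator $1\otimes \ell$ of $L_A$ corresponds to $\delta_\ell - 1 \bmod J^2$. It therefore suffices to show that $\varphi_A : L_A \to L'_A$ is an isomorphism. Applying $-\otimes_\ZZ A$ to the exact sequence
\begin{equation*}
0 \to L \xrightarrow{\varphi} L' \to L'/\varphi(L) \to 0,
\end{equation*}
we get $\mathrm{Tor}_1^\ZZ(A, L'/\varphi(L)) \to L_A \to L'_A \to (L'/\varphi(L))\otimes_\ZZ A \to 0$. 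The cokernel $L'/\varphi(L)$ is a finite abelian group killed by $\deg\varphi$, which by hypothesis is invertible in $A$, so both its $A$-tensor and Tor vanish. Hence $\varphi_A$ is an isomorphism, and consequently each $\Sym^k(\varphi_A)$ is an isomorphism.

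To finish, I would invoke the standard completeness argument: since $R$ and $R'$ are complete and separated for the $I$- and $I'$-adic filtrations respectively, and $\varphi_R$ is a filtered morphism inducing an isomorphism on each graded piece, a diagram chase on the short exact sequences $0 \to I^{k}/I^{k+1} \to R/I^{k+1} \to R/I^{k} \to 0$ shows by induction that $\varphi_R$ induces isomorphisms $R/I^{k+1} \isom R'/(I')^{k+1}$ for all $k$. Passing to the inverse limit yields that $\varphi_R$ itself is an isomorphism.

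The only real point where a mistake could creep in is the verification that $\varphi_A$ is an isomorphism, since one might forget the $\mathrm{Tor}$-term and worry about loss of injectivity after tensoring; this is handled cleanly by the observation that the cokernel is killed by the invertible integer $\deg \varphi$, which kills both the $A$-tensor and the $\mathrm{Tor}$. Everything else is formal.
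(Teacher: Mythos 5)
Your proof is correct and follows essentially the same route as the paper: reduce to the associated graded map via completeness and separatedness, identify it with $\Sym\varphi_A$ using the generators $\delta_\ell-1$, and conclude from the invertibility of $\deg\varphi$ in $A$. You merely spell out two steps the paper leaves implicit (the $\mathrm{Tor}$ argument showing $\varphi_A$ is an isomorphism, and the inductive limit argument recovering the isomorphism from the graded pieces), both of which are fine.
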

\begin{proof}
Both rings $R$, $R'$ are complete and separated so that it
suffices to show that 
\[
\gr_I\varphi_R:\Sym L_A\to \Sym L'_A
\]
is an isomorphism.
The $A$-module $I^k/I^{k+1}$ is generated by products of elements of the form $(\delta_\ell-1)^r$
and $\varphi_R$ maps these to $(\delta_{\varphi(\ell)}-1)^r$. This shows that $\gr_I\varphi_R=\Sym \varphi_A$, where
$\varphi_A:L_A\to L'_A$ is the induced map. If $\deg\varphi$ is invertible in $A$, $\varphi_A$ and hence $\gr_I\varphi$ 
is an isomorphism.
\end{proof}
For the closer investigation of $R$ we need the completion
of the divided power algebra of $L_A$.
\begin{definition}
Let $\Gamma L_A=\bigoplus_{k\ge 0}\Gamma_kL_A$ be the graded 
divided power algebra of $L_A$. For
$\ell\in L_A$ we write $\ell^{[k]}$ for the 
$k$-th divided power of $\ell$ and write 
\[
\widehat{\Gamma}L_A:=\prolim_r \Gamma L_A/I^{[r]}
\]
where $I:=\Gamma_+L_A$ is the augmentation ideal.  
\end{definition}
Note that one has $\ell^n=n!\ell^{[n]}$ and the formula
\begin{align}
(\ell+\ell')^{[k]}=\sum_{m=0}^{k}\ell^{[m]}\ell'^{[k-m]}
\end{align} 
in $\Gamma L_A$. As $\Gamma(L_A\oplus L_A)\isom \Gamma L_A\otimes_A\Gamma L_A$ the algebra  
$\Gamma L_A$ is a graded Hopf algebra. Its (graded)
dual is $(\Gamma L_A)^*\isom \Sym L_A^*$, where $L_A^*$ is the
$A$-dual of $L_A$. As $L_A$ is free one has a canonical 
isomorphism
\begin{equation}
\Gamma L_A\isom \TSym L_A
\end{equation}
with the Hopf algebra of the symmetric tensors. The isomorphism
$L_A\isom \Gamma_1L_A$ induces an $A$-algebra homomorphism 
\begin{equation}\label{eq:sym-hom}
\widehat{\Sym} L_A\to \widehat{\Gamma}L_A,
\end{equation}
where $\widehat{\Sym} L_A$ is the completion of $\Sym L_A$ at
its augmentation ideal.
Explicitly, if we choose a basis $\ell_1,\ldots,\ell_n$ of $L_A$,
this homomorphism is given by
\begin{equation}
\ell_1^{k_1}\cdots \ell_n^{k_n}\mapsto 
k_1!\cdots k_n!\ell_1^{[k_1]}\cdots \ell_n^{[k_n]}.
\end{equation}
From this description it is clear that \eqref{eq:sym-hom} is
an isomorphism if $A$ is a $\QQ$-algebra.
\begin{lemma}\label{lemma:exponential-map}
There is an $A$-algebra homomorphism
\[
\exp^*:R\to \widehat{\Gamma}L_A
\]
compatible with the filtrations. We write $\exp^*_k:R\to \Gamma_kL_A$
for the composition with the projection to $\Gamma_kL_A$.
\end{lemma}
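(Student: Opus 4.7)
The plan is to construct $\exp^*$ via the universal property of the group ring $A[L]$. The key step is to define a group homomorphism
\[
\exp \colon L \longrightarrow \widehat{\Gamma}L_A^{\times}, \qquad \ell \longmapsto \sum_{k \geq 0} \ell^{[k]}.
\]
The series converges in the completion since its $k$-th summand lies in $\Gamma_k L_A$. The divided-power addition formula $(\ell+\ell')^{[k]} = \sum_{m=0}^{k}\ell^{[m]}\ell'^{[k-m]}$ recalled above immediately gives $\exp(\ell+\ell') = \exp(\ell)\exp(\ell')$, and $\exp(\ell)$ is a unit with inverse $\exp(-\ell)$. This is the one place where the divided-power structure is essential: it permits the exponential to be defined integrally over an arbitrary $A$, avoiding the factorials $k!$ that would be needed in $\widehat{\Sym} L_A$.

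The rest is formal. By the universal property of $A[L]$, the map $\exp$ extends to an $A$-algebra homomorphism $A[L]\to \widehat{\Gamma}L_A$ sending $\delta_\ell$ to $\exp(\ell)$. Under this map $\delta_\ell - 1 \mapsto \exp(\ell) - 1 \in I^{[1]}$, so a product of $r$ such elements lies in $(I^{[1]})^r \subseteq I^{[r]}$ by the standard property $I^{[m]} I^{[n]} \subseteq I^{[m+n]}$ of divided-power ideals. Since $J$ is generated by the elements $\delta_\ell - 1$, one has $\exp^*(J^r) \subseteq I^{[r]}$, so the map is continuous for the $J$-adic topology on $A[L]$ and the $I^{[\bullet]}$-topology on $\Gamma L_A$. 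As $\widehat{\Gamma}L_A$ is complete and separated, $\exp^*$ extends uniquely to the $J$-adic completion $R$. The filtration compatibility $\exp^*(I^r) \subseteq I^{[r]}$ then follows by continuity, since $I^r = J^r R$ is the closure of $J^r$ in $R$.

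The main obstacle is almost entirely conceptual: one has to recognise that the divided-power algebra is precisely the receptacle in which $\exp$ can be defined over an arbitrary base. Once this is spotted, the group-homomorphism property is taken care of by the binomial identity for divided powers, and the filtration compatibility reduces to the elementary multiplicative behaviour $I^{[m]} I^{[n]} \subseteq I^{[m+n]}$ of divided-power ideals.
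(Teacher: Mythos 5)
Your proof is correct and follows essentially the same route as the paper: define $\ell\mapsto\sum_{k\ge 0}\ell^{[k]}$ as a group homomorphism into $(\widehat{\Gamma}L_A)^\times$, extend to $A[L]$ by the universal property, check $J^r$ lands in the $r$-th divided-power filtration step, and pass to the completion. You merely spell out details the paper leaves implicit (the binomial identity for divided powers and $I^{[m]}I^{[n]}\subseteq I^{[m+n]}$), so there is nothing to change.
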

\begin{proof}
Consider the group homomorphism 
$L\to (\widehat{\Gamma}L_A)^\times$ given by 
$\ell\mapsto \sum_{k\ge 0}\ell^{[k]}$.
This induces an $A$-algebra homomorphism $A[L]\to \widehat{\Gamma}L_A$ which maps $(\delta_\ell-1)^r$ into $(\widehat{\Gamma}_+L_A)^{[r]}$
and hence
 $J^r$ to $(\widehat{\Gamma}_+L_A)^{[r]}$. This induces 
the desired $A$-algebra homomorphism
$\exp^*:R\to \widehat{\Gamma}L_A$.
\end{proof}

\begin{remark}\label{rem:mom}
The map $\exp^*$ is induced from the exponential map
of the formal group $\Spf R$. For this one should think of
$\Spf \widehat{\Gamma}L_A$ as the divided power formal neighbourhood
of $0$ in the Lie algebra $L_A^*$ of $\Spf R$. The homomorphism
$\exp^*$ has also the following description.
 Let
$\bbH:=\varinjlim_r\Hom_{A}(R/I^r, A)$
be the bigebra of translation invariant differential operators
on $\Spf R$. Then one has $R\isom \Hom_A(\bbH,A)$ and
one has a map $\cU(L_A^*)\to \bbH$ of the universal enveloping 
algebra of the Lie algebra $L_A^*$ to $\bbH$. If we observe that 
$\cU(L_A^\vee)\isom \Sym L_A^*$ we get an $A$-algebra homomorphism 
\[
R\isom \Hom_A(\bbH,A)\to \Hom_A(\cU(L_A^\vee),A)\isom \widehat{\Gamma} L_A
\]
which coincides with $\exp^*$. 
\end{remark}
\begin{proposition} 
If $A$ is a $\QQ$-algebra, then 
\[
\exp^*:R\to \widehat{\Gamma}L_A
\]
is an isomorphism.
\end{proposition}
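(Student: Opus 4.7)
The plan is to reduce the statement to the graded level. Both $R$ and $\widehat\Gamma L_A$ are complete and separated with respect to their filtrations---by $I^r$ in $R$ and by the divided power ideals $I^{[r]}$ in $\widehat\Gamma L_A$---and by the construction in Lemma \ref{lemma:exponential-map} the map $\exp^*$ is compatible with these filtrations. It therefore suffices to show that the induced map on associated graded rings is an isomorphism.

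The first step is to identify both associated graded rings. By Lemma \ref{lemma:gr-hom} there is a canonical isomorphism $\gr_I^\cdot R \isom \Sym^\cdot L_A$ sending $\ell \in L_A$ to the class of $\delta_\ell - 1$. On the other side, since $I = \Gamma_+ L_A$ is the graded augmentation ideal and every monomial $\ell^{[k]}$ with $k \geq r$ lies in $I^{[r]}$, one checks that $I^{[r]} = \bigoplus_{k \geq r}\Gamma_k L_A$, giving $\gr^\cdot \widehat\Gamma L_A \isom \Gamma L_A$ with $\Gamma_k L_A$ in degree $k$. Next I would verify that $\gr \exp^* \colon \Sym L_A \to \Gamma L_A$ coincides with the canonical $A$-algebra homomorphism \eqref{eq:sym-hom}. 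Since both are algebra maps, it suffices to compare them in degree one, where $\exp^*(\delta_\ell - 1) = \sum_{k \geq 1}\ell^{[k]} \equiv \ell^{[1]} = \ell \pmod{I^{[2]}}$, so both send $\ell$ to $\ell \in \Gamma_1 L_A$.

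Having identified $\gr \exp^*$ with the map \eqref{eq:sym-hom}, the conclusion is immediate from the explicit formula $\ell_1^{k_1}\cdots \ell_n^{k_n} \mapsto k_1!\cdots k_n!\,\ell_1^{[k_1]}\cdots \ell_n^{[k_n]}$ already recorded in the excerpt: in a $\QQ$-algebra every $k!$ is invertible, so the map is bijective on each graded piece. I do not anticipate a substantive obstacle; essentially all the nontrivial content was absorbed into Lemmas \ref{lemma:gr-hom} and \ref{lemma:exponential-map}, and what remains is the pleasant observation that $\exp^*$ is, by design, the identity on $L_A$ at the associated graded level, so the proposition reduces to a known calculation about the natural map from symmetric to divided power algebras over a $\QQ$-algebra.
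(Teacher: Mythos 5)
Your proof is correct and follows essentially the same route as the paper: reduce to the associated graded via completeness and separatedness, check on the degree-one generators $\delta_\ell-1\mapsto\sum_{k\ge1}\ell^{[k]}\equiv\ell^{[1]}$ that $\gr\exp^*$ is the canonical map $\Sym L_A\to\Gamma L_A$, and conclude from the invertibility of factorials over a $\QQ$-algebra. No gaps.
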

\begin{proof}

Identify $\gr_IR\isom \Sym I/I^2\isom\Sym L_A$. Then we claim that
the associated 
graded of $\exp^*:R\to \widehat{\Gamma}L_A$
\[
\gr_I\exp^*:\Sym L_A\to \Gamma L_A
\]
coincides with the canonical map. But 
$L_A\isom I/I^2$ is generated by $\delta_\ell-1$ which maps to
$(\sum_{k \ge 0}\ell^{[k]})-1$. This is congruent to 
$\ell^{[1]}$ modulo $({\Gamma}_+L_A)^{[2]}$. Thus 
$\gr_I\exp^*$ must be induced from the isomorphism $L_A\isom \Gamma_1L_A$ by the universal property of $\Sym L_A$.
In particular, if $A$ is a $\QQ$-algebra then
$\gr_I\exp^*$ is an isomorphism. As $R$ and $\widehat{\Gamma}L_A$
are complete and separated, this implies that $\exp^*$ is an isomorphism.
\end{proof}
Usually it is more convenient to work with the power
series ring $\widehat{\Sym}L_A$ than with $\widehat{\Gamma}L_A$.
\begin{corollary}
Let $A$ be a $\QQ$-algebra, then the isomorphism 
\[
\exp^*:R\isom \widehat{\Gamma}L_A\isom \widehat{\Sym} L_A
\]
is induced by the group homomorphism $\exp:L\to \widehat{\Sym} L_A$
which maps $\ell\mapsto\sum_{k\ge 0}\frac{\ell^{\otimes k}}{k!}$.
\end{corollary}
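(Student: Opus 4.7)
The plan is to trace the composite isomorphism
\[
R \xrightarrow{\ \exp^*\ } \widehat{\Gamma}L_A \xrightarrow{\ \sim\ } \widehat{\Sym}L_A
\]
on the distinguished generators $\delta_\ell$ of $R$. The preceding proposition gives that the first arrow is an isomorphism over a $\QQ$-algebra, and the remark following \eqref{eq:sym-hom} observes that the second arrow is too. Since both $R$ and $\widehat{\Sym}L_A$ are complete and separated for their augmentation filtrations, and the topological image of $L$ under $\delta$ generates $R$ as a complete $A$-algebra, any $A$-algebra homomorphism out of $R$ is determined by its values on the $\delta_\ell$; so it suffices to compute where each of them is sent.

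By Lemma \ref{lemma:exponential-map}, the first arrow sends $\delta_\ell$ to $\sum_{k\ge 0}\ell^{[k]}$. For the second arrow, which is the inverse of \eqref{eq:sym-hom}, I would read off from the explicit formula $\ell_1^{k_1}\cdots\ell_n^{k_n}\mapsto k_1!\cdots k_n!\,\ell_1^{[k_1]}\cdots\ell_n^{[k_n]}$ that $\ell^{[k]}$ corresponds to $\ell^{\otimes k}/k!$ for every $\ell\in L_A$ (first on basis vectors, and then in general using the divided power identity $\ell^n = n!\,\ell^{[n]}$, so the correspondence $\ell^{[k]} \leftrightarrow \ell^k/k!$ is basis-independent). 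Summing up, the composite isomorphism sends $\delta_\ell$ to $\sum_{k\ge 0}\ell^{\otimes k}/k! = \exp(\ell)$.

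This is exactly the claim: the composite is the unique $A$-algebra extension of the group homomorphism $\exp:L\to(\widehat{\Sym}L_A)^\times$, $\ell\mapsto\sum_{k\ge 0}\ell^{\otimes k}/k!$. The one small thing to verify along the way is that $\exp$ really takes values in the units and is multiplicative, but this is the usual formal identity $\exp(\ell)\exp(\ell') = \exp(\ell+\ell')$ in $\widehat{\Sym}L_A$, which follows from the binomial expansion $(\ell+\ell')^{\otimes k}/k! = \sum_{m=0}^k (\ell^{\otimes m}/m!)(\ell'^{\otimes(k-m)}/(k-m)!)$. There is no substantive obstacle: the corollary is purely a translation between the two models, $\widehat{\Gamma}L_A$ and $\widehat{\Sym}L_A$, of the divided power completion.
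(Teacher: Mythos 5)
Your argument is correct and is essentially the paper's own: the paper's proof consists precisely of the observation that $\frac{\ell^{\otimes k}}{k!}\mapsto \ell^{[k]}$ under $\widehat{\Sym}L_A\isom\widehat{\Gamma}L_A$, which combined with Lemma \ref{lemma:exponential-map} (sending $\delta_\ell\mapsto\sum_{k\ge 0}\ell^{[k]}$) gives exactly your computation $\delta_\ell\mapsto\sum_{k\ge 0}\ell^{\otimes k}/k!$. Your additional checks (determination of the map by its values on the $\delta_\ell$ via completeness, multiplicativity of $\exp$) are routine elaborations of the same route, not a different one.
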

\begin{proof}
This is clear as $\frac{\ell^{\otimes k}}{k!}\mapsto \ell^{[k]}$
under $\widehat{\Sym} L_A\isom \widehat{\Gamma} L_A$.
\end{proof}
\subsection{Iwasawa algebras of lattices}
This section is not needed for the construction of the
topological polylogarithm, but it is needed later in the construction
of the $p$-adic measures.

Fix a prime number $p$. 
In this section $A$ will be a  $p$-adically complete and
separated ring. 
\begin{definition}
The \emph{Iwasawa algebra} $A[[L_{\Zp}]]$ is the completed
group ring
\[
A[[L_{\Zp}]]:=\varprojlim_r A[L/p^rL]
\]
where the projective limit is taken with respect to $A[L/p^{r+1}L]\to A[L/p^rL]$.
\end{definition}
The $A$-algebra $R$ is canonically isomorphic to  the Iwasawa algebra.
\begin{proposition}\label{prop:R-as-Iwasawa}
The map $\delta:L\to R^\times$ induces a continuous $A$-algebra
isomorphism
\[
A[[L_\Zp]]\xrightarrow{\isom} R.
\] 
\end{proposition}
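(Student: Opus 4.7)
The plan is to reduce both sides to a formal power series ring over $A$ after choosing a basis of $L$. Fix a basis $\ell_1,\ldots,\ell_n$ of $L$, write $X_i:=\delta_{\ell_i}\in R^\times$ and $T_i:=X_i-1\in I$. The Remark after the definition of $R$ identifies $R$ with $A[[T_1,\ldots,T_n]]$, and the $I$-adic filtration matches the $(T_1,\ldots,T_n)$-adic one. Since each quotient $R/I^{N+1}$ is a finite free $A$-module and $A$ is $p$-adically complete and separated, one has $R=\varprojlim_{N,s}R/(I^{N+1}+p^sR)$; I take this mixed filtration as the target of the map I construct.

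\textbf{Step 1 (construction of $\varphi$).} The group homomorphism $\delta:L\to R^\times$ extends to an $A$-algebra map $A[L]\to R$. The key continuity statement is that for each pair $(N,s)$ there exists $r_0=r_0(N,s)$ such that for $r\ge r_0$ the kernel of $A[L]\to (A/p^s)[L/p^rL]$ is sent into $I^{N+1}+p^sR$. That kernel is generated by $p^s$ and by $X_i^{p^r}-1=(1+T_i)^{p^r}-1=\sum_{k=1}^{p^r}\binom{p^r}{k}T_i^k$ for $i=1,\ldots,n$. Using the classical identity $v_p\!\left(\binom{p^r}{k}\right)=r-v_p(k)$ for $1\le k\le p^r$, each summand with $1\le k\le N$ lies in $p^{r-\log_p N}R\subset p^sR$ once $r\gg 0$, while summands with $k\ge N+1$ automatically lie in $I^{N+1}$. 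Passing to the inverse limit yields the desired continuous $A$-algebra homomorphism $\varphi:A[[L_{\Zp}]]\to R$ with $\varphi(\delta_\ell)=\delta_\ell$ for $\ell\in L$.

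\textbf{Step 2 (inverse).} Set $\tau_i:=\delta_{\ell_i}-1\in A[[L_{\Zp}]]$. The elements $\tau_i$ are topologically nilpotent: in $A[L/p^rL]$ the congruence $(X_i-1)^{p^r}\equiv X_i^{p^r}-1=0\pmod{p}$ shows $\tau_i^{p^r}\in p\,A[L/p^rL]$, hence $\tau_i^{sp^r}\in p^sA[L/p^rL]$, so its image in $(A/p^s)[L/p^rL]$ vanishes. Consequently any formal power series in the $T_i$ can be evaluated at the $\tau_i$, giving a continuous $A$-algebra homomorphism $\psi:R=A[[T_1,\ldots,T_n]]\to A[[L_{\Zp}]]$ sending $T_i\mapsto\tau_i$. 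Computing $\varphi\circ\psi(T_i)=T_i$ and $\psi\circ\varphi(\delta_{\ell_i})=\delta_{\ell_i}$ on the respective topological generating sets shows that $\varphi$ and $\psi$ are mutually inverse, proving the proposition.

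The main obstacle is Step 1: making the continuity estimate quantitative, i.e.\ matching the mixed topology of $A[[L_{\Zp}]]$ (combining the pro-$p$-quotient topology of $L$ with the $p$-adic topology of $A$) to the $(I,p)$-adic topology of $R$. Once the valuation estimate for $\binom{p^r}{k}$ is in hand, everything else is essentially formal.
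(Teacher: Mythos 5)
Your argument is correct, and its first and third ingredients are essentially the paper's: your valuation estimate $v_p\bigl(\binom{p^r}{k}\bigr)=r-v_p(k)$ applied to $(1+T_i)^{p^r}-1$ is just the quantitative form of the paper's inductive claim that $\delta_{p^r\ell}-1=\delta_\ell^{p^r}-1\in(p,I)^{r+1}$, and your observation that $R=\varprojlim_{N,s}R/(I^{N+1}+p^sR)$ (via the finite free quotients $R/I^{N+1}$) plays the same role as the paper's verification that $R\isom\varprojlim_r A[L]/(p,J)^{r+1}$, the two filtrations $\{I^{N+1}+p^sR\}$ and $\{(p,I)^r\}$ being cofinal. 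Where you genuinely diverge is in proving bijectivity: you choose a basis, identify $R=A[[T_1,\ldots,T_n]]$, and construct an explicit continuous inverse $\psi$ by evaluating power series at the topologically nilpotent elements $\tau_i=\delta_{\ell_i}-1$ (nilpotent in each $(A/p^s)[L/p^rL]$ by the freshman's-dream congruence), then conclude by continuity and density on topological generators; the paper instead stays basis-free and argues that the map restricts to the identity on the common dense subring $A[L]$ and that both sides are complete and separated. Your route costs a choice of coordinates and a little bookkeeping (convergence of $\psi$, compatibility of the truncated evaluations, and note that $p^{\,r-\log_pN}$ should really be $p^{\,r-\lfloor\log_pN\rfloor}$), but it buys an explicit inverse and sidesteps the slightly delicate point in the paper's density argument, namely that ``isomorphism on a dense subring'' only yields an isomorphism of completions once one knows the map identifies the two induced topologies on $A[L]$ — which is exactly what your two-sided estimates make transparent. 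For the final step you should also observe, as is implicit in your appeal to generators, that $\psi\circ\varphi$ fixes $\delta_{-\ell_i}=\delta_{\ell_i}^{-1}$ automatically, so it fixes all of the dense image of $A[L]$, and both rings are Hausdorff for the topologies used.
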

\begin{proof}
Consider
the composition
$L\xrightarrow{\delta} A[L]^\times \to (A[L]/(p,J)^{r+1})^\times$.
By induction on $r$ one sees that $\delta_{p^r\ell}-1=
\delta_\ell^{p^r}-1\in (p,I)^{r+1}$. This implies that
this composition factors through $L/p^rL$ and one gets by the
universal property of the group ring an $A$-algebra homomorphism
$A[L/p^rL]\to A[L]/(p,J)^{r+1}$, such that the composition
$A[L]\to A[L/p^rL]\to A[L]/(p,J)^{r+1}$ is the quotient map.
This induces a continuous homomorphism
\[
A[[L_{\Zp}]]\to \varprojlim_r A[L]/(p,J)^{r+1}
\]
which is an isomorphism on the subring $A[L]$.
As $A[L]$ is a dense subring on
both sides and
both rings $A[[L_{\Zp}]]$ 
and $ \varprojlim_r A[L]/(p,J)^{r+1}$ are 
complete and separated, the homomorphism itself must be an isomorphism. It remains to show that 
\[
R\isom  \varprojlim_r A[L]/(p,J)^{r+1},
\] 
i.e., that $R$ is $(p,I)$-adically complete
and separated. As $(p,I)^{2r}\subset (p)^r+I^r\subset (p,I)^r$
the $(p,I)$-adic topology on the finitely generated 
$A$-module $R/I^r\isom A[L]/J^r$ coincides
with the $(p)$-adic one. Hence the $R/I^r$ are complete
in the $(p,I)$-adic topology, so that also $R$ is $(p,I)$-adically
complete. As $\bigcap_{r\ge 0}(p)^r=0$ and $\bigcap_{r\ge 0}I^r=0$
it is also separated.
\end{proof}
\begin{definition}\label{def:moment-map}
Let $A$ be a $p$-adically complete and separated ring. Then we
call 
\[
\mom:A[[L_\Zp]]\xrightarrow{\isom} R\xrightarrow{\exp^*}\widehat{\Gamma} L_A\isom \widehat{\TSym}L_A
\]
the \emph{moment map}. The projection onto its $k$-th component
\[
\mom^k:A[[L_\Zp]]\xrightarrow{\isom} R\to \TSym^kL_A
\]
we call the \emph{$k$-th moment map}. 
\end{definition}
To explain the name "moment map" recall that $A[[L_A]]$ can
be interpreted as the algebra of measures on $L_{\Zp}$.
\begin{definition}
Let $\sC(L_{\Zp})$ be the continuous $A$-valued functions on
$L_{\Zp}$. An $A$-valued measure is an $A$-linear map
$\mu:\sC(L_{Zp})\to A$. We write 
\[
\Meas(L_{\Zp},A):=\Hom_A(\sC(L_{\Zp}),A)
\]
for the space of all $A$-valued measures.
\end{definition}
It is well-known that $\Meas(L_{\Zp},A)$ is a ring under
convolution of measures which is canonically isomorphic to $A[[L_{\Zp}]]$.
\begin{proposition}\label{prop:mom-as-moment-map}
Identify $\Meas(L_{\Zp},A)\isom A[[L_{\Zp}]]$ and
let 
\[
\Meas(L_{\Zp},A)\isom R\xrightarrow{\mom^k}{\TSym}^kL_{\Zp}
\]
be the composition of the isomorphism in Proposition \ref{prop:R-as-Iwasawa} with the $k$-th moment map. 
If we interpret the $A$-dual 
$(\TSym L_{A})^*\isom \Sym L_A^*$ as polynomial functions 
$x_1^{k_1}\cdots x_n^{k_n}$ on $L_{A}$, 
then 
\[
\mom^k(\mu)=\sum_{k_1+\ldots+k_n=k}\mu(x_1^{k_1}\cdots x_n^{k_n}) 
\ell_1^{[k_1]}\cdots \ell_n^{[k_n]}
\]
where
$\mu(x_1^{k_1}\cdots x_n^{k_n})$ are the moments of the measure
$\mu$.
\end{proposition}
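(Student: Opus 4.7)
The plan is to verify the claimed identity first on Dirac point masses $\delta_\ell$ with $\ell \in L$, and then extend by $A$-linearity and continuity, using that $A[L]$ is dense in $A[[L_{\Zp}]]$.

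First I would trace a Dirac measure through the chain of identifications. Under $\Meas(L_{\Zp}, A) \isom A[[L_{\Zp}]]$ the point mass at $\ell \in L$ corresponds to the image of $\ell$ under $L \to A[L] \to A[[L_{\Zp}]]$; Proposition \ref{prop:R-as-Iwasawa} then sends this to $\delta_\ell \in R^\times$ in the paper's notation; and the proof of Lemma \ref{lemma:exponential-map} shows
\[
\exp^*(\delta_\ell) = \sum_{j \ge 0} \ell^{[j]} \in \widehat{\Gamma} L_A.
\]
Projecting onto $\Gamma_k L_A = \TSym^k L_A$ therefore gives $\mom^k(\delta_\ell) = \ell^{[k]}$. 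Writing $\ell = \sum_i x_i(\ell)\, \ell_i$ in the chosen basis and combining the addition formula $(u+v)^{[k]} = \sum_{m=0}^k u^{[m]} v^{[k-m]}$ recorded in the paper with the scalar identity $(cu)^{[k]} = c^k u^{[k]}$, iteration yields the multinomial expansion
\[
\ell^{[k]} = \sum_{k_1 + \cdots + k_n = k} x_1(\ell)^{k_1} \cdots x_n(\ell)^{k_n} \, \ell_1^{[k_1]} \cdots \ell_n^{[k_n]}.
\]
Since the moments of the point mass are $\delta_\ell(x_1^{k_1} \cdots x_n^{k_n}) = x_1(\ell)^{k_1} \cdots x_n(\ell)^{k_n}$, the claimed formula holds for $\mu = \delta_\ell$.

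To extend to arbitrary $\mu$, I would observe that both sides of the formula are $A$-linear in $\mu$ and thus agree on the subring $A[L] \subset A[[L_{\Zp}]]$. This subring is dense: by construction $A[[L_{\Zp}]] = \varprojlim_r A[L/p^r L]$, and each quotient map $A[L] \to A[L/p^r L]$ is surjective, so the image of $A[L]$ hits every finite-level quotient. Both sides of the formula are continuous in $\mu$ — the left because $\mom^k$ is a composition of continuous maps landing in the finitely generated free $A$-module $\TSym^k L_A$, the right because $x_1^{k_1} \cdots x_n^{k_n}$ is a continuous function on $L_{\Zp}$ and a measure is by definition continuous on $\sC(L_{\Zp})$ — so the identity extends to all $\mu$.

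The one genuinely delicate step is this last compatibility: one must check that the weak topology on $\Meas(L_{\Zp}, A) = \Hom_A(\sC(L_{\Zp}), A)$ matches the inverse-limit topology on $A[[L_{\Zp}]]$ under the identification, so that density and continuity can be invoked simultaneously on both sides. For $A$ a $p$-adically complete and separated ring this is standard and follows from Mahler's theorem, which realizes $\sC(L_{\Zp})$ as the $p$-adic completion of the polynomial functions on $L_{\Zp}$; the polynomial functions $x_1^{k_1}\cdots x_n^{k_n}$ then pair with the dense subring $A[L]$ in a visibly continuous way, and the formula propagates to all of $A[[L_{\Zp}]]$.
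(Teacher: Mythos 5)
Your proposal is correct, and in fact the paper offers no proof at all here: it simply states that the proposition ``follows by a direct calculation'' and skips it, so your argument supplies exactly the calculation being alluded to. The computation on Dirac masses is right: $\delta_\ell$ corresponds to the group-like element $\delta_\ell\in R$, $\exp^*(\delta_\ell)=\sum_{j\ge 0}\ell^{[j]}$ by the construction in Lemma \ref{lemma:exponential-map}, and the divided-power identities $(u+v)^{[k]}=\sum_m u^{[m]}v^{[k-m]}$ and $(cu)^{[k]}=c^ku^{[k]}$ give the multinomial expansion matching the moments of $\delta_\ell$. The only point to tighten is your continuity claim for the right-hand side: the sentence ``a measure is by definition continuous on $\sC(L_{\Zp})$'' addresses continuity in the function argument, whereas what is needed is continuity of $\mu\mapsto\mu(x_1^{k_1}\cdots x_n^{k_n})$ in $\mu$ for the inverse-limit (equivalently, by Proposition \ref{prop:R-as-Iwasawa}, the $(p,J)$-adic) topology in which $A[L]$ is dense. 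Your final paragraph does repair this in the right spirit; the cleanest way to say it is that a polynomial function with $\Zp$-coefficients satisfies $f(\ell+p^r\ell')\equiv f(\ell)\bmod p^r$, so $\mu(f)\bmod p^r$ depends only on the image of $\mu$ in $(A/p^r)[L/p^rL]$, which is precisely the required continuity (and is also how the pairing of $A[[L_{\Zp}]]$ with continuous functions is defined in the first place). With that made explicit, the density-plus-continuity extension from $A[L]$ to all of $A[[L_{\Zp}]]$ is complete.
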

The proposition  follows by a direct calculation, as we do not
need it, we skip the proof.
\subsection{Torsors and locally constant sheaves}
We follow the principle
``right action on spaces, left action on cohomology''.

Let $G$ be a group and $\pi:X\to S$ be a right $G$-torsor. For a left $G$-module $M$ we define a $G$ action on $X\times M$
by $(x,m)g:=xg,g^{-1}m)$ and write as usual
\[
X\times^GM:=X\times M/G
\]
for the orbits of $G$ on  $X\times M$.
\begin{definition}
For a left $G$-module $M$, we define the locally constant sheaf
$\widetilde{M}$ to be the sheaf of sections of $X\times^GM$ over $S$ (where $M$ has the discrete topology). 
If the $L$-action is trivial then $\widetilde{M}$ is 
the constant sheaf 
$\underline{M}$.
\end{definition} 
The sections over $U\subset S$ open of the sheaf
$\widetilde{M}$ are explicitly given by 
\begin{equation}\label{eq:ass-sheaf}
\widetilde{M}(U)=\{f:\pi^{-1}(U)\to M\mid f(ug)= g^{-1} f(u)\mbox{ for all }g\in G, u\in \pi^{-1}(U)\}.
\end{equation}
If $X$ is simply connected, then the functor 
\begin{align}
\begin{split}
\{G-\mbox{modules}\}&\to \{\mbox{locally constant sheaves on }S\}\\
M&\mapsto \widetilde{M}
\end{split}
\end{align}
is an equivalence of categories. The inverse functor is 
$\sF\mapsto \Gamma(X,\pi^*\sF)$.
%
We apply this in the case of lattices.
\begin{definition}
Let $L$ be  a lattice. We write $V:=\RR\otimes L$ where
$\ell\in L$ acts from the right on $V$ by $v\mapsto v+\ell$.
We denote by 
\[
T:=T(L):=V/L
\]
the associated compact real torus.
\end{definition}
Over $T$
we have the fundamental $L$-torsor $V$
\begin{equation}\label{eq:fundamentaltorsor}
0\to L\to V\xrightarrow{\pi} T\to 0
\end{equation}
with $\pi^{-1}(0)=L$. 
\begin{definition}\label{def:torsor} Let $R^\times_1:=(1+I)^\times\subset R^\times$ be the subgroup of 
$1$ units. 
The $R^\times_1$-torsor $\Log^\times$
on $T$ is the 
push-out of the sequence \eqref{eq:fundamentaltorsor} with
$\delta:L\to R^\times_1$, so that one has an exact sequence of abelian groups
\begin{equation}
0\to R^\times_1 \to \Log^\times\xrightarrow{\pr_1} T\to 0.
\end{equation}
\end{definition}
Note that we  also have $\Log^\times :=V\times^LR^\times_1$.
The $R^\times_1$-torsor $\Log^\times$ is obviously 
rigidified over $0\in T$ by $1\in R_1^\times$. By  \cite[Expose VII, Proposition 1.3.5]{SGA7I} the group structure on
$\Log^\times$ can be uniquely recovered from its $R_1^\times$-torsor
structure together with its rigidification $1$ of its fibre 
$\Log_0^\times$ in $0\in T$. 

%
\subsection{The logarithm sheaf}
%
We will consider local systems on
the compact torus
\[
T:=T(L):=V/L.
\]
\begin{proposition}
There exists a local system $\sLog=\sLog_T$ on  $T$ of free
rank one $\underline{R}$-modules,  such that the monodromy action
 $L$-action
$L\to \Aut(0^*\sLog)= R^\times$ coincides with $\delta:L\to R^\times$. Let $\bfone\in 0^*\sLog$ be a
generator, then the pair $(\sLog,\bfone)$ is unique up to unique isomorphism.
\end{proposition}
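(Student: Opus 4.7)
The plan is to construct $\sLog$ by applying the equivalence of categories between left $L$-modules and locally constant sheaves on $T$ recalled just before the proposition; this equivalence applies because $V$ is simply connected and $\pi: V \to T$ is an $L$-torsor. Concretely, I would set $\sLog := \widetilde{R}$, where $R$ is viewed as a left $L$-module via $\delta: L \to R^\times$ acting by multiplication.

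Two consistency checks are then needed. First, $\sLog$ should inherit a well-defined $\underline{R}$-module structure: since $R$ is commutative, left multiplication by $\delta_\ell$ commutes with right multiplication by any $s \in R$, so the $R$-module action on the second factor of $V \times R$ descends to the quotient $V \times^L R$. Each stalk is then free of rank one over $R$, making $\sLog$ a local system of free rank-one $\underline{R}$-modules as required. Second, for the monodromy, using the explicit description \eqref{eq:ass-sheaf} the stalk $0^*\sLog$ is identified with $\{f: L \to R \mid f(\ell) = \delta_\ell^{-1} f(0)\}$, canonically isomorphic to $R$ via $f \mapsto f(0)$, and under this identification the monodromy action of $\ell \in L$ on $0^*\sLog$ is precisely $\delta_\ell$.

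For uniqueness, given any other pair $(\sLog', \bfone')$ with the same property, I would invoke the inverse functor $\sF \mapsto \Gamma(V, \pi^* \sF)$, which identifies both $\sLog$ and $\sLog'$ with free rank-one $R$-modules carrying the same $L$-action $\delta$. Any $L$-equivariant $R$-module isomorphism between two such modules is determined by where a single generator is sent, so imposing $\bfone \mapsto \bfone'$ pins it down uniquely, yielding a unique isomorphism $\sLog \isom \sLog'$ sending $\bfone$ to $\bfone'$.

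I do not expect a serious obstacle here: everything is a direct application of the equivalence of categories from the previous subsection, and the one subtle point — compatibility of the $L$-action and the $\underline{R}$-module structure on $\sLog$ — reduces immediately to commutativity of $R$.
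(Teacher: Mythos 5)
Your proposal is correct and is essentially the paper's own argument: the paper's first existence construction is exactly $\sLog:=\widetilde{R}$ with $R$ viewed as $L$-module via $\delta$ and generator $\bfone=1$, and its uniqueness proof likewise reduces, via the equivalence of categories coming from the simply connected torsor $V\to T$, to the unique $L$-equivariant identification of the fibres at $0$ sending $\bfone$ to the other generator. (The paper additionally records a second construction, $\sLog=\underline{R}\otimes_{\underline{A[L]}}\pi_!\underline{A}$, which it later uses to compute cohomology, but this is not needed for the proposition itself.)
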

\begin{proof}
Uniqueness: Let $(\sL,s)$ be another pair with the properties of $\sLog$. Then there exists a unique 
$L$-equivariant isomorphism
$\alpha:0^*\sLog\isom 0^*\sL$ with $\alpha(\bfone)=s$. 
Hence there is a unique isomorphism of local systems $\sLog\isom \sL$. 

Existence: We give two constructions. For the first consider $R$ as $L$-module via $\delta:L\to R^\times$ and
define
$\sLog:=\widetilde{R}$.
As generator $\bfone \in 0^*\sLog=R$ we choose the element $1\in R$.

For the second let $\pi_!\underline{A}$ 
be the direct image with compact supports of
the constant sheaf $\underline{A}$ on $V$. The sheaf $\pi_!\underline{A}$ is a local system of $\underline{A[L]}$-modules of rank one 
and $0^*\pi_!\underline{A}=A[L]$ has $1\in A[L]$ as generator. Hence we can take
\begin{equation}
\sLog:=\underline{R}\otimes_{\underline{A[L]}}\pi_!\underline{A}
\end{equation}
with the induced generator $\bfone\in 0^*\sLog$.
\end{proof}

\begin{definition}
We call $(\sLog,\bfone)$ the \emph{logarithm sheaf} and we 
let 
\begin{equation*}\label{def:Log-def}
\Log:=V\times^LR
\end{equation*}
so that $\sLog$ is the sheaf of sections of $\Log$.
\end{definition}
\begin{proposition} \label{prop:log-properties}
The logarithm sheaf $(\sLog,\bfone)$  has the following properties.
\begin{enumerate}
\item Consider the filtration  $I^k\sLog:=\widetilde{I^k}$ on $\sLog$.
Then there is a unique identification of local systems of 
$\gr_I^\cdot R=\Sym L_A$ modules  
\begin{equation*}\label{eq:gr-log}
\gr_I^\cdot\sLog\isom {\Sym}^\cdot \underline{L_A}
\end{equation*}
that maps $\bfone\bmod I\sLog$ to $1 \in \Sym^0 L_A=A$.
\item Let $\varphi:L\to L'$ be a homomorphism of lattices
and $\varphi:T\to T'$ be the induced map, then one has an
homomorphism of local systems
\[
\varphi_{\sLog}:\sLog_{T}\to \varphi^*\sLog_{T'},
\]
which is compatible with the filtrations and respects the generators $\bfone,\bfone'$.
\item If $\varphi:L\to L'$ is an isogeny and $\deg\varphi$ invertible in $A$, then 
\[
\varphi_{\sLog}:\sLog_{T}\to \varphi^*\sLog_{T'},
\]
is an isomorphism.
\item Let $+:T\times T\to T$ be the group structure on the torus, then
one has a unique isomorphism
\[
\pr_1^*\sLog\otimes_{\underline{R}}pr_2^*\sLog\isom +^*\sLog ,
\]
under which $\bfone\otimes \bfone\mapsto \bfone$, i.e, $\sLog$ is a \emph{character sheaf}.
\item Consider the $R_1^\times$-torsor of local sections of $\sLog$ that
are modulo $I\sLog$ equal to $1\in \underline{A}$.
Then there is a canonical isomorphism of this $R_1^\times$-torsor  
with $\Log^\times$ such that $\bfone\mapsto 1$. Under this isomorphism
the group structure on $\Log^\times$ is given by the product induced
by the isomorphism in (4). 
\end{enumerate}
\end{proposition}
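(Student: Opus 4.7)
The plan is to exploit the construction $\sLog = \widetilde{R}$ and the uniqueness of rank-one local systems with prescribed monodromy, reducing each of the five claims to a statement about the module $R$. For (1) the filtration $I^k\sLog$ is the sheafification of the $L$-stable filtration $I^k \subset R$, so taking associated graded commutes with $\widetilde{(\cdot)}$; combined with Lemma \ref{lemma:gr-hom} this yields the isomorphism with $\Sym^\cdot\underline{L_A}$ (trivial $L$-action gives a constant sheaf), and the generator $\bfone$ pins the identification down. For (2) the ring map $\varphi_R : R\to R'$ is $L$-equivariant when $R'$ is viewed as an $L$-module via $\varphi$, so applying $\widetilde{(\cdot)}$ produces the desired map of local systems on $T$, noting that $\varphi^*\sLog_{T'}$ is by construction the sheaf associated to $R'$ with this $\varphi$-twisted $L$-action; filtration compatibility and $\bfone\mapsto\bfone'$ are built in. Claim (3) then follows by applying Proposition \ref{prop:gr-hom}.

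The technically interesting step is (4). Both sides are rank-one local systems of $\underline{R}$-modules on $T\times T$ with distinguished generators of the fibre at $(0,0)$, namely $\bfone\otimes\bfone$ and $\bfone$. Since $V\times V$ is simply connected, the uniqueness part of the existence proof of $\sLog$ applies on $T\times T$ with lattice $L\times L$: it suffices to check that the two monodromy representations of $L\times L$ on $R$ agree. The monodromy of $\pr_1^*\sLog \otimes_{\underline R}\pr_2^*\sLog$ at $(\ell,\ell')$ is the multiplication by $\delta_\ell\cdot\delta_{\ell'}$, while that of $+^*\sLog$ is $\delta_{\ell+\ell'}$ (pulling back along $+: L\times L\to L$). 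Equality holds because $\delta:L\to R^\times$ is a group homomorphism, so a unique isomorphism compatible with the chosen generators exists.

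For (5), the sheaf of local sections of $\sLog$ reducing to $1$ modulo $I\sLog$ is visibly an $R_1^\times$-torsor on $T$ rigidified at $0$ by $\bfone$. Pulling back along $\pi:V\to T$ trivialises $\sLog$ as the constant sheaf $\underline R$, and such sections correspond to maps $V\to R_1^\times$ equivariant for the $\delta$-twisted $L$-action; this is tautologically $V\times^L R_1^\times = \Log^\times$, with $\bfone\leftrightarrow 1$. Compatibility of the group structure with the tensor product of (4) then follows from the uniqueness of the group structure on a rigidified torsor cited in Definition \ref{def:torsor}: the product of two sections congruent to $1$ modulo $I$ is again congruent to $1$, so (4) restricts to an $R_1^\times$-torsor map $\Log^\times\times\Log^\times\to +^*\Log^\times$ rigidified by $1\otimes 1\mapsto 1$, and by uniqueness this \emph{is} the group law. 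The main obstacle is (4), where one must keep careful track of which $L$-factor acts on which tensor slot; the rest is a transparent application of functoriality of $\widetilde{(\cdot)}$ and the uniqueness characterisation of $\sLog$.
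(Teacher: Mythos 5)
Your proposal is correct and follows essentially the same route as the paper: reduce everything to the module $R$ via $\sLog=\widetilde{R}$, use Lemma \ref{lemma:gr-hom} and Proposition \ref{prop:gr-hom} for (1)--(3), the uniqueness of a rank-one local system with prescribed monodromy and rigidified fibre at $0$ for (4), and the SGA7 rigidified-torsor uniqueness cited after Definition \ref{def:torsor} for (5). Your explicit monodromy check $\delta_\ell\delta_{\ell'}=\delta_{\ell+\ell'}$ in (4) just spells out what the paper leaves implicit.
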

\begin{proof} (1) follows immediately from Lemma \ref{lemma:gr-hom}
and the functoriality of the functor $M\mapsto \widetilde{M}$.
For (2) note that $ \varphi^*\sLog_{T'}$ are the sections
of $V\times^LR'$, where $L$ acts via $\varphi:L\to L'$ and
$\delta':L'\to (R')^\times$ on $R'$. Then (3) follows from
(2) and \ref{prop:gr-hom}. The assertion (4) follows from the
isomorphism $0^*(\pr_1^*\sLog\otimes_{\underline{R}}pr_2^*\sLog)\isom
0^*(+^*\sLog)$. Finally, as $\sLog=\widetilde{R}$, 
the torsor in (5) is  $\widetilde{R_1^\times} $ and there is
a unique isomorphism with $\Log^\times$ sending $\bfone$ to $1$.
From the remark after Definition \ref{def:torsor} it follows that
the group structure on $\Log^\times$ is induced by the isomorphism
in (4).
\end{proof}

%
\subsection{Trivializations of the logarithm sheaf}
%
\begin{definition}\label{def:mult-triv}
Let $H\subset T$ be a subgroup. A \emph{multiplicative trivialization } of $\sLog$ on $H$ is
a collection of generators $1_h\in \sLog_h$ for all $h\in H$ such that $1_h\bmod I\sLog_h$ equals $1\in A$ and $1_h\otimes 1_{h'}=1_{h+h'}$
under the isomorphism in Proposition \ref{prop:log-properties}
for all $h,h'\in H$.
\end{definition}
We give two alternative descriptions of a
multiplicative trivialization.
First consider the group extension
\[
0\to R^\times_1\to \Log^\times\xrightarrow{\pr_1}T\to 0
\]
from  Definition \ref{def:torsor}. 
A multiplicative trivialization is a group homomorphism $\varrho:H\to \Log^\times$
which is a section of $\pr_1$. In particular, the set of all multiplicative trivializations of $\sLog$
is a $\Hom(H,R^\times_1)$-torsor.

For a second description consider the right translation action $+:T\times H\to T$. A multiplicative trivialization is 
an extension of this $H$-action to $\Log^\times$, i.e.,
a map $\Log^\times\times H\to \Log^\times$
satisfying the usual condition for an $H$-action, 
such that one has a commutative diagram
\[
\begin{CD} \Log^\times \times H@>>> \Log^\times\\
@V\pr_1\times \id VV@VV\pr_1  V\\
T\times H@>+>> T.
\end{CD}
\] 
Given a multiplicative trivialization $\varrho:H\to \Log^\times$
the map $+:\Log^\times\times H\to \Log^\times$ is the composition of $\varrho$ with the group structure $\Log^\times \times \Log^\times\to \Log^\times$. 
\begin{definition}
Denote by $T^\tors:=L_\QQ/L\subset T$ the subgroup of torsion elements in $T$ and
by $T^{(A)}\subset T^\tors$ the subgroup of elements whose order is invertible in $A$.
%
\end{definition}

\begin{proposition}\label{prop:rho-can}
There exists a unique multiplicative trivialization $\varrho_\can$ of $\sLog$ over $T^{(A)}$. It is compatible with
isogenies and for $t\in T[N]\subset T^{(A)}$ it is explicitly given by
the isomorphism
\[
t^*\sLog\isom t^*[N]^*\sLog\isom 0^*[N]^*\sLog\isom 0^*\sLog,
\]
where the outer isomorphisms are the pull-backs of
\ref{prop:log-properties} (3) and the middle one comes
from $[N]\circ t=[N]\circ 0$. 
\end{proposition}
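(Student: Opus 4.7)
\emph{(1) Uniqueness.} Any two multiplicative trivializations of $\sLog$ over a subgroup $H$ differ by a group homomorphism $H \to R_1^\times$, so it suffices to show $\Hom(T^{(A)}, R_1^\times) = 0$. I would prove the stronger fact that $R_1^\times = 1 + I$ has no nontrivial $N$-torsion when $N$ is invertible in $A$: writing $u = 1 + x$ with $x \in I$, the equation $u^N = 1$ expands as $x \cdot (N + \binom{N}{2}x + \dots + x^{N-1}) = 0$, and the bracketed factor is a unit in $R$ (it reduces to $N \in A^\times$ modulo $I$), so $x = 0$.

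\emph{(2) Existence.} Working with the extension $0 \to R_1^\times \to \Log^\times \xrightarrow{\pr_1} T \to 0$, I would construct for each $t \in T[N] \cap T^{(A)}$ a unique $N$-torsion lift $\sigma_t \in \Log^\times$ and set $\varrho_\can(t) := \sigma_t$. This requires the $N$-th power map on $R_1^\times$ to be surjective for $N$ invertible in $A$, which follows by Newton iteration: start from $v_0 = 1$ and set $v_{k+1} := v_k(1 - \epsilon_k/N)$ with $\epsilon_k := v_k^N u^{-1} - 1 \in I^{2^k}$; the next error $\epsilon_{k+1}$ lies in $I^{2^{k+1}}$ since $(1 - \epsilon_k/N)^N \equiv 1 - \epsilon_k$ modulo $\epsilon_k^2$, so $(v_k)$ converges in the complete ring $R$ to an $N$-th root of $u$. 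Given any set-theoretic lift $\tilde t \in \Log^\times$ of $t$, the element $\tilde t^N \in R_1^\times$ has a unique $N$-th root, and dividing $\tilde t$ by it yields $\sigma_t$. Multiplicativity $\sigma_t \sigma_{t'} = \sigma_{t+t'}$ is then immediate from uniqueness of torsion lifts (both sides are killed by any common multiple of the orders of $t$ and $t'$).

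\emph{(3) Isogeny-compatibility and explicit formula.} Compatibility with isogenies is formal: for $\varphi \colon L \to L'$ of degree invertible in $A$, both $\varphi_{\sLog} \circ \varrho_\can$ and $\varrho_\can' \circ \varphi$ are multiplicative trivializations on $\varphi(T^{(A)})$, hence coincide by (1). For the explicit formula I would first observe that the composition in the statement is an $R$-linear isomorphism (the two outer arrows are $[N]_R$-semilinear by Proposition \ref{prop:log-properties}(3), and the twists cancel) inducing the identity on $\gr^0$ by Proposition \ref{prop:log-properties}(1), so its preimage $1_t$ of $\bfone$ lies in $\Log^\times_t$. To identify $1_t = \sigma_t$ it suffices to check $(1_t)^N = \bfone$, which can be done directly in the model $\Log^\times = V \times^L R_1^\times$: if $\tilde t \in V$ lifts $t$, then $1_t$ is represented by $[N]_R^{-1}(\delta_{-N\tilde t}) \in R_1^\times$, whence $(1_t)^N$ is represented by $\delta_{N\tilde t} \cdot [N]_R^{-1}(\delta_{-N\tilde t})^N = \delta_{N\tilde t} \cdot [N]_R^{-1}(\delta_{-N^2\tilde t}) = \delta_{N\tilde t} \cdot \delta_{-N\tilde t} = 1$, using that $[N]_R^{-1}$ is a ring homomorphism. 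The main obstacle is precisely this last verification, since one must distinguish the $[N]_R$-semilinear map $[N]_{\sLog}$ from the (non-$R$-linear) character-sheaf $N$-th power map, and carefully track the $R$-module twist through the semilinear composition.
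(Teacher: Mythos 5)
Your proof is correct and follows essentially the same route as the paper: uniqueness via $\Hom(T[N],R_1^\times)=0$ (you show $R_1^\times$ has no $N$-torsion by a direct unit-factor computation, where the paper uses the filtration with graded pieces $\Sym^{>0}L_A$), and existence via the unique $N$-torsion lifts, i.e.\ the inverse of $\Log^\times[N]\isom T[N]$. The additional details you supply --- the $N$-divisibility of $R_1^\times$ by Newton iteration, the formal derivation of isogeny compatibility from uniqueness, and the semilinear bookkeeping showing that the displayed composite produces an $N$-torsion element in the model $V\times^L R_1^\times$ --- are points the paper treats as immediate from the construction, and your verifications of them are accurate.
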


\begin{proof}
Uniqueness: Let $N$ be an integer which is invertible in $A$. 
It suffices to show that $\varrho_\can$ is uniquely determined
on the $N$-torsion points $T[N]$. But the multiplicative
trivializations on $T[N]$ form an $\Hom(T[N],R^\times_1)$-torsor.
But $R^\times_1$ has a filtration
by $(1+I^r)^\times$ such that $\gr^{>0} R_1^\times\isom \Sym^{>0}L_A$,
which has no $N$-torsion as $N$ is invertible in $A$.
This implies
that  $\Hom(T[N],R^\times)=0$.

Existence: Let $\varrho\mid_{T[N]}$ be the inverse of $\Log^\times[N]\isom T[N]$. By construction
these isomorphisms are compatible for different $N$.  
\end{proof}
%
\subsection{Cohomology of the logarithm sheaf}
%
 All unlabelled tensor products in this section and the following
 ones are taken over $\ZZ$.
 
Let $L$ be a lattice of rank $n$. Recall that one has a canonical isomorphism of algebras
$H_\cdot(L,\ZZ)\isom \Lambda^\cdot L$. We define
\begin{equation}
\lambda:=\lambda(L):={\Lambda}^n L=H_n(L,\ZZ).
\end{equation}

\begin{theorem}\label{thm:coh} Let $L$ be lattice of rank $n$.
One has 
\[
H^i(T,\sLog)\isom 
\begin{cases}
0&\mbox{ for }i\neq n\\
H^n(T,\underline{A})&\mbox{ for }i=n
\end{cases}
\]
induced by the map $\sLog\to \sLog/I\sLog = \underline{A}$.
In particular, the cap-product induces an isomorphism
\[
H^n(T,\sLog\otimes \lambda)\isom A.
\] 
\end{theorem}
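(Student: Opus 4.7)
The plan is to exploit that the universal cover $\pi : V \to T$ is contractible ($V \cong \RR^n$), so $T$ is a $K(L,1)$. For any local system $\sF$ on $T$ with monodromy $L \to \Aut(\sF_0)$, this yields $H^*(T, \sF) \cong H^*(L, \sF_0)$; applied to $\sF = \sLog = \widetilde{R}$ (with $L$ acting on $R$ via $\delta$) the problem reduces to computing the group cohomology $H^*(L, R)$.

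Fix a basis $\ell_1, \ldots, \ell_n$ of $L$. By the remark after Lemma \ref{lemma:gr-hom}, $R$ is then the formal power series ring $A[[x_1, \ldots, x_n]]$ with $x_i := \delta_{\ell_i} - 1$. The group cohomology $H^*(L, M)$ of any $L$-module $M$ is computed by the cochain complex
\[
K^\cdot := M \otimes \Lambda^\cdot L^*, \qquad d(m \otimes \omega) = \sum_{i=1}^n (\delta_{\ell_i} - 1) m \otimes \ell_i^* \wedge \omega,
\]
e.g.\ via the Koszul resolution of $A$ over $A[L]$, or equivalently by Künneth applied to the two-term complexes $M \xrightarrow{\delta_{\ell_i} - 1} M$. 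For $M = R$ the differential acts as multiplication by the sequence $(x_1, \ldots, x_n)$ on $R$; since $R$ is a power series ring in these variables, $(x_1, \ldots, x_n)$ is a regular sequence in $R$, so $K^\cdot$ is acyclic in degrees $< n$, and $H^n(K^\cdot) = R/IR \otimes \Lambda^n L^* = A \otimes \Lambda^n L^*$.

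For the stated identification, the reduction $\sLog \to \sLog/I\sLog = \underline{A}$ corresponds to the $L$-equivariant quotient $R \to A$, and induces termwise the quotient of Koszul complexes $K^\cdot \to A \otimes \Lambda^\cdot L^*$; the target has vanishing differentials (as $L$ acts trivially on $A$) and directly computes $H^*(T, \underline{A}) = A \otimes \Lambda^\cdot L^*$. In top degree the induced map is the identity on $A \otimes \Lambda^n L^*$, giving the desired isomorphism $H^n(T, \sLog) \xrightarrow{\sim} H^n(T, \underline{A})$.

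Finally, $\lambda = \Lambda^n L = H_n(T, \ZZ)$ is the fundamental class of $T$, so
\[
H^n(T, \sLog \otimes \lambda) \cong H^n(T, \sLog) \otimes \lambda \cong A \otimes \Lambda^n L^* \otimes \Lambda^n L \cong A
\]
via the canonical perfect pairing $\Lambda^n L^* \otimes \Lambda^n L \to \ZZ$; this evaluation is precisely cap-product with the fundamental class of $T$. The one nontrivial technical input is the regularity of $(\delta_{\ell_i} - 1)$ in $R$, which is immediate from the power-series description of $R$; this is where the vanishing of low cohomology originates and is the main (very minor) obstacle.
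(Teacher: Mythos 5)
Your proof is correct, but it takes a different route from the paper. The paper uses the second construction of the logarithm sheaf, $\sLog=\underline{R}\otimes_{\underline{A[L]}}\pi_!\underline{A}$, asserts that $R$ is $A[L]$-flat to pull the completion out of cohomology, and then quotes $H^i(T,\pi_!\underline{A})\isom H^i_c(V,\underline{A})$, which is $A$ in degree $n$ and zero otherwise; the degree-$n$ identification is then obtained by noting that the map to $H^n(T,\underline{A})$ is surjective between two groups each isomorphic to $A$. You instead use the first construction $\sLog=\widetilde{R}$, the fact that $T$ is a $K(L,1)$, and compute $H^*(L,R)$ by the Koszul complex on $(\delta_{\ell_1}-1,\ldots,\delta_{\ell_n}-1)$, which is a regular sequence since $R\isom A[[\delta_{\ell_1}-1,\ldots,\delta_{\ell_n}-1]]$ (the remark after Lemma \ref{lemma:gr-hom}). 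The two arguments rest on the same underlying algebra --- the paper's $H^*_c(V,\underline{A})$ computation is the Koszul computation of $H^*(L,A[L])$ in disguise --- but yours makes it explicit, with two small payoffs: you get the induced map $H^n(T,\sLog)\to H^n(T,\underline{A})$ identified concretely as the identity on $A\otimes\Lambda^nL^*$ (rather than by the surjectivity argument), and you bypass the flatness of $R$ over $A[L]$, needing only regularity of the augmentation variables in the power series ring, which holds over an arbitrary coefficient ring $A$. The paper's version is shorter given its second construction of $\sLog$; yours is more self-contained and also transparently compatible with the cap-product statement, which you conclude, as the paper does, from $\lambda=H_n(L,\ZZ)$ and the evaluation pairing $\Lambda^nL^*\otimes\Lambda^nL\to\ZZ$.
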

\begin{proof}
From $\sLog=\underline{R}\otimes_{\underline{A[L]}}\pi_!\underline{A}$ and because $R$ is $A[L]$-flat one gets
$H^i(T,\sLog)\isom H^i(T,\pi_!\underline{A})\otimes_{\underline{A[L]}}\underline{R}$. As
\[
H^i(T,\pi_!\underline{A})\isom H^i_c(V,\underline{A})\isom\begin{cases}
0& i\neq n\\
A& i=n
\end{cases}
\]
this implies the vanishing result. 
The homomorphism $H^n(T,\sLog)\to H^n(T,\underline{A})$
induced by $\sLog\to \underline{A}$ is surjective and because both groups are isomorphic to $A$ it
must be an isomorphism. The cap-product gives
$H^n(T,\sLog\otimes  \lambda)\isom H^n(T,\underline{A})\otimes H_n(L,\ZZ)\isom A$.  
\end{proof}

\begin{corollary}\label{cor:coh-open} Let $D\subset T$ be a finite and non-empty subset. 
Then for $i\neq n-1$
\[
H^i(T\setminus D, \sLog\otimes  \lambda)=0
\] 
and
one has a short exact sequence 
\[
0\to H^{n-1}(T\setminus D, \sLog\otimes   \lambda)\xrightarrow{\res}\sLog\mid_D\xrightarrow{\sigma_D}A\to 0,
\]
where $\sLog\mid_D=\bigoplus_{d\in D}\sLog_d$ is the restriction of $\sLog$ to $D$ and $\sigma_D$
is the sum of the maps $\sLog_d\to \sLog_d/I\sLog_d= A$. 
\end{corollary}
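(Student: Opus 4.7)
The plan is to use the localization long exact sequence for the closed inclusion $D\hookrightarrow T$ with open complement $U:=T\setminus D$, applied to the sheaf $\sF:=\sLog\otimes\lambda$:
\[
\cdots\to H^i_D(T,\sF)\to H^i(T,\sF)\to H^i(U,\sF)\to H^{i+1}_D(T,\sF)\to\cdots
\]
Two ingredients feed into this. Theorem \ref{thm:coh} computes the middle term: it vanishes outside degree $n$ and equals $A$ in degree $n$. For the left term, since $D$ is a finite disjoint union of points in the smooth orientable $n$-manifold $T$, excision reduces each $H^i_{\{d\}}(T,\sF)$ to the local cohomology of $\RR^n$ along $\{0\}$ with coefficients in the stalk $\sLog_d\otimes\lambda$; this vanishes for $i\neq n$, and in degree $n$ the orientation of $T$ at $d$ (canonically $\lambda$) cancels the $\lambda$-twist, giving $H^n_D(T,\sF)\isom\bigoplus_{d\in D}\sLog_d$.

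Plugging both into the long exact sequence, all cohomology of $U$ vanishes outside $\{n-1,n\}$, leaving
\[
0\to H^{n-1}(U,\sF)\to\bigoplus_{d\in D}\sLog_d\xrightarrow{\alpha}A\to H^n(U,\sF)\to 0.
\]
The step that is not formal is the identification $\alpha=\sigma_D$. The natural way to obtain it is by naturality in the coefficient sheaf: the morphism $\sLog\to\sLog/I\sLog=\underline{A}$ induces a morphism of the entire localization sequence, and for the constant sheaf $\underline{A}\otimes\lambda$ the analogous map $\bigoplus_{d\in D}A\to A$ has each component equal to the identity (this is the compatibility of the local and global fundamental classes via Poincar\'e--Lefschetz duality), so it is summation. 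Tracking the quotient $\sLog\to\underline{A}$ through each summand then forces the $d$-component of $\alpha$ to be the mod-$I$ reduction $\sLog_d\to A$, i.e.\ $\alpha=\sigma_D$.

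Once this is established, surjectivity of $\sigma_D$ is immediate: $D\neq\emptyset$ and any single component $\sLog_d\to A$ is already surjective, so $H^n(U,\sF)=0$ and the stated four-term exact sequence collapses to the asserted short exact sequence. The most delicate point in the argument is the orientation bookkeeping for the purity isomorphism $H^n_{\{d\}}(T,\sLog\otimes\lambda)\isom\sLog_d$ and its compatibility with the cap-product normalization in Theorem \ref{thm:coh}; one must confirm these are aligned so that no spurious sign or twist enters and $\alpha$ really equals $\sigma_D$ on the nose.
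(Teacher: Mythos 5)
Your argument is correct and follows essentially the same route as the paper: the localization sequence for $D\subset T$, excision to reduce to single points, and purity $H^n_{\{d\}}\isom\sLog_d$ (the $\lambda$-twist cancelling the local orientation), combined with Theorem \ref{thm:coh}. The only difference is that you spell out the identification of the connecting map with $\sigma_D$ via naturality along $\sLog\to\underline{A}$ and the resulting surjectivity killing $H^n(T\setminus D)$, points the paper leaves implicit.
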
 
\begin{proof}
Consider the localization sequence for the closed subset $D\subset T$
\[
\cdots\to H^i (T,\sLog\otimes   \lambda)\to H^i(T\setminus D,\sLog\otimes   \lambda)
\to H^{i+1}_D(T,\sLog\otimes   \lambda)\to \cdots
\]
For each $d\in D$ choose an open neighbourhood $U_d$ such that 
$ \sLog$ is constant on $U_d$ and the $U_d$ for 
different $d$ are disjoint. Then by excision 
\[
H^{i+1}_D(T,\sLog\otimes   \lambda)\isom
\bigoplus_{d\in D}H^{i+1}_{\{d\}}(U_d,\sLog\mid_{U_d}\otimes \lambda).
\]
As $\sLog\mid_{U_d}$ is constant and hence isomorphic to
$\sLog_{d}$, one has a canonical isomorphism
\[
H^{n}_{\{d\}}(U_d,\sLog\mid_{U_d}\otimes \lambda)\isom
\sLog_d
\]
and
$H^{i+1}_{\{d\}}(U_d,\sLog\mid_{U_d}\otimes \lambda)=0$ 
for $i+1\neq n$ (see \cite[Proposition 3.2.3]{KashiwaraSchapira}).
\end{proof}

%
\subsection{Equivariant cohomology of the logarithm sheaf}
%
We describe an equivariant version of the above construction.

Let $\Gamma\to \GL(L)$ be a group action on $L$.
We write $L\rtimes \Gamma$ for the semi-direct product
with multiplication
\[
(\ell,\gamma)(\ell',\gamma')=(\ell+\gamma\ell',\gamma\gamma').
\]
To follow our principle, we let $(l,\gamma)\in L\rtimes \Gamma$ act from the right on $v\in V$ by
$v(\ell,\gamma):=\gamma^{-1}v+\gamma^{-1}\ell$. In particular, the $L$-torsor $\pi:V\to T$ is
$\Gamma$-equivariant. 
From this we deduce a right action of $\Gamma$ on $\Log$ by
\begin{align}
\Log\times \Gamma\to \Log;&&((v,r),\gamma)\mapsto(\gamma^{-1}v,\varphi_{\gamma^{-1}}(r))
\end{align}
so that $\sLog$ is a $\Gamma$-equivariant sheaf. We want to compute the $\Gamma$-equivariant cohomology
$H^i(T,\Gamma;\sLog\otimes  \lambda)$ but for later needs, we
compute a slightly more general cohomology group. 
\begin{theorem}\label{thm:coh-equiv}
Let $D\subset T$ be a finite non-empty subset stabilized by $\Gamma$
and $M$ an $A[\Gamma]$-module. Then:
\begin{enumerate}
\item There are isomorphisms
\[
H^i(T,\Gamma;\underline{M}\otimes_{\underline{A}}\sLog\otimes  \lambda)\isom H^{i-n}(\Gamma,M)
\]
and
\[
H^i(T\setminus D,\Gamma;\underline{M}\otimes_{\underline{A}}\sLog\otimes  \lambda)\isom H^{i-n+1}(\Gamma, H^{n-1}(T\setminus D,\underline{M}\otimes_{\underline{A}}\sLog\otimes  \lambda)).
\]
\item One has a long exact sequence
\[
...\to H^i(T\setminus D,\Gamma;\underline{M}\otimes_{\underline{A}}\sLog\otimes  \lambda)\xrightarrow{\res}H^{i-n+1}(\Gamma,{M}\otimes_{{A}}\sLog\mid_D)
\xrightarrow{\sigma_D}H^{i-n+1}(\Gamma,M)\to ...
\]
\end{enumerate}
\end{theorem}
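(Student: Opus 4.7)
The plan is to apply the Leray/Hochschild--Serre spectral sequence for equivariant cohomology
\[
E_2^{p,q}=H^p(\Gamma,H^q(X,\sF))\Rightarrow H^{p+q}(X,\Gamma;\sF)
\]
with $\sF=\underline{M}\otimes_{\underline A}\sLog\otimes\lambda$ on $X=T$ and on $X=T\setminus D$, and to reduce everything to the non-equivariant statements already established in Theorem \ref{thm:coh} and Corollary \ref{cor:coh-open}. The key point is that for both choices of $X$ the non-equivariant cohomology of $\sF$ is concentrated in a single degree, so the spectral sequence collapses to an edge isomorphism.

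For the first isomorphism in (1), I would first extend Theorem \ref{thm:coh} to $M$-coefficients and show, as an $A[\Gamma]$-module, that $H^q(T,\sF)=0$ for $q\neq n$ and $H^n(T,\sF)\isom M$. The projection formula $\underline M\otimes_{\underline A}\pi_!\underline A\isom \pi_!\underline M$ lets us repeat the proof of Theorem \ref{thm:coh} verbatim with $\underline M$ in place of $\underline A$, and the resulting identification is $\Gamma$-equivariant because it is induced by the natural quotient $\sLog\to\sLog/I\sLog=\underline A$, which is manifestly $\Gamma$-equivariant. Collapse of the spectral sequence to the single row $q=n$ then yields $H^i(T,\Gamma;\sF)\isom H^{i-n}(\Gamma,M)$. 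The second isomorphism in (1) is obtained by the same argument on $X=T\setminus D$, using the $M$-twisted version of Corollary \ref{cor:coh-open} to get vanishing outside $q=n-1$; the short exact sequence in that corollary remains exact after tensoring with $M$ because $\sigma_D$ splits as a map of $A$-modules.

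For part (2), I would apply $\Gamma$-cohomology to the $M$-twisted short exact sequence of $A[\Gamma]$-modules
\[
0\to H^{n-1}(T\setminus D,\sF)\xrightarrow{\res}M\otimes_A\sLog\mid_D\xrightarrow{\sigma_D}M\to 0
\]
coming from Corollary \ref{cor:coh-open}. The resulting long exact sequence in $\Gamma$-cohomology, combined with the identification of its middle-shifted term with $H^i(T\setminus D,\Gamma;\sF)$ provided by (1), is exactly the long exact sequence claimed in the theorem. The maps $\res$ and $\sigma_D$ in the equivariant statement are then simply $\Gamma$-cohomology of their non-equivariant counterparts, and so automatically match the geometric description.

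The main technical obstacle is the $M$-coefficient, $\Gamma$-equivariant extension of Theorem \ref{thm:coh} and Corollary \ref{cor:coh-open}: one has to ensure that the cohomology identifications of those statements both upgrade to isomorphisms of $A[\Gamma]$-modules and remain valid for non-flat $M$. Once this is established, both parts of the theorem follow formally from the spectral sequence and the long exact sequence of $\Gamma$-cohomology.
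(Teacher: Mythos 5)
Your proposal is correct and follows essentially the same route as the paper: the equivariant spectral sequence $H^p(\Gamma,H^q(X,-))\Rightarrow H^{p+q}(X,\Gamma;-)$ for $X=T$ and $X=T\setminus D$, collapse via the concentration results of Theorem \ref{thm:coh} and Corollary \ref{cor:coh-open}, and the long exact sequence of $\Gamma$-cohomology applied to the residue short exact sequence. The only difference is cosmetic: where the paper simply invokes the $A[\Gamma]$-module isomorphism $H^j(X,\underline{M}\otimes_{\underline{A}}\sLog\otimes\lambda)\isom M\otimes_A H^j(X,\sLog\otimes\lambda)$, you rerun the two non-equivariant computations with $M$ built in (projection formula, $A$-linear splitting of $\sigma_D$), which makes the treatment of non-flat $M$ slightly more explicit but does not change the argument.
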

\begin{proof}
This is follows from the spectral sequence 
\[
H^i(\Gamma,H^j(X, \underline{M}\otimes_{\underline{A}}\sLog\otimes  \lambda))\Rightarrow
H^{i+j}(X,\Gamma;\underline{M}\otimes_{\underline{A}}\sLog\otimes  \lambda)
\]
for $X=T,T\setminus D$, the isomorphism 
$ H^j(X, \underline{M}\otimes_{\underline{A}}\sLog\otimes  \lambda)\isom M\otimes_A H^j(X, \sLog\otimes  \lambda)$
of $A[\Gamma]$-modules, Theorem \ref{thm:coh} and Corollary \ref{cor:coh-open}.
\end{proof}
As a special case, we get:
\begin{corollary}\label{cor:equiv-coh}
One has $H^i(T\setminus D,\Gamma;\underline{M}\otimes_{\underline{A}}\sLog\otimes  \lambda)=0$ for $i<n-1$ and a canonical isomorphism
\[
\res:H^{n-1}(T\setminus D,\Gamma;\underline{M}\otimes_{\underline{A}}\sLog\otimes  \lambda)\isom \ker(M\otimes_A\sLog\mid_D\xrightarrow{\sigma_d}M)^{\Gamma}.
\]
\end{corollary}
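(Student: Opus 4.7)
The plan is to combine Theorem \ref{thm:coh-equiv} with the description of the non-equivariant cohomology $H^{n-1}(T\setminus D,\sLog\otimes\lambda)$ given in Corollary \ref{cor:coh-open}, and then take $\Gamma$-invariants.

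First, apply part (1) of Theorem \ref{thm:coh-equiv} to obtain
\[
H^i(T\setminus D,\Gamma;\underline{M}\otimes_{\underline{A}}\sLog\otimes \lambda)\isom H^{i-n+1}(\Gamma, H^{n-1}(T\setminus D,\underline{M}\otimes_{\underline{A}}\sLog\otimes \lambda)).
\]
For $i<n-1$ the exponent $i-n+1$ is strictly negative, so group cohomology vanishes; this yields the vanishing statement. For $i=n-1$ the right-hand side is $H^0(\Gamma,-)=(-)^{\Gamma}$, so it remains to identify the module
\[
H^{n-1}(T\setminus D,\underline{M}\otimes_{\underline{A}}\sLog\otimes \lambda)
\]
as an $A[\Gamma]$-module with the kernel of $\sigma_D$.

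For this, I use the identification (already invoked in the proof of Theorem \ref{thm:coh-equiv})
\[
H^{n-1}(T\setminus D,\underline{M}\otimes_{\underline{A}}\sLog\otimes \lambda)\isom M\otimes_A H^{n-1}(T\setminus D,\sLog\otimes \lambda)
\]
together with the short exact sequence from Corollary \ref{cor:coh-open}
\[
0\to H^{n-1}(T\setminus D,\sLog\otimes \lambda)\xrightarrow{\res}\sLog\mid_D\xrightarrow{\sigma_D}A\to 0.
\]
Since $A$ is free over itself, the right-hand term $A$ has vanishing $\Tor_1^A(M,-)$, so tensoring with $M$ over $A$ preserves exactness and yields
\[
0\to M\otimes_A H^{n-1}(T\setminus D,\sLog\otimes \lambda)\to M\otimes_A\sLog\mid_D\xrightarrow{\sigma_D}M\to 0.
\]
Thus $H^{n-1}(T\setminus D,\underline{M}\otimes_{\underline{A}}\sLog\otimes \lambda)\isom \ker(M\otimes_A\sLog\mid_D\xrightarrow{\sigma_D}M)$ as $A[\Gamma]$-modules. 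Taking $\Gamma$-invariants produces the desired canonical isomorphism, and by construction the resulting map is induced by the residue map $\res$.

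There is no genuine obstacle: once Theorem \ref{thm:coh-equiv} is in hand, the corollary is a routine degree count for the Hochschild--Serre-type spectral sequence combined with a flatness check. The only point requiring attention is the tensor compatibility, which reduces to $\Tor_1^A(M,A)=0$; both the vanishing range and the explicit formula for $i=n-1$ then fall out immediately.
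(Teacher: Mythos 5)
Your argument is correct and follows essentially the same route as the paper, which obtains the corollary directly as a special case of Theorem \ref{thm:coh-equiv} (together with the exact sequence of Corollary \ref{cor:coh-open} already built into that theorem): vanishing of group cohomology in negative degrees gives the range $i<n-1$, and $H^0(\Gamma,-)$ applied to $\ker(M\otimes_A\sLog\mid_D\to M)$ gives the degree $n-1$ statement. Your explicit remark that tensoring the sequence of Corollary \ref{cor:coh-open} with $M$ stays exact because the quotient term is the free module $A$ (so $\Tor_1^A(M,A)=0$) is exactly the point implicitly used in the proof of Theorem \ref{thm:coh-equiv}, so nothing is missing.
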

%
\subsection{The topological polylogarithm and Eisenstein classes}
%
\begin{definition}
For $D\subset T$ finite and non-empty we define
\[
A[D]^0:=\ker(\bigoplus_{d\in D}A\xrightarrow{\sum}A),
\]
where $\Sigma$  is the
summation map $(a_d)_{d\in D}\mapsto\Sigma_{d\in D}a_d$. 
We view the elements $\alpha\in A[D]^0$ as functions $\alpha:D\to A$.
We also set 
\[
R[D]^0:=\ker(\bigoplus_{d\in D}R\xrightarrow{\sigma_D}A)
\]
where $\sigma_D$ is the sum of the augmentations $R\to R/IR=A$.
\end{definition}
Suppose that $D\subset T^{(A)}$ and that $\Gamma$ stabilizes
$D$. Then the trivialization $\varrho_\can$ from Proposition \ref{prop:rho-can} induces an isomorphism $R[D]^0\isom\ker(\sLog\mid_D\xrightarrow{\sigma_D}A)$, so that
we get 
\[
(A[D]^0)^\Gamma\subset (R[D]^0)^\Gamma\isom\ker(\sLog\mid_D\xrightarrow{\sigma_D}A)^\Gamma.
\] 
We apply this to Corollary \ref{cor:equiv-coh} in the case $M=A$:
\begin{definition}\label{def:pol-def}
For $D\subset T^{(A)}$, stabilized by $\Gamma$ and $\alpha\in (A[D]^0)^\Gamma$ the unique cohomology class 
\[
\pol_\alpha\in   H^{n-1}(T\setminus D,\Gamma;\sLog\otimes   \lambda)
\]
with $\res(\pol_\alpha)=\alpha$ is called the \emph{topological polylogarithm}
associated with $\alpha$.
\end{definition}
\begin{remark} Note that $(A[D]^0)^\Gamma\neq 0$ in general: 
Let $N$ be invertible in $A$ and $D=T[N]$ be the $N$-torsion points of $T$. Then $D$ is stable under
$\Gamma$ and $N^n \delta_0-\sum_{d\in T[N]}\delta_d\in (A[D]^0)^\Gamma$. 
\end{remark}
Let $t\in T\setminus D$ be any point stabilized
by $\Gamma$. Then the pull-back of $\pol_\alpha$ along $t$ is a
cohomology class
\begin{equation}
t^*\pol_\alpha\in H^{n-1}(\Gamma, \sLog_t\otimes \lambda).
\end{equation}
If $t\in T^{(A)}$, we can use  the 
trivialization $\varrho_\can$ to identify $\sLog_t\isom R$.
\begin{definition}\label{def:Eisenstein-class}
Let $D\subset T^{(A)}$ and $t\in T^{(A)}\setminus D$ be both stabilized by 
$\Gamma$, then for $\alpha\in (A[D]^0)^\Gamma$ the class 
\begin{equation*}
\Eis_\alpha(t):=t^*\pol_\alpha\in H^{n-1}(\Gamma, R\otimes \lambda).
\end{equation*}
is called the \emph{Eisenstein classes} associated to $t$
and $\alpha$.
\end{definition}
If we identify $\widehat{\Gamma}L_A\isom \widehat{\TSym}L_A$
then we also write
\begin{equation}
\Eis^k_\alpha(t):=\exp^*_k(\Eis_\alpha(t))\in H^{n-1}(\Gamma,\TSym^kL_A\otimes \lambda)
\end{equation}
for the $k$-th component of $\exp^*(\Eis_\alpha(t))$. 

The following special case of the above definition  was considered by Nori and Sczech: 
\begin{definition}\label{def:Eisenstein-special-case}
Let $D\subset T^{(A)}$ be a finite non-empty subset such that $0\notin D$. The Eisenstein operator of Nori and Sczech is the map
\begin{align*}
(A[D]^0)^\Gamma&\to H^{n-1}(\Gamma,R\otimes\lambda)\\
\alpha&\mapsto \Eis_\alpha(0).
\end{align*}
\end{definition}
As above, the $k$-th component of $\exp^*(\Eis_\alpha(0))$ gives 
rise to a map
\begin{align*}
(A[D]^0)^\Gamma\to H^{n-1}(\Gamma,\TSym^k L_A\otimes\lambda)
&&\alpha\mapsto \Eis^k_\alpha(0).
\end{align*}

%
\subsection{A variant of the polylogarithm I}
%

For the study of the general Eisenstein distribution later
the polylogarithm defined so far is not flexible enough.
In this section we discuss the required slight generalization of the polylogarithm.

Let $C\subset T$ be a finite subset then $\sLog\mid_C$ has an $A[C]$-module structure
\begin{equation}\label{eq:AC-module-str}
A[C]\otimes \sLog\mid_C\to \sLog\mid_C
\end{equation}
given on a stalk $c\in C$ by multiplication with the value $f(c)$
for $f\in A[C]$. Assume that $C\subset T^{\tors}$, 
$C\cap D=\emptyset$ and
suppose that $\Gamma$ stabilizes $C$ and $D$. Let $M=A[C]$, then
from Corollary \ref{cor:equiv-coh} we get the isomorphism
\begin{equation}\label{eq:pol-isom-var-I}
\res:H^{n-1}(T\setminus D,\Gamma;\underline{A}[C]\otimes_{\underline{A}}\sLog\otimes  \lambda)\isom \ker(A[C]\otimes_A\sLog\mid_D\xrightarrow{\sigma_d}A[C])^{\Gamma}.
\end{equation}
From the  definition of $A[D]^0$ we get 
\[
(A[C]\otimes_A A[D]^0)^\Gamma\subset 
\ker(A[C]\otimes_A\sLog\mid_D\xrightarrow{\sigma_d}A[C])^{\Gamma}.
\]
\begin{definition}\label{def:pol-def-var-I}
We define for $h\in (A[C]\otimes_A A[D]^0)^\Gamma$
the polylogarithm $\pol_h$ to be the class
\[
\pol_h\in H^{n-1}(T\setminus D,\Gamma;\underline{A}[C]\otimes_{\underline{A}}\sLog\otimes  \lambda)
\]
which corresponds to $h$ under the isomorphism
\eqref{eq:pol-isom-var-I}.
\end{definition}
The restriction of $\pol_h$ to $C$ is 
a class in $H^{n-1}(\Gamma, A[C]\otimes_A\sLog\mid_C\otimes\lambda)$
and the image under the map 
from \eqref{eq:AC-module-str} gives a class 
\begin{equation}\label{eq:Eis-f-alpha}
\Eis_h\in H^{n-1}(\Gamma,\sLog\mid_C\otimes \lambda).
\end{equation}
\begin{definition}\label{def:Eis-as-map}
For $C\subset T^\tors$ and $D\subset T^{(A)}$ with $C\cap D=\emptyset$ and such that $\Gamma$ stabilizes $C$ and  $D$, we define the map
\[
\Eis:(A[C]\otimes_A A[D]^0)^{\Gamma}\to H^{n-1}(\Gamma,\sLog\mid_C\otimes \lambda)
\]
by $h\mapsto\Eis_h$.
\end{definition}
\begin{remark}
A more intuitive way to think about $\Eis_h$ is as follows. Suppose
that $\Gamma$ stabilizes each point of $C$. Then we can
view $h\in (A[C]\otimes_A A[D]^0)^{\Gamma}$ as a map
$h:C\to (A[D]^0)^\Gamma$, $c\mapsto h_c$ with $h_c(d):=h(c,d)$.
With this notation one has
$\Eis_h=\sum_{c\in C}c^*\pol_{h_c}$, with $\pol_{h_c}$ 
as defined in \ref{def:pol-def}.
\end{remark}
%
\subsection{A variant of the polylogarithm II}
%
The polylogarithm $\pol_\alpha$ has the advantage of being defined
for arbitrary coefficients and it has good trace compatibilities 
as will be shown in the next section. The disadvantage is that it
depends on functions $\alpha$ of degree zero.
 The variant 
$\pol$ discussed below can be evaluated on each non-zero torsion point
but works only for $\QQ$-algebras $A$. It is also this version
of the polylogarithm which plays the dominant role in the literature on the motivic polylogarithm.

We specialize Corollary \ref{cor:equiv-coh} to the case
$D:=\{0\}$ and $M=L_A^*:=\Hom_A(L_A,A)$. Then we get
\begin{equation}\label{eq:pol-isom-var-II}
\res:H^{n-1}(T\setminus \{0\},\Gamma;\underline{L}_A^*\otimes_{\underline{A}}\sLog\otimes  \lambda)\isom (L_A^*\otimes_AI)^{\Gamma},
\end{equation}
where $I\subset R$ is the augmentation ideal. 
If $A$ is a $\QQ$-algebra we have an isomorphism $\exp^*:R\isom \widehat{\Sym}L_A$ and  we have
a canonical class
\begin{equation}\label{eq:varpi-def}
\varpi\in L_A^*\otimes_A L_A\subset L_A^*\otimes_A I
\end{equation}
corresponding to $\id:L_A\to L_A$. Obviously, 
$\varpi\in (L_A^*\otimes I)^\Gamma$.
\begin{definition}\label{def:pol-variant}
Let $A$ be a $\QQ$-algebra, then the polylogarithm $\pol$ is the
class
\[
\pol\in H^{n-1}(T\setminus\{0\},\Gamma;\underline{L}_A^*\otimes_A\sLog\otimes\lambda)
\]
corresponding to $\varpi$ under the isomorphism 
\eqref{eq:pol-isom-var-II}.
\end{definition}
The contraction $L_A^*\otimes_A \Sym^kL_A\to \Sym^{k-1}L_A$
induces a map 
\begin{equation}\label{eq:contr-def}
\contr:L_A^*\otimes_AR\to R.
\end{equation}
Furthermore, the multiplication $L_A\otimes_A R\to R$ 
induces 
\begin{equation}\label{eq:mult-def}
\mult: R\to L_A^*\otimes_A R
\end{equation}
and it is straightforward to show that $\contr\circ\mult=\id$.
The map $\mult$ extends to a homomorphism of sheaves
\begin{equation}
\mult:\sLog\to 
\underline{L}_A^*\otimes_A\sLog
\end{equation}
Let $t\in T^\tors\setminus\{0\}$ be
stabilized by $\Gamma$. Then $\varrho_\can$ allows us to 
identify  $t^*\sLog\isom R$.
\begin{definition}\label{def:Eisenstein-variant}
Let $t\in T^\tors\setminus\{0\}$ be
stabilized by $\Gamma$. 
The class
\[
\Eis(t):=\contr(t^*\pol)\in H^{n-1}(\Gamma,R\otimes \lambda)
\]
is called
the \emph{Eisenstein class} associated to $t$. We also write
\[
\Eis^k(t):=\exp^*_k(\Eis(t))\in  H^{n-1}(\Gamma,\Sym^kL_A \otimes\lambda).
\]
\end{definition}
Let us discuss one special case of the relation between 
$\Eis^k(t)$ and the class $\Eis_\alpha^k(t)$ defined in
\ref{def:Eisenstein-class}, which will be used later
(compare also \cite[12.4.4]{Kings-Eisenstein}).
\begin{definition}\label{def:alpha-varphi-def}
Let $\varphi:L\to L'$ be an isogeny and define the 
function on $D:=L'/\varphi(L)$
\begin{equation*}
\alpha_\varphi:=(\deg\varphi)\delta_0-\sum_{d\in D}\delta_d.
\end{equation*}
\end{definition}
Consider 
\[
\mult(\pol_{\alpha_\varphi})\in
H^{n-1}(T\setminus\varphi^{-1}(0),\Gamma;\underline{L}_A^*\otimes_A\sLog\otimes\lambda)
\]
then using the isomorphisms $\sLog\isom\varphi^*\sLog'$ and
$L_A\isom L'_A$ (because $A$ is a $\QQ$-algebra)
one also has ($D=\varphi^{-1}(0)$)
\[
\varphi^*\pol'\in H^{n-1}(T\setminus D,\Gamma;{\underline{L}}_A^*\otimes_A\sLog\otimes\lambda).
\]
Finally, $\pol\mid_{T\setminus D}$, the restriction of $\pol$ to $T\setminus D$,
gives a class in the same group.
\begin{proposition}
One has the equality
\[
\mult(\pol_{\alpha_\varphi})=
(\deg\varphi) \pol\mid_{T\setminus D}-\varphi^*\pol'.
\]
in $H^{n-1}(T\setminus D,\Gamma;{\underline{L}}_A^*\otimes_A\sLog\otimes\lambda)$.
\end{proposition}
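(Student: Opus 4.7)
The plan is to reduce the equality to a computation of residues via the isomorphism
\[
\res:H^{n-1}(T\setminus D,\Gamma;\underline{L}_A^*\otimes_A\sLog\otimes\lambda)\isom \ker(L_A^*\otimes_A\sLog\mid_D\xrightarrow{\sigma_D}L_A^*)^{\Gamma}
\]
supplied by Corollary \ref{cor:equiv-coh} with $M=L_A^*$. Because $A$ is a $\QQ$-algebra, $\deg\varphi$ is invertible, so Proposition \ref{prop:log-properties}(3) gives $\varphi^*\sLog'\isom\sLog$, and Proposition \ref{prop:rho-can} guarantees that $\varrho_\can$ is compatible with $\varphi$, i.e.\ $\varrho_\can(d)\in\sLog_d$ is sent to $\varrho_\can(\varphi(d))=\varrho_\can(0)=1$ in $\sLog'_0$. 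Trivializing $\sLog\mid_D$ via $\varrho_\can$, all three residues can be computed in a single model $\bigoplus_{d\in D}R$, and I will then check equality component-wise.

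For the left-hand side, $\mult:\sLog\to\underline{L}_A^*\otimes_A\sLog$ is a morphism of sheaves so $\res$ commutes with it. Under $\varrho_\can$ the residue of $\pol_{\alpha_\varphi}$ at $d$ is $\alpha_\varphi(d)\cdot 1\in R$, and $\mult(1)=\sum_i\ell_i^*\otimes\ell_i=\varpi$ by the very definition of $\varpi$ in \eqref{eq:varpi-def}; hence the residue of $\mult(\pol_{\alpha_\varphi})$ at $d$ equals $\alpha_\varphi(d)\,\varpi$, giving $(\deg\varphi-1)\varpi$ at $d=0$ and $-\varpi$ at every $d\neq 0$.

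For the first summand on the right, $\pol$ is defined on $T\setminus\{0\}$, so its residue on $D$ is concentrated at $0$ with value $\varpi$; multiplying by $\deg\varphi$ gives $(\deg\varphi)\varpi$ at $0$ and zero elsewhere. For $\varphi^*\pol'$ I use that $\varphi:T\to T'$ is a covering, étale at each $d\in D=\varphi^{-1}(0)$, so excision plus functoriality of the localization sequence identifies the residue of $\varphi^*\pol'$ at $d$ with the pullback of the residue $\varpi'$ of $\pol'$ at $0'\in T'$. Under the identifications $\varphi_R\colon R\isom R'$ and the induced $L_A\isom L_A'$ (hence $L_A^*\isom L_A^{'*}$), the canonical element $\varpi$ maps to $\varpi'$; combining this with the $\varphi$-compatibility of $\varrho_\can$, the residue of $\varphi^*\pol'$ at each $d\in D$ is $\varpi$. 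Thus the residue of the right-hand side at $d=0$ is $(\deg\varphi-1)\varpi$ and at $d\neq0$ is $-\varpi$, matching the left-hand side; the residue isomorphism then yields the claimed equality.

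The main technical obstacle is the bookkeeping for $\varphi^*\pol'$: one must confirm that the étale pullback, the identification $\varphi^*\sLog'\isom\sLog$, and the canonical trivializations at the points of $D$ all commute with each other, and that the universally defined class $\varpi$ is preserved under $\varphi$. Both follow formally from the uniqueness assertions in Propositions \ref{prop:log-properties} and \ref{prop:rho-can}, together with the isogeny compatibility of $\exp^*$ which makes $\varphi_R:R\isom R'$ correspond to $\widehat{\Sym}\varphi:\widehat{\Sym}L_A\isom\widehat{\Sym}L_A'$.
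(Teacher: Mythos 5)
Your proposal is correct and follows essentially the same route as the paper: apply the residue isomorphism (Theorem \ref{thm:coh-equiv} / Corollary \ref{cor:equiv-coh} with $M=L_A^*$), compute $\res(\mult(\pol_{\alpha_\varphi}))=\alpha_\varphi\cdot\varpi$, $\res((\deg\varphi)\pol\mid_{T\setminus D})=(\deg\varphi)\delta_0\varpi$ and $\res(\varphi^*\pol')=\sum_{d\in D}\delta_d\varpi$, and compare. Your extra bookkeeping for $\varphi^*\pol'$ (compatibility of $\varrho_\can$ with isogenies, $\varphi_R$ sending $\varpi$ to $\varpi'$) just makes explicit what the paper asserts in one line.
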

\begin{proof}
From  Theorem \ref{thm:coh-equiv} we have an isomorphism
\[
\res:H^{n-1}(T\setminus D,\Gamma;{\underline{L}}_A^*\otimes_{\underline{A}}\sLog\otimes\lambda)\isom
(L_A^*\otimes_AR[D]^0)^\Gamma.
\]
We have
$\res(\mult(\pol_{\alpha_\varphi}))=(\deg\varphi)\delta_0\varpi- \sum_{d\in D}\delta_d\varpi$ and
$\res((\deg\varphi) \pol\mid_{T\setminus D})=
(\deg\varphi)\delta_0\varpi$. Moreover,
$\res(\varphi^*\pol')=\sum_{d\in D}\delta_d\varpi$, which proves the claim.
\end{proof}
\begin{corollary}\label{cor:Eisenstein-comparison}
For $k\ge 0$
the relation of Eisenstein classes 
\[
\Eis_{\alpha_\varphi}^k(t)=(\deg\varphi)\Eis^k(t)-{\Eis'}^k(\varphi(t))
\]
holds in $H^{n-1}(\Gamma,\TSym^kL_A\otimes\lambda)$, where
we have used the isomorphism $\Sym^kL_A\isom \TSym^kL_A$.
\end{corollary}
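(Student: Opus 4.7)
The plan is to deduce the identity by a three-step procedure applied to the preceding proposition: pull back along $t$, apply $\contr$, and take the $k$-th moment $\exp^*_k$. The key algebraic fact driving the argument is $\contr\circ\mult=\id$, which will convert the $\mult$ on the left-hand side of the proposition back into an Eisenstein class, combined with the fact that the Eisenstein classes of Definition \ref{def:Eisenstein-variant} are defined exactly as $\contr$ of a pullback of $\pol$.

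Concretely, I start from
\[
\mult(\pol_{\alpha_\varphi})=(\deg\varphi)\pol\mid_{T\setminus D}-\varphi^*\pol'
\]
and pull back along the $\Gamma$-fixed point $t\in T\setminus D$, identifying $t^*\sLog\isom R$ via $\varrho_\can$ (and similarly $\varphi(t)^*\sLog'\isom R'$, which is compatible with $\varrho_\can$ by Proposition \ref{prop:rho-can} since $\varphi$ is an isogeny and $\deg\varphi$ is invertible in the $\QQ$-algebra $A$). Because $\mult$ is a morphism of sheaves, the left-hand side becomes $\mult(\Eis_{\alpha_\varphi}(t))$, while the right-hand side becomes $(\deg\varphi)t^*\pol-\varphi(t)^*\pol'$ (using $t^*\varphi^*=\varphi(t)^*$). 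Applying $\contr$ then yields
\[
\Eis_{\alpha_\varphi}(t)=(\deg\varphi)\Eis(t)-\Eis'(\varphi(t))
\]
in $H^{n-1}(\Gamma,R\otimes\lambda)$, by the definition of the Eisenstein classes and $\contr\circ\mult=\id$. Here we implicitly use the canonical identifications $\varphi_R\colon R\xrightarrow{\sim}R'$ and $\Lambda^n\varphi_A\colon \lambda_A\xrightarrow{\sim}\lambda(L')_A$, both isomorphisms because $\deg\varphi$ is invertible.

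Finally, applying the $k$-th component $\exp^*_k$ of the exponential map to this identity gives the desired formula. The only minor technicality is that $\Eis^k(t)$ and $\Eis'^k(\varphi(t))$ naturally land in $\Sym^kL_A$-valued cohomology while $\Eis_{\alpha_\varphi}^k(t)$ is $\TSym^kL_A$-valued; since $A$ is a $\QQ$-algebra, the canonical map $\Sym^kL_A\xrightarrow{\sim}\TSym^kL_A$ reconciles them, as the statement indicates. There is no serious obstacle anywhere: the corollary is a direct unwinding of definitions applied to the preceding proposition, with the only bookkeeping being the compatibility of the various canonical identifications under the isogeny $\varphi$.
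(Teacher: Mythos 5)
Your proof is correct and follows essentially the same route as the paper: pull back the identity $\mult(\pol_{\alpha_\varphi})=(\deg\varphi)\pol\mid_{T\setminus D}-\varphi^*\pol'$ along $t$, use $\contr\circ\mult=\id$ together with the definitions $\Eis_{\alpha_\varphi}(t)=t^*\pol_{\alpha_\varphi}$ and $\Eis(t)=\contr(t^*\pol)$, and then apply $\exp^*_k$. The only difference is cosmetic (you push forward from the proposition rather than unwinding the left-hand side), and your bookkeeping of the identifications via $\varrho_\can$, $\varphi_R$ and $\Lambda^n\varphi_A$ matches what the paper leaves implicit.
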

\begin{proof}
One has
\begin{align*}
\Eis_{\alpha_\varphi}^k(t)&= \exp^*_k(\contr\circ \mult (t^*\pol_{\alpha_\varphi}))\\
&= (\deg\varphi) \exp^*_k\circ\contr\left(t^*\pol\mid_{T\setminus D}
-t^*\varphi^*\pol'\right)\\
&=(\deg\varphi)\Eis^k(t)-{\Eis'}^k(\varphi(t)).
\end{align*}
\end{proof}

%
\subsection{Trace compatibility}
%

The polylogarithm classes are compatible  with respect to isogenies $\varphi_T:T'\to T$ (note that in this section we 
interchange the role of $L$ and $L'$).
This is a geometric incarnation of the distribution property 
of Eisenstein series. 

We use the following set up:
Let $L,L'$ be lattices of rank $n$ 
with actions by $\Gamma$ and let $\varphi:L'\to L$ be an isogeny compatible with the $\Gamma$-action.
Then one
has a group homomorphism $(\varphi,\id):L'\rtimes \Gamma\to L\rtimes \Gamma$.

We consider
finite non-empty subsets
$D\subset T^{(A)}$ and
$D'\subset {T'}^{(A)}$ such that $\varphi_T(D')\subset D$.
One has a cartesian square
\begin{equation}\label{eq:cart-sq}
\begin{CD}
\varphi^{-1}(D)@>>> T'\\
@V\varphi VV@VV\varphi V\\
D@>>> T
\end{CD}
\end{equation}

\begin{proposition}\label{prop:tr-diagr} Let $M$ be an $A[\Gamma]$-module,
then there is a trace map
\[
\Tr_\varphi:H^{n-1}(T'\setminus D',\Gamma; \underline{M}\otimes_{\underline{A}}\sLog_{T'}\otimes   \lambda')\to
H^{n-1}(T\setminus D,\Gamma; \underline{M}\otimes_{\underline{A}}\sLog_{T}\otimes   \lambda)
\]
such that the diagram 
\[
\begin{CD}
H^{n-1}(T'\setminus D',\Gamma; \underline{M}\otimes_{\underline{A}}\sLog_{T'}\otimes   \lambda')@>\res >\isom> \ker(M\otimes_A R'[D']\to M)^{\Gamma}\\
@V\Tr_\varphi VV @VV\varphi_RV\\
H^{n-1}(T\setminus D,\Gamma; \underline{M}\otimes_{\underline{A}}\sLog_{T}\otimes   \lambda)@>\res >\isom> \ker(M\otimes_AR[D]\to M)^{\Gamma}
\end{CD}
\]
commutes.
\end{proposition}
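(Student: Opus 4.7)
The plan is to build $\Tr_\varphi$ by combining two natural operations: the morphism of logarithm sheaves $\varphi_\sLog:\sLog_{T'}\to\varphi_T^{*}\sLog_T$ from Proposition \ref{prop:log-properties}(2), which on stalks realizes $\varphi_R:R'\to R$, and the covering trace for the finite \'etale map $\varphi_T:T'\to T$. Since $\varphi:L'\into L$ is an isogeny, $\varphi_T$ is a proper local homeomorphism of degree $\deg\varphi$ and is orientation-preserving, so that the natural identification $\varphi_T^{*}\lambda\isom\lambda'$ is compatible with fundamental classes. The counit $\varphi_{T*}\varphi_T^{*}\to\id$ combined with $\varphi_\sLog$ yields a morphism
\[
\varphi_{T*}\bigl(\underline{M}\otimes_{\underline{A}}\sLog_{T'}\otimes\lambda'\bigr)\longrightarrow \underline{M}\otimes_{\underline{A}}\sLog_T\otimes\lambda
\]
of $\Gamma$-equivariant sheaves on $T$; applying Leray for the finite map $\varphi_T$ turns this into the desired trace in equivariant cohomology.

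Next, the hypothesis $\varphi_T(D')\subset D$ gives $D'\subset\varphi_T^{-1}(D)$, so that $T'\setminus\varphi_T^{-1}(D)$ is an open subset of $T'\setminus D'$, and the cartesian square \eqref{eq:cart-sq} shows that $\varphi_T$ restricts to a finite \'etale cover $T'\setminus\varphi_T^{-1}(D)\to T\setminus D$. I would define $\Tr_\varphi$ as the composition of the restriction
\[
H^{n-1}(T'\setminus D',\Gamma;\underline{M}\otimes\sLog_{T'}\otimes\lambda')\longrightarrow H^{n-1}(T'\setminus\varphi_T^{-1}(D),\Gamma;\underline{M}\otimes\sLog_{T'}\otimes\lambda')
\]
with the cover trace into $H^{n-1}(T\setminus D,\Gamma;\underline{M}\otimes\sLog_T\otimes\lambda)$ built in the first step.

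To verify commutativity of the diagram, I would exploit the naturality of the localization triangle along the cartesian square \eqref{eq:cart-sq}. Proper base change identifies $(\varphi_{T*}\sLog_{T'})|_d$ with $\bigoplus_{d'\in\varphi_T^{-1}(d)}\sLog_{T',d'}$; on the corresponding local cohomology at $d\in D$ the cover trace is summation over preimages, and $\varphi_\sLog$ restricts to $\varphi_R$ on each stalk by Proposition \ref{prop:log-properties}(2). The restriction step extends a tuple $(a_{d'})_{d'\in D'}$ by zero to $\varphi_T^{-1}(D)$, so the total composite on the residue side sends it to $d\mapsto\sum_{d'\in D'\cap\varphi_T^{-1}(d)}\varphi_R(a_{d'})$, which is exactly $\varphi_R$ applied functorially; compatibility with the kernel condition (the common map to $M$) follows because $\varphi_R$ intertwines the two augmentations.

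The main subtlety I foresee is the orientation bookkeeping: ensuring the identification $\varphi_T^{*}\lambda\isom\lambda'$ has the correct normalization so that the numerical factor in the trace is $1$ rather than $\deg\varphi$ or $\det\varphi$. This reduces to a local calculation comparing fundamental classes of small neighbourhoods under the covering $\varphi_T$, after which the rest of the argument is purely formal diagram chasing.
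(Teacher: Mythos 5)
Your construction is essentially the paper's: the paper also defines $\Tr_\varphi$ as $\varphi_\sLog$ followed by restriction to $T'\setminus\varphi^{-1}(D)$ and then the pushforward trace along the finite map (the counit $R\varphi_!\varphi^!\to\id$, which for this finite covering is exactly your sum-over-sheets transfer), and it likewise checks the diagram via base change over the cartesian square at the points of $D$. The orientation normalization you flag as the main subtlety is handled in the paper by the canonical identification $\varphi^*(\underline{M}\otimes_{\underline{A}}\sLog_{T})\otimes\lambda'\isom \varphi^!(\underline{M}\otimes_{\underline{A}}\sLog_{T})\otimes\lambda$ for a finite topological submersion (Kashiwara--Schapira, Section 3.3), which packages your local fundamental-class comparison and yields the trace with factor $1$ rather than $\deg\varphi$.
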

\begin{proof}
As $\varphi:T'\to T$ is a topological submersion and a finite map we have $\varphi^*(\underline{M}\otimes_{\underline{A}}\sLog_{T})\otimes  \lambda'\isom \varphi^!(\underline{M}\otimes_{\underline{A}}\sLog_{T})\otimes \lambda$ (see \cite[Section 3.3]{KashiwaraSchapira}). 
In particular, the trace map 
$R\varphi_!\varphi^!(\underline{M}\otimes_{\underline{A}}\sLog_{T})\to
\underline{M}\otimes_{\underline{A}}\sLog_{T}$ induces a map
\[
\varphi_!\varphi^*(\underline{M}\otimes_{\underline{A}}\sLog_{T})\otimes \lambda'\to \underline{M}\otimes_{\underline{A}}\sLog_{T}\otimes \lambda.
\]
This gives 
\begin{align*}
H^{n-1}(T'\setminus D',\underline{M}\otimes_{\underline{A}}\sLog_{T'}\otimes  \lambda')&\xrightarrow{\varphi_\sLog} 
H^{n-1}(T'\setminus D',\varphi^*(\underline{M}\otimes_{\underline{A}}\sLog_{T})\otimes  \lambda')\\
&\xrightarrow{\restr} H^{n-1}(T'\setminus \varphi^{-1}(D),\varphi^*(\underline{M}\otimes_{\underline{A}}\sLog_{T})\otimes  \lambda')\\
&\xrightarrow{\isom}  H^{n-1}(T\setminus D,\varphi_!\varphi^*(\underline{M}\otimes_{\underline{A}}\sLog_{T})\otimes  \lambda')\\
&\to H^{n-1}(T\setminus D, \underline{M}\otimes_{\underline{A}}\sLog_{T}\otimes   \lambda),
\end{align*}
where we have used that  $\varphi:T'\setminus \varphi^{-1}(D)\to T\setminus D$ is finite, so that
$\varphi_!=\varphi_*$. The result follows from Theorem \ref{thm:coh-equiv} and the diagram  commutes 
because of the cartesian square \ref{eq:cart-sq} and \cite[3.1.9]{KashiwaraSchapira}.
\end{proof}
\begin{remark}
In this paper we consider only the trace compatibility for isogenies. We remark that
a similar statement holds also in the more general case of a submersion. This was used in \cite{Kings08} to compute the residue of
the Eisenstein classes on Hilbert modular varieties.
\end{remark}
We discuss now the consequences of this proposition for the different
notions of polylogarithm we have defined.
\begin{corollary}\label{cor:trace-compatibility} In the situation
of Definition \ref{def:pol-def} one has for 
$\alpha\in (A[D']^0)^\Gamma$ 
\[
\Tr_\varphi(\pol'_\alpha)=\pol_{\varphi_*(\alpha)}
\]
where $\varphi_*(\alpha)$ is the function
$\varphi_*(\alpha)(d)=\sum_{d'\in \varphi^{-1}(d)}\alpha(d')$.
\end{corollary}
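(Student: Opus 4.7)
The strategy is to apply the commutative diagram of Proposition \ref{prop:tr-diagr} (in the case $M = A$) to the class $\pol'_\alpha$ and use the uniqueness characterization of the polylogarithm via its residue (Definition \ref{def:pol-def}). Specifically, since $\pol_{\varphi_*(\alpha)}$ is the unique class whose residue equals $\varphi_*(\alpha)$, it suffices to verify two things: that $\varphi_*(\alpha)$ is a well-defined element of $(A[D]^0)^\Gamma$, and that $\res(\Tr_\varphi(\pol'_\alpha)) = \varphi_*(\alpha)$ under the identification $R[D]^0 \isom \ker(\sLog|_D \xrightarrow{\sigma_D} A)$ provided by $\varrho_\can$.

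First I would check that $\varphi_*(\alpha)$ lies in $(A[D]^0)^\Gamma$. The degree-zero condition $\sum_{d \in D} \varphi_*(\alpha)(d) = \sum_{d' \in D'} \alpha(d') = 0$ follows immediately from $\alpha \in A[D']^0$, while the $\Gamma$-equivariance of $\varphi_T$ together with $\alpha \in (A[D']^0)^\Gamma$ gives $\varphi_*(\alpha)(\gamma d) = \varphi_*(\alpha)(d)$.

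Next I would compute $\res(\Tr_\varphi(\pol'_\alpha))$ using the commutative square of Proposition \ref{prop:tr-diagr}: this equals $\varphi_R(\res(\pol'_\alpha)) = \varphi_R(\alpha)$, where $\alpha$ is viewed inside $\ker(\sLog_{T'}|_{D'} \xrightarrow{\sigma_{D'}} A)^\Gamma$ via the canonical trivialization $\varrho_\can$. Under $\varrho_\can$, the element $\alpha(d') \in A$ at the stalk $d' \in D'$ corresponds to the constant $\alpha(d') \cdot 1 \in R'$. Since $\varphi_R \colon R' \to R$ is an $A$-algebra homomorphism and $\varrho_\can$ is compatible with isogenies (Proposition \ref{prop:rho-can}), the generator $1_{d'} \in \sLog_{T', d'}$ maps to $1_{\varphi_T(d')} \in \sLog_{T, \varphi_T(d')}$ under the isogeny map. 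Therefore applying $\varphi_R$ stalkwise and summing over the fibres of $\varphi_T$ yields at the stalk $d \in D$ the value $\sum_{d' \in \varphi_T^{-1}(d)} \alpha(d') = \varphi_*(\alpha)(d)$, exactly matching $\varphi_*(\alpha)$ viewed inside $\ker(\sLog_T|_D \xrightarrow{\sigma_D} A)^\Gamma$.

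Combining these steps, $\res(\Tr_\varphi(\pol'_\alpha)) = \varphi_*(\alpha) = \res(\pol_{\varphi_*(\alpha)})$, so by the uniqueness in Corollary \ref{cor:equiv-coh} (the residue is an isomorphism) we conclude $\Tr_\varphi(\pol'_\alpha) = \pol_{\varphi_*(\alpha)}$. The only nontrivial point in this plan is verifying that the map labelled $\varphi_R$ in the diagram of Proposition \ref{prop:tr-diagr} really restricts to the expected pushforward $\varphi_*$ on $A$-valued functions; this hinges on the fact that $\varphi_R$ is an $A$-algebra map together with the isogeny-compatibility of $\varrho_\can$, both of which are already established, so no serious obstacle remains.
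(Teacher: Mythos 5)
Your proposal is correct and follows essentially the same route as the paper: the paper also deduces the statement immediately from the commutative diagram of Proposition \ref{prop:tr-diagr} together with the observation that the induced map on residues restricts, on the subspace $(A[D']^0)^\Gamma\subset (R[D']^0)^\Gamma$, to the pushforward $\varphi_*$. Your write-up merely makes explicit the routine verifications (that $\varphi_*(\alpha)\in (A[D]^0)^\Gamma$ and that the compatibility of $\varrho_\can$ with isogenies identifies the right-hand vertical map with $\varphi_*$) which the paper leaves implicit.
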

\begin{proof}
This is immediate from the definition, Proposition
\ref{prop:tr-diagr} and the fact that the
restriction of $\varphi$ to the subspace
$(A[D']^0)^\Gamma\subset (R[D']^0)^\Gamma$ is given by
the formula in the corollary.
\end{proof}

The following generalization of the trace compatibility is used
later in the general study of Eisenstein distributions.
In the situation of Proposition \ref{eq:cart-sq}
assume in addition that one has finite non-empty subsets $C\subset T^\tors$, $C'\subset {T'}^\tors$ 
with $\varphi_T(C')\subset C$ and $C\cap D=\emptyset$, $C'\cap D'=\emptyset$. We have 
\begin{equation}
\begin{CD}
C'\amalg D'@>>>\varphi^{-1}(C)\amalg\varphi^{-1}(D)@>>> T'\\
 @.@V\varphi VV@VV\varphi V\\
 @. C\amalg D@>>> T
\end{CD}
\end{equation}
We assume that 
$\Gamma$ stabilizes $C\cup D$ 
and $C'\cup D'$. Then the trace map
$\Tr_\varphi$ induces a homomorphism $A[C']\to A[C]$, which we call
$\varphi$ (it is the same as $\varphi:A[D']\to A[D]$). Let
\[
\varphi\circ\Tr_\varphi:H^{n-1}(T'\setminus D',\Gamma; \underline{A}[C']\otimes_{\underline{A}}\sLog_{T'}\otimes   \lambda')\to
H^{n-1}(T\setminus D,\Gamma; \underline{A}[C]\otimes_{\underline{A}}\sLog_{T}\otimes   \lambda)
\]
be the composition of the trace map $\Tr_\varphi$ with
the map induced by $\varphi$.
Recall the Eisenstein operator 
\[
\Eis:(A[C]\otimes_A A[D]^0)^\Gamma\to
H^{n-1}(\Gamma,\sLog\mid_{C}\otimes \lambda)
\]
from Definition \ref{def:Eis-as-map}. The trace compatibility for
$\pol_h$ has the following consequence for $\Eis$.
\begin{corollary}\label{cor:Eis-trace-comp}
Let $C'=\varphi^{-1}(C)$ and $\varphi^*:A[C]\to A[C']$ be the map 
$f\mapsto f\circ \varphi$. Then for $h\in (A[C]\otimes_AA[D']^0)^\Gamma$
one has 
\[
\Tr_\varphi(\Eis'_{(\varphi^*\otimes \id)(h)})=\Eis_{(\id\otimes \varphi_*)(h)}.
\]
\end{corollary}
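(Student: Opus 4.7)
The plan is to reduce the statement to the trace compatibility for the polylogarithm $\pol_\alpha$ (Corollary \ref{cor:trace-compatibility}) by a base-change argument along the closed embeddings $C \hookrightarrow T \setminus D$ and $C' = \varphi^{-1}(C) \hookrightarrow T' \setminus D'$.

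First I would treat the case in which $\Gamma$ fixes each point of $C$, and hence also each point of $C'$. By the remark following Definition \ref{def:Eis-as-map}, the invariant $h \in (A[C] \otimes_A A[D']^0)^\Gamma$ decomposes as $h = \sum_{c \in C} \delta_c \otimes h_c$ with $h_c \in (A[D']^0)^\Gamma$, and one computes
\[
\Eis'_{(\varphi^* \otimes \id)(h)} = \sum_{c' \in C'} (c')^* \pol'_{h_{\varphi(c')}}, \qquad \Eis_{(\id \otimes \varphi_*)(h)} = \sum_{c \in C} c^* \pol_{\varphi_*(h_c)}.
\]

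The key step is the base-change identity
\[
c^* \circ \Tr_\varphi = \sum_{c' \in \varphi^{-1}(c)} \Tr_{c' \to c} \circ (c')^*
\]
for the trace of Proposition \ref{prop:tr-diagr}, where $\Tr_{c' \to c} : H^{n-1}(\Gamma, \sLog'_{c'} \otimes \lambda') \to H^{n-1}(\Gamma, \sLog_c \otimes \lambda)$ is the fiberwise trace induced by $\varphi_{\sLog}$ and $\Lambda^n \varphi$. Combined with Corollary \ref{cor:trace-compatibility}, which asserts $\Tr_\varphi(\pol'_{h_c}) = \pol_{\varphi_*(h_c)}$, pulling back by $c^*$ and summing over $c \in C$ yields
\[
\sum_{c \in C} c^* \pol_{\varphi_*(h_c)} = \sum_{c \in C} \sum_{c' \in \varphi^{-1}(c)} \Tr_{c' \to c}\bigl((c')^* \pol'_{h_{\varphi(c')}}\bigr) = \Tr_\varphi\bigl(\Eis'_{(\varphi^* \otimes \id)(h)}\bigr),
\]
which is the desired identity.

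The main technical obstacle is the base-change identity above, which amounts to proper base change for the trace morphism $R\varphi_!\varphi^! \to \id$ used in Proposition \ref{prop:tr-diagr} along the closed inclusion $c \hookrightarrow T \setminus D$, invoking the cartesian square \eqref{eq:cart-sq} restricted to $C$. The general case, when $\Gamma$ acts nontrivially on $C$, follows by running the same argument $\Gamma$-equivariantly on the discrete $\Gamma$-sets $C$ and $C'$, since the construction of $\Eis$ is functorial in the underlying $A[\Gamma]$-module structures.
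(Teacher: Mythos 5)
There is a genuine gap, and it sits exactly where you reduce to the ``pointwise'' picture. Your special case assumes that if $\Gamma$ fixes each point of $C$ then it also fixes each point of $C'=\varphi^{-1}(C)$; this is false in general. $\Gamma$ only stabilizes $C'$ as a set and will typically permute the fibre $\varphi^{-1}(c)$ (the relevant example later in the paper is $C=\{0\}$, $\varphi=[p^r]$, where $\Gamma$ permutes the $p^r$-torsion points). Without pointwise fixing, the expression $\Eis'_{(\varphi^*\otimes\id)(h)}=\sum_{c'\in C'}(c')^*\pol'_{h_{\varphi(c')}}$ is not available: the individual terms $(c')^*\pol'_{h_{\varphi(c')}}\in H^{n-1}(\Gamma,\sLog'_{c'}\otimes\lambda')$ are not defined in $\Gamma$-equivariant cohomology when $c'$ is not $\Gamma$-fixed, since the remark after Definition \ref{def:Eis-as-map} explicitly requires $\Gamma$ to stabilize each point. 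The closing sentence ``the general case follows by running the same argument $\Gamma$-equivariantly'' is precisely the missing content; note also that you cannot repair it by averaging over orbits or by restricting to a finite-index subgroup fixing all points, because $A$ is an arbitrary ring (no division by indices) and restriction $H^{n-1}(\Gamma,-)\to H^{n-1}(\Gamma_0,-)$ is not injective.

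The paper's proof avoids the decomposition altogether, and this is the correct equivariant form of your ``key step'': keep the coefficient module $M=A[C]$ throughout, apply Proposition \ref{prop:tr-diagr} with this $M$ (so that, as in Corollary \ref{cor:trace-compatibility}, $\Tr_\varphi(\pol'_h)=\pol_{(\id\otimes\varphi_*)(h)}$ with $h$ viewed in $(A[C]\otimes_A A[D']^0)^\Gamma$), restrict to $C$ using base change along the cartesian square \eqref{eq:cart-sq} restricted over $C$ (this is your base-change identity, but stated for the $\Gamma$-sets $C'\to C$ rather than point by point), and then conclude with the commutative diagram intertwining $\varphi^*\otimes\id$, the module-structure maps \eqref{eq:AC-module-str} for $C$ and $C'$, and the fibrewise trace $\Tr_\varphi:\sLog'\mid_{C'}\to\sLog\mid_C$. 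Your underlying idea (trace compatibility of $\pol$ plus proper base change for the finite map $\varphi$) is the right one; what is missing is carrying it out with $A[C]$-coefficients equivariantly instead of after an illegitimate pointwise splitting.
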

\begin{proof}
This is immediate from the definition of $\Eis$ and the commutative diagram 
\[
\xymatrix{
A[C]\otimes \sLog'\mid_{C'}\ar[r]^{\varphi^*\otimes \id}\ar[d]_{\id\otimes \Tr_\varphi}& A[C']\otimes\sLog'\mid_{C'}\ar[r]^/.8em/{\eqref{eq:AC-module-str}}&
\sLog'\mid_{C'}\ar[d]^{\Tr_\varphi}\\
A[C]\otimes \sLog\mid_{C}\ar[rr]^{\;\eqref{eq:AC-module-str}}&&\sLog\mid_C.
}
\]
\end{proof}

%
\section{Explicit formulas}
%
In this section we give an explicit formula for the
topological polylogarithm. The computations 
were essentially done by Nori \cite{Nori} and we present
them here in a slightly different form.

In this section we always consider $A=\CC$ so that we can 
identify $R\isom \widehat{\Sym}^\cdot L_\CC$.
%
\subsection{The continuous trivialization of the logarithm sheaf}
%

Let $\sP$ be the space of positive definite symmetric bilinear forms on $V$, which we also
consider as translation invariant metrics on $V$. Let $V^*$ be the $\RR$-dual of $V$, then
we consider $\sP\subset \Hom(V,V^*)$ with its induced right $\Gamma$-action:
$B[\gamma](v,w):=B(\gamma v,\gamma w)$.

We let $ L\rtimes\Gamma$ act on $\sP\times V$ by
\[
(B,v)(\ell,\gamma):=( B[\gamma],\gamma^{-1}(v+\ell)).
\]
Note that the action of $\Gamma$ on $L$ factors
through $\GL(L)$, which acts almost discretely on $\sP$ and that $\sP$
is contractible.
We consider the sheaf $\sLog$ over $(\sP\times\Log)/\Gamma$
as the sheaf of sections of 
$\sP\times V/L\rtimes\Gamma$. Let $D\subset T^{(A)}$ be
a finite non-empty subset, which does not contain $0$. 
By general principles from
equivariant cohomology we have 
\[
H^{n-1}(T\setminus D, \Gamma, \sLog\otimes \lambda)\isom
H^{n-1}(\sP\times (V\setminus \pi^{-1}(D))/L\rtimes\Gamma,\sLog\otimes \lambda).
\]
We need to set up some more notation.

We write 
$\sLog^\infty$ and $\underline{R}^\infty$ for
the $\sC^\infty$-pro-bundles defined by the inverse system of bundles
that correspond to the local systems $\sLog^{(k)}$ and $\underline{R}^{(k)}$. We consider the sheaves of $\sC^\infty$-differential forms $\Omega^i \widehat{\otimes}\sLog:=\prolim_k\Omega^i\otimes\sLog^{(k)}$
and currents $\widehat{\Omega}^i\widehat{\otimes}\sLog:=\prolim_k \widehat{\Omega}^i\otimes \sLog^{(k)}$ etc. with values in these pro-bundles. Recall that a current is a generalized section of the 
sheaf of $k$-forms. 


On $\sLog^\infty$ we have the connection $\nabla:=d\otimes \id$ and on $\underline{R}^\infty$ we have the connection
$\nabla_0:=d\otimes \id$. 
\begin{definition}
Let $\kappa$ be the $\underline{R}^\infty$-valued $1$-form on $T$  
\[
\kappa\in V^*\otimes V\subset V^*\widehat{\otimes}R\subset\Gamma(T,\Omega^1_T\widehat{\otimes}\underline{R}),
\]
which corresponds to the identity map $\id\in \Hom(V,V)\isom 
V^*\otimes V$.
\end{definition}
Obviously, $\kappa$ is $\Gamma$ invariant.
\begin{lemma}\label{lemma:cont-triv}
The sheaf $\sLog^\infty$ admits a unique continuous multiplicative trivialization $\varrho_\cont$
on $T$. The
section $\varrho_\cont$ is $\sC^\infty$, compatible with $N$-multiplication, and one has 
\[
\nabla(\varrho_\cont)=-\kappa\varrho_\cont.
\]
In particular, $\varrho_\cont\mid_{T^\tors}=\varrho_\can$.
\end{lemma}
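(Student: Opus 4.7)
The plan is to construct $\varrho_\cont$ explicitly as the descent of an exponential section on $V$, verify each listed property by direct computation from that formula, and handle uniqueness via a vanishing statement for continuous characters of the compact torus $T$.

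\emph{Existence.} Since $A=\CC$ is a $\QQ$-algebra, the isomorphism $\exp^*:R\isom\widehat{\Sym}L_\CC$ extends $\delta:L\to R_1^\times$ to a group homomorphism $\exp:L_\CC\to R_1^\times$, $v\mapsto\sum_{k\ge 0}v^{\otimes k}/k!$. I would introduce the $\sC^\infty$ map $s:V\to R$, $s(v):=\exp(-v)$, observe that $\exp(-v-\ell)=\delta_{-\ell}\exp(-v)$ is exactly the $L$-equivariance required to descend $s$ to $T$, and let $\varrho_\cont$ be the resulting $\sC^\infty$ global section of $\sLog^\infty$.

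\emph{Verification of properties.} The reduction $\exp(-v)\equiv 1\bmod I$ shows $\varrho_\cont$ qualifies as a trivialization in the sense of Definition~\ref{def:mult-triv}. Multiplicativity under Proposition~\ref{prop:log-properties}(4) reduces to the identity $\exp(-v-v')=\exp(-v)\exp(-v')$. For the connection, I would trivialize $\sLog^\infty$ on $V$ by parallel transport (where $\nabla$ becomes the ordinary differential $d$), compute $d\exp(-v)=-(dv)\exp(-v)$, and identify the translation-invariant $V$-valued $1$-form $dv$ on $T$ with $\kappa\in V^*\otimes V$, yielding $\nabla\varrho_\cont=-\kappa\varrho_\cont$. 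For $N$-compatibility, the isomorphism of Proposition~\ref{prop:log-properties}(3) for $[N]:L\to L$ is induced by the algebra map $[N]_R$ acting as $N^k$ on the $k$-th graded piece, giving $[N]_R(\exp(-v))=\exp(-Nv)$. The equality $\varrho_\cont|_{T^\tors}=\varrho_\can$ then follows from the uniqueness in Proposition~\ref{prop:rho-can}, since $T^{(\CC)}=T^\tors$ and $\varrho_\cont|_{T^\tors}$ is a multiplicative trivialization.

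\emph{Uniqueness.} Any two continuous multiplicative trivializations of $\sLog^\infty$ differ by a continuous group homomorphism $\chi:T\to R_1^\times$. Equipping $R$ with the $I$-adic topology, $R_1^\times=\varprojlim_k R_1^\times/(1+I^{k+1})^\times$, and each finite-dimensional quotient is an abelian unipotent complex algebraic group, identified via the truncated logarithm with $(\CC^{N_k},+)$. Since $T$ is compact and $\CC^{N_k}$ admits no nontrivial compact subgroups, every truncation of $\chi$ must vanish, forcing $\chi=1$. The main obstacle will be the connection equation, namely identifying the flat connection on $\sLog^\infty$ with $d$ in the chosen pullback trivialization and matching $dv$ with $\kappa$ under descent; once that identification is in place, the rest is direct bookkeeping with the explicit formula $\varrho_\cont(\pi(v))=\exp(-v)$.
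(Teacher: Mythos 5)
Your proposal is correct and takes essentially the same approach as the paper: existence via descending the section $v\mapsto\exp(-v)=\sum_{k\ge 0}(-v)^{\otimes k}/k!$ using the equivariance of \eqref{eq:ass-sheaf}, with $d\exp(-v)=-\kappa\exp(-v)$ and compatibility with $N$-multiplication checked directly, and uniqueness from the triviality of $\Hom_\cont(T,(1+I)^\times)$ because $T$ is compact. Your additional details (the pro-unipotent description of $R_1^\times$ explaining why compactness forces the character to vanish, and invoking Proposition \ref{prop:rho-can} for $\varrho_\cont\mid_{T^\tors}=\varrho_\can$) simply make explicit what the paper leaves implicit.
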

\begin{proof}
The set of continuous multiplicative trivializations with the property in the lemma is
a torsor under $\Hom_\cont(T, (1+I)^\times)$, which is trivial because $T$ is compact.
This shows the uniqueness of $\varrho_\cont$. For the existence we consider
\begin{align*}
V\to V\times R^\times&& v\mapsto(v, \exp(-v))
\end{align*}
where $\exp(-v):=\sum_{k\ge 0}\frac{(-v)^{\otimes k}}{k!}$. This is a section 
by \eqref{eq:ass-sheaf}, is compatible with $N$-multiplication and has the desired
property $d\exp(-v)=-\kappa\exp(-v)$.
\end{proof}
%
\subsection{Green's currents and the topological polylogarithm}
%

We use $\varrho_\cont$ from Lemma \ref{lemma:cont-triv}
to identify $\varrho_\cont:\underline{R}^\infty\isom \sLog^\infty$.
The connection $\nabla$ of $\sLog^\infty$ corresponds to $\nabla_0-\kappa$ under this identification.
In particular, we can compute the equivariant cohomology of $\sLog$ as 
\[
H^i(T\setminus D,\Gamma;\sLog\otimes\lambda)=H^i(({\Omega}^\cdot(\sP\times (T\setminus D))\widehat{\otimes}R\otimes \lambda)^\Gamma,\nabla).
\]
For the construction of a cohomology class representing the topological polylogarithm $\pol_\alpha$, 
we will first construct a certain Green's-current. To define
these, we need two notations: Let $\lambda^*:=\Hom_\ZZ(\lambda,\ZZ)$, then 
the volume form on $T$ is defined to be the section
\begin{equation}
\vol\in \lambda^*\otimes\lambda\subset \Omega^n(T)\otimes \lambda
\end{equation}
corresponding to the isomorphism $\lambda\isom \lambda$.
Let $\delta_{\sP\times\{0\}}$ be the 
delta function of $\sP\times \{0\}\subset \sP\times T$.
We consider this as an element in $\widehat{\Omega}^n(T)\otimes\lambda$ by multiplying it with
$\vol$.
\begin{definition}\label{def:Green-current}
A \emph{Green's-current} is an $n-1$-current 
$\sG\in  (\widehat{\Omega}^{n-1}(\sP\times T)\widehat{\otimes}R\otimes \lambda)^\Gamma$, which is
smooth on $\sP\times (T\setminus\{0\})$, and such that
\[
\nabla(\sG)=\delta_{\sP\times\{0\}}\vol-\vol
\]
in $(\widehat{\Omega}^{n}(\sP\times T)\widehat{\otimes}R\otimes \lambda)^\Gamma$.
\end{definition} 
With a method due essentially to Nori we prove in the next 
section (see Corollary \ref{cor:Epstein-zeta}):
\begin{theorem}\label{thm:existence-green-currents}
A Green's-current as in Definition \ref{def:Green-current} exists. 
\end{theorem}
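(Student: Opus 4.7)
The plan is to build $\sG$ as an Epstein-zeta regularized sum over the lattice of a natural $(n-1)$-form with a single delta source at the origin.

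\emph{Step 1: local model.} For each $B\in\sP$ I would construct an $(n-1)$-form $\sigma_B$ on $V\setminus\{0\}$, smooth in $(v,B)$, satisfying $d\sigma_B=\delta_0^V\vol$ as a current on $V$. This is the classical angular form attached to the Euclidean structure $B$: using the Hodge star $\ast_B$ of $B$, one may take $\sigma_B=c_n\,\ast_B d\log\sqrt{B(v,v)}$ with $c_n$ a universal normalizing constant, so that $\sigma_B$ is closed away from the origin and extends as a current on $V$ whose differential concentrates on $0$ with coefficient $\vol$. The construction is natural in $(V,B)$, hence $\Gamma$-equivariant.

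\emph{Step 2: regularized periodization.} Using the identification via $\varrho_\cont$, I work with $R$-valued forms on $\sP\times V$ with connection $d-\kappa$. The naive $L$-periodization $\sum_{\ell\in L}\delta_\ell\,\sigma_B(v+\ell)$ fails to converge in the $I$-adic topology: on the graded piece $\Sym^kL_\CC$ the summand is a polynomial of degree $k$ in $\ell$ against the only polynomially decaying $\sigma_B(v+\ell)$. To regularize, I introduce an auxiliary complex parameter $s$ and set
\[
\sG_s(v,B):=\sum_{\ell\in L}\delta_\ell\,\sigma_B(v+\ell)\,B(v+\ell,v+\ell)^{-s},
\]
which, termwise in each graded piece of $R$, converges for $\Re(s)$ sufficiently large (depending on the grade) and defines a holomorphic family of $\Gamma$-equivariant $R$-valued currents on $\sP\times T$, smooth away from $\sP\times\{0\}$. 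The choice of $\delta_\ell$ as twist ensures the correct $\sLog$-equivariance.

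\emph{Step 3: analytic continuation and the Green equation.} By Poisson summation applied componentwise in the $I$-adic filtration of $R$, the family $\sG_s$ admits a meromorphic continuation in $s$ that is regular at $s=0$, and I set $\sG:=\sG_{s=0}$. For $\Re(s)\gg 0$ a term-by-term computation of $(d-\kappa)\sG_s$ shows that the only boundary contributions come from $d\sigma_B=\delta_0^V\vol$; after summation over $L$ these yield $\delta_{\sP\times\{0\}}\vol$ on $\sP\times T$. The $-\vol$ summand on the right-hand side of the Green equation is produced by the constant term of the analytic continuation at $s=0$, i.e.\ by the polar structure (functional equation) of the Epstein zeta attached to $B$; this is precisely where the normalization of $c_n$ in $\sigma_B$ is fixed so as to match the canonical volume form $\vol$. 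Combining, one obtains $\nabla(\sG)=\delta_{\sP\times\{0\}}\vol-\vol$, and $\Gamma$-equivariance is automatic from the naturality of every ingredient in $(L,B)$.

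\emph{Main obstacle.} The subtle step is Step 3: one must control the meromorphic continuation at $s=0$ simultaneously in every graded piece $\Sym^kL_\CC$ of $R$, and show that the constant term there is exactly $-\vol$ on $\gr^0$ and vanishes on $\gr^{>0}$ (so that the answer is uniform along the filtration rather than a $k$-dependent correction). This requires the termwise functional equation of the Epstein zeta together with a precise matching of constants between $\sigma_B$, $\ast_B$, and $\vol$; this is the content of the ``Epstein zeta'' computation referred to in the statement and is where Nori's method enters essentially.
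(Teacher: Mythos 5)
Your plan is a genuinely different route from the paper's: you periodize a fundamental-solution $(n-1)$-form over the lattice $L$ on the space side and regularize the divergent twisted sum \`a la Hecke with an auxiliary parameter $s$, whereas the paper works entirely on the frequency side. There, $\sG$ is written as a Fourier series over the dual lattice $L^*$, the equation $\nabla\sG=\delta_{\sP\times\{0\}}\vol-\vol$ is solved \emph{exactly} coefficient by coefficient by inverting $(2\pi i\mu_V-\kappa_V)$ (possible because $\kappa_V$ is topologically nilpotent in $R$), the $-\vol$ term appears for free from setting the zero-frequency coefficient $E_0=0$, and the only analytic input is that the resulting series is a current, smooth away from $\sP\times\{0\}$ — which is Theorem \ref{thm:Epstein-zeta} on the series $\sK_s(B,v,P)$ for $v\neq 0$. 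In particular the paper never has to identify a constant term at $s=0$: no regularization enters the definition of $\sG$ at all. By contrast, in your scheme the entire burden is concentrated in your Step 3, which you yourself flag as the "main obstacle" and do not carry out: meromorphic continuation uniformly along the $I$-adic filtration, regularity at $s=0$ in every graded piece, interchange of $d-\kappa$ with the continuation, and the identification of the defect term as exactly $-\vol$ in grade $0$ and $0$ in grades $>0$. As submitted this is an assertion, not a proof, so there is a genuine gap at the heart of the argument.

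Moreover, one specific step would fail as written. You claim that for $\Re(s)\gg 0$ a term-by-term computation of $(d-\kappa)\sG_s$ produces only the contributions $d\sigma_B=\delta_0\vol$. But the term attached to $\ell$ behaves like $|v+\ell|_B^{-(n-1)-2\Re(s)}$ near its singular point $v=-\ell$, which is locally integrable only for $\Re(s)<1/2$, while already the grade-$0$ lattice sum converges only for $\Re(s)>1/2$ (and grade $k$ needs $\Re(s)$ larger still). So there is no value of $s$ at which your formula is simultaneously a convergent sum and a well-defined current across the singular locus, and the termwise delta computation is not available in the convergence range; the product $\sigma_B(v+\ell)\,B(v+\ell,v+\ell)^{-s}$ must itself first be defined by meromorphic continuation of local Riesz-type distribution families before the periodization and the $s=0$ evaluation can be discussed. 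This is repairable, but it is an additional missing idea, and it is precisely the kind of analysis the paper's Fourier-side construction avoids. Two smaller points: your local model $\sigma_B=c_n\ast_B d\log\sqrt{B(v,v)}$ is only correct for $n=2$ (in general one needs $\ast_B\,d\bigl(B(v,v)^{(2-n)/2}\bigr)$, i.e.\ the solid-angle form); and the $\delta_\ell$-twisted sum transforms as a section of $\sLog$ with its flat connection, not as an $\underline{R}$-valued form in the $\varrho_\cont$-trivialized picture with connection $d-\kappa$, so your bookkeeping in Step 2 mixes the two descriptions and should be fixed before the differential equation is even stated.
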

 Here we explain how we get a representative of $\pol_\alpha$ with the help of $\sG$.
The group $T$ acts on the complex $\widehat{\Omega}^{\cdot}(\sP\times T)\widehat{\otimes}R$
by translation. 
\begin{definition} Let $D\subset T^\tors$ be finite and non-empty and 
$\sG$ be a Green's-current. Let $\tau_d$ be the translation by $d\in T$ and $\alpha=\sum_{d\in D}\alpha_d1_d\in \CC[D]^0$.
Then we define the $n-1$-current
\[
\sG(\alpha):=\sum_{d\in D}\alpha_d\tau_{-d}^*\sG, 
\]
which is smooth on $\sP\times (T\setminus D)$.
\end{definition}
With this notation we can formulate the main result in
this section.
\begin{theorem}\label{thm:green-currents}
If $D\subset T^\tors$ and $\alpha\in (\CC[D]^0)^\Gamma$, then the restriction of
$\sG(\alpha)$ to $\sP\times (T\setminus D)$ is a smooth $\Gamma$-invariant closed $n-1$-form,
which represents $\pol_\alpha$. 
\end{theorem}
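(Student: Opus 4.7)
The plan is to verify in turn that $\sG(\alpha)\mid_{\sP\times(T\setminus D)}$ is smooth, closed, $\Gamma$-invariant, and has residue $\alpha$; uniqueness in Corollary \ref{cor:equiv-coh} will then force it to represent $\pol_\alpha$.

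First, smoothness is immediate: each translate $\tau^*_{-d}\sG$ is smooth away from $\sP\times\{d\}$ by Theorem \ref{thm:existence-green-currents}, so the finite sum $\sG(\alpha)$ is smooth on $\sP\times(T\setminus D)$. For closedness, I apply $\nabla$ termwise; since $\tau_{-d}$ is a translation of $T$, it commutes with $\nabla$ (the connection is translation invariant) and with the pullback of the delta current, giving
\[
\nabla\sG(\alpha)=\sum_{d\in D}\alpha_d\bigl(\delta_{\sP\times\{d\}}\vol-\vol\bigr)=\sum_{d\in D}\alpha_d\,\delta_{\sP\times\{d\}}\vol-\Bigl(\sum_{d\in D}\alpha_d\Bigr)\vol.
\]
The second term vanishes because $\alpha\in\CC[D]^0$, and the first is a current supported on $\sP\times D$, so $\nabla\sG(\alpha)=0$ on $\sP\times(T\setminus D)$.

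For $\Gamma$-invariance, note that $\gamma\in\Gamma$ acts on $T$ by $d\mapsto\gamma d$, hence $\gamma^*\tau_{-d}^*\sG=\tau_{-\gamma^{-1}d}^*\gamma^*\sG=\tau_{-\gamma^{-1}d}^*\sG$ by $\Gamma$-invariance of $\sG$ itself. The $\Gamma$-invariance of $\alpha$ (i.e.\ $\alpha_{\gamma d}=\alpha_d$) then gives $\gamma^*\sG(\alpha)=\sG(\alpha)$ after reindexing the sum.

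It remains to identify the cohomology class of $\sG(\alpha)$ with $\pol_\alpha$. By Corollary \ref{cor:equiv-coh}, it suffices to show
\[
\res\bigl([\sG(\alpha)]\bigr)=\alpha\in\ker\bigl(\sLog\mid_D\xrightarrow{\sigma_D}\CC\bigr)^\Gamma,
\]
where $\sLog\mid_D$ is identified with $\bigoplus_{d\in D}R$ via $\varrho_\can$. For each $d\in D$ choose a small disjoint ball $B_d\ni d$ trivializing $\sLog$; the residue at $d$ is the image of $[\sG(\alpha)]$ under the Gysin/boundary map
\[
H^{n-1}(B_d\setminus\{d\},\sLog\otimes\lambda)\isom H^n_{\{d\}}(B_d,\sLog\otimes\lambda)\isom\sLog_d.
\]
Concretely, this image is computed by extending $\sG(\alpha)$ as a current to all of $B_d$ and taking $\nabla$: the computation above shows that on $B_d$ only the summand $\alpha_d\tau_{-d}^*\sG$ contributes a singularity, and $\nabla(\alpha_d\tau_{-d}^*\sG)=\alpha_d\delta_{\sP\times\{d\}}\vol$ in $\widehat\Omega^n(B_d)\widehat\otimes R\otimes\lambda$. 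Under the standard identification of the local cohomology $H^n_{\{d\}}(B_d,\underline R\otimes\lambda)$ with $R$ via the class $\delta_{\{d\}}\vol\mapsto 1$, and after transporting through $\varrho_\cont$ (which equals $\varrho_\can$ on $T^\tors\supset D$ by Lemma \ref{lemma:cont-triv}), this picks out $\alpha_d\cdot 1=\alpha_d\cdot 1_d\in\sLog_d$. Summing over $d\in D$ gives $\alpha$, as required.

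The only nontrivial step is the final residue computation: one must be careful that the trivialization used to identify $\sLog_d\isom R$ in Corollary \ref{cor:equiv-coh} is $\varrho_\can$, and that the current-theoretic $\nabla$ of $\sG(\alpha)$ agrees with the Gysin map. Both points are settled by Lemma \ref{lemma:cont-triv}, which gives $\varrho_\cont\mid_{T^\tors}=\varrho_\can$, and by the standard compatibility of the de Rham–current isomorphism with the localization sequence.
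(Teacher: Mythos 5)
Your argument is correct and follows the paper's proof essentially verbatim: compute $\nabla\sG(\alpha)=\sum_{d\in D}\alpha_d\delta_{\sP\times\{d\}}\vol$ using $\sum_d\alpha_d=0$, deduce $\Gamma$-invariance and closedness on $\sP\times(T\setminus D)$, and identify the class via $\res(\sG(\alpha))=\alpha$ and Corollary \ref{cor:equiv-coh}. The only difference is that you spell out the residue bookkeeping (compatibility of the current-theoretic boundary with the localization map and the fact that $\varrho_\cont\mid_{T^\tors}=\varrho_\can$ from Lemma \ref{lemma:cont-triv}), which the paper leaves implicit.
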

\begin{proof}
As $\alpha$ and $\sG$ are $\Gamma$-invariant, the same holds for $\sG(\alpha)$. By definition
$\nabla(\sG(\alpha))=\sum_{d\in D}\alpha_d\delta_d$, which implies that the restriction of
$\sG(\alpha)$ to $\sP\times (T\setminus D)$ is closed and that $\res(\sG(\alpha))=\alpha$.
With Corollary \ref{cor:equiv-coh} we see that $\sG(\alpha)$ represents $\pol_\alpha$.
\end{proof}
We also want to construct a current, which represents the
variant of the polylogarithm $\pol$ from Definition \ref{def:pol-variant}. Let $\ell_1,\ldots,\ell_n$ be a basis
of $L$ and $\mu_1,\ldots,\mu_n$ be the dual basis of $V^*$. 
Define the closed form 
$\eta:=\frac{1}{n}\sum_{j=1}^n(-1)^j\mu_j d\mu_1\wedge\ldots
\wedge\widehat{d\mu_j}\wedge\ldots\wedge d\mu_n$, then
a straightforward computation shows
\[
-\kappa\eta=\varpi\vol,
\]
where $\varpi\in L_A^*\otimes_AL_A$ is element from \eqref{eq:varpi-def}.
\begin{theorem}\label{thm:green-current-variant}
Let $\widetilde{\sG}:=\varpi\sG+\eta$, then
\[
\nabla(\widetilde{\sG})=\delta_{\sP\times\{0\}}\varpi\vol
\]
and $\widetilde{\sG}$ represents $\pol\in 
H^{n-1}(T\setminus\{0\},\Gamma;\underline{L}_A^*\otimes_A\sLog\otimes\lambda)$ defined in \ref{def:pol-variant}.
\end{theorem}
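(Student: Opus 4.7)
The plan is a direct current-level verification, closely mirroring the proof of Theorem \ref{thm:green-currents}. Throughout I use the trivialization $\varrho_\cont$ of Lemma \ref{lemma:cont-triv} to identify $\sLog^\infty$ with $\underline{R}^\infty$, under which $\nabla = \nabla_0 - \kappa$, and I work in the complex of $L_A^*$-valued currents with coefficients in $\widehat{R}$.

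First I would compute $\nabla \widetilde{\sG} = \nabla(\varpi \sG) + \nabla \eta$ term by term. Since $\varpi \in L_A^* \otimes_A L_A \subset L_A^* \otimes_A R$ is a constant section (annihilated by $\nabla_0$, with no monodromy on its $L_A^*$-leg), the Leibniz rule gives
\[
\nabla(\varpi \sG) = \varpi\,\nabla \sG = \varpi\,\delta_{\sP\times\{0\}}\vol - \varpi\vol
\]
by the defining property of the Green's-current in Definition \ref{def:Green-current}. Since $\eta$ is closed by construction, $\nabla \eta = \nabla_0 \eta - \kappa \eta = -\kappa \eta$, and the identity $-\kappa \eta = \varpi\vol$ stated just before the theorem gives $\nabla \eta = \varpi\vol$. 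Summing, the two $\varpi\vol$ terms cancel to yield
\[
\nabla \widetilde{\sG} = \varpi\,\delta_{\sP\times\{0\}}\vol = \delta_{\sP\times\{0\}}\varpi\vol,
\]
which is the first assertion.

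For the second assertion, the identity just established shows that the restriction of $\widetilde{\sG}$ to $\sP \times (T \setminus \{0\})$ is a smooth, $\Gamma$-invariant, closed $(n-1)$-form with values in $L_A^* \otimes R \otimes \lambda$ (smoothness off $0$ follows from smoothness of $\sG$ there and of $\eta$ everywhere). Applying the residue isomorphism \eqref{eq:pol-isom-var-II}, the cohomology class of this form has residue equal to the coefficient of $\delta_{\sP\times\{0\}}\vol$ in $\nabla \widetilde{\sG}$, namely $\varpi \in (L_A^* \otimes I)^\Gamma$. By Definition \ref{def:pol-variant}, $\widetilde{\sG}$ therefore represents $\pol$.

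The main point requiring care is the $\Gamma$-equivariance of $\eta$, which a priori depends on the chosen basis $\ell_1, \ldots, \ell_n$; this is verified either by a direct base-change computation, or intrinsically by identifying $\eta$ as a translation-invariant $L_A^*$-valued $(n-1)$-form satisfying $-\kappa \eta = \varpi\vol$, with invariance then inherited from that of $\kappa$, $\varpi$, and $\vol$. The remaining steps—Leibniz rule for $\varpi \sG$ and sign bookkeeping in $\nabla \eta$—are routine.
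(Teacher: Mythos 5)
Your argument is correct and is essentially the paper's own proof: the single identity $(\nabla_0-\kappa)(\varpi\sG+\eta)=\varpi(\delta_{\sP\times\{0\}}\vol-\vol)-\kappa\eta=\varpi\,\delta_{\sP\times\{0\}}\vol$ is exactly what the paper computes, and your residue step via \eqref{eq:pol-isom-var-II} is the same (implicit) mechanism as in Theorem \ref{thm:green-currents}. Your extra remarks — the Leibniz step for $\varpi\sG$ and the basis-independence/$\Gamma$-invariance of $\eta$ — only make explicit what the paper leaves as routine.
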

\begin{proof}
This follows from the formula 
\[
(\nabla_0-\kappa)(\varpi\sG+\eta)=\varpi(\delta_{\sP\times\{0\}}\vol-\vol)-\kappa\eta=\varpi\delta_{\sP\times\{0\}}\vol.
\]
\end{proof}
%
\subsection{Explicit construction of a Green's current}
%
The idea for the construction of the Green's current presented in this section goes essentially back to Nori  \cite{Nori}. 
One rewrites $\sG$ as a Fourier-series and considers the resulting differential
equations for the coefficients. This differential equation can be solved by inverting a differential operator.

We write 
\[
L^*:=\Hom(L,\ZZ)\subset V^*
\]
for the dual lattice of $L$ and $\langle,\rangle:V\times V^*\to \RR$ for the evaluation map.
Further we let $i:=\sqrt{-1}$ be a square-root of $-1$.
Any current $\sG\in \widehat{\Omega}^{n-1}(\sP\times T)\widehat{\otimes}R\otimes \lambda$ has a Fourier-series
\[
\sG(B,v)=\sum_{\mu\in L^*}E_\mu(B)e^{2\pi i\langle v,\mu\rangle}
\]
where $E_\mu(B)\in \Omega^{n-1}(\sP\times T)\widehat{\otimes}R\otimes \lambda$ are $R$-valued differential forms, which are constant in the $T$ direction. We write 
\[
E_\mu(B)=E_\mu^0+\ldots+E_\mu^{n-1}
\]
and 
$E_\mu^a\in \Omega^a(\sP)\otimes \Lambda^{n-1-a}V^*\widehat{\otimes}R\otimes \lambda$ is
the component in bidegree $(a,n-1-a)$ of $E_\mu$.
\begin{lemma}
Suppose that $\sG$ is a Green's-current as in Definition \ref{def:Green-current} and
\[
\sG(B,v)=\sum_{\mu\in L^*}E_\mu(B)e^{2\pi i\langle v,\mu\rangle}
\]
its Fourier-series. If we assume that $E_0=0$, then the differential equation $\nabla(\sG)=\delta_{\sP\times\{0\}}\vol-\vol$ amounts to 
\begin{align}\label{eq:diff-eq}
&&dE_\mu+(2\pi i\mu-\kappa)E_\mu=\vol&&\mbox{ for all $\mu\neq 0$},&&
\end{align}
i.e.,
\begin{align}
&&(2\pi i\mu-\kappa)E_\mu^0=\vol&&\mbox{and}&& dE_\mu^a+(2\pi i\mu-\kappa)E_\mu^{a+1}=0.&&
\end{align}
Here we view $\mu\in V^*\subset \Lambda^\cdot V^*\widehat{\otimes}R$ as an $R$-valued $1$-form,
so that $(2\pi i\mu-\kappa)\in  \Lambda^\cdot V^*\widehat{\otimes}R$.
\end{lemma}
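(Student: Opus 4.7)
The plan is to expand both sides of $\nabla(\sG) = \delta_{\sP\times\{0\}}\vol - \vol$ in Fourier series along $T$, and to equate Fourier coefficients. Using the trivialization $\varrho_\cont$ of Lemma~\ref{lemma:cont-triv}, I identify $\sLog^\infty$ with $\underline{R}^\infty$ and regard $\sG$ as an $R$-valued $(n-1)$-current on $\sP \times T$; under this identification $\nabla$ becomes $d - \kappa\wedge$.

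For the right-hand side, the Dirac delta at $0 \in T = V/L$ admits the classical Fourier expansion $\delta_0 = \sum_{\mu \in L^*} e^{2\pi i \langle v, \mu\rangle}$ under the normalization $\int_T \vol = 1$ that is built into the definition of $\vol$ as the section of $\Omega^n(T)\otimes\lambda$ corresponding to $\mathrm{id}_\lambda$; hence
\[
\delta_{\sP\times\{0\}}\vol - \vol = \sum_{\mu \neq 0} e^{2\pi i\langle v,\mu\rangle}\,\vol.
\]
For the left-hand side, since $e^{2\pi i\langle v, \mu\rangle}$ is a scalar function with $d(e^{2\pi i\langle v,\mu\rangle}) = 2\pi i\mu \cdot e^{2\pi i\langle v,\mu\rangle}$ (viewing $\mu \in V^*$ as a translation-invariant $1$-form on $T$), the Leibniz rule gives termwise
\[
\nabla\bigl(E_\mu\, e^{2\pi i \langle v,\mu\rangle}\bigr) = \bigl(dE_\mu + (2\pi i\mu - \kappa) E_\mu\bigr) e^{2\pi i\langle v,\mu\rangle}.
\]
Equating Fourier coefficients then yields $dE_0 - \kappa E_0 = 0$, which holds trivially from $E_0 = 0$, and $dE_\mu + (2\pi i\mu - \kappa)E_\mu = \vol$ for every $\mu \neq 0$.

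The bidegree splitting is then immediate: each $E_\mu$ is constant along $T$, so writing $d = d_\sP + d_T$ we have $d E_\mu = d_\sP E_\mu$ and $dE_\mu^a$ has bidegree $(a+1,\, n-1-a)$; both $\mu$ and $\kappa$ are $T$-directional $1$-forms, so $(2\pi i\mu - \kappa) E_\mu^a$ has bidegree $(a,\, n-a)$. Matching against $\vol$ in bidegree $(0,n)$ gives $(2\pi i\mu - \kappa) E_\mu^0 = \vol$, and matching the bidegree-$(a+1, n-1-a)$ components for $a \geq 0$ gives $dE_\mu^a + (2\pi i\mu - \kappa) E_\mu^{a+1} = 0$, as claimed. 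The only place one could slip up is the sign in $d(E_\mu \cdot f)$, which is handled cleanly by writing $d(f E_\mu) = df \wedge E_\mu + f\, dE_\mu$: no parity factor appears since $f$ is a $0$-form.
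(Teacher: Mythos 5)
Your proof is correct and follows exactly the paper's argument: the paper's proof is the one-line remark that this is an immediate calculation from $\nabla=\nabla_0-\kappa$ together with the Fourier expansion $\delta_{\sP\times\{0\}}\vol=\sum_{\mu\in L^*}e^{2\pi i\langle v,\mu\rangle}\vol$, which is precisely what you carry out, including the routine bidegree bookkeeping. Nothing is missing.
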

\begin{proof}
Immediate calculation using $\nabla=\nabla_0-\kappa$
and the fact that the Fourier series of $\delta_{\sP\times\{0\}}\vol$ is $\sum_{\mu\in L^*}
e^{2\pi i\langle v,\mu\rangle}\vol$.
\end{proof}
We will now forget the fact that $\mu$ comes from the
lattice $L^*$ and try to find a natural solution of \eqref{eq:diff-eq} for any $0\neq\mu\in V^*$.
For this we consider the half-space $V_{\mu>0}:=\{v\in V\mid \langle v,\mu\rangle >0\}$. We 
consider $B$ as an isomorphism $B:V\isom V^*$, so that we have a map
\begin{align}
&&&v_\mu:\sP\to V_{\mu>0}&& B\mapsto B^{-1}(\mu).&&&
\end{align}
We will construct $E_\mu$ as the $v_\mu$-pull-back of a natural $n-1$-form $E_{(\mu)}$ on $V_{\mu>0}$.

Define the commutative DG-algebra $\cA:=\Omega^\cdot(V)\otimes \Lambda^\cdot V^*$ with differential 
$d(\omega\otimes \xi)=d\omega\otimes \xi$. On $\cA$ we have the derivation $\theta$ of degree $-1$,
which is zero on $\Omega^\cdot(V)$ and maps $\mu\in V^*\subset \Lambda^\cdot V^*$ to the linear
function $\mu_V\in \sC^\infty(V)$ with $\mu_V(v):=\mu(v)$.
The DG-algebra $\cA$ contains the subalgebra 
\[
{\Lambda}^\cdot(V^*\oplus V^*)\isom {\Lambda}^\cdot V^*\otimes {\Lambda}^\cdot V^*\subset \Omega^\cdot(V)\otimes {\Lambda}^\cdot V^*
\]
and we let $\Delta:\Lambda V^*\to {\Lambda}^\cdot V^*\otimes {\Lambda}^\cdot V^*$ be the 
algebra homomorphism induced by the diagonal map $V^*\to V^*\oplus V^*$. 

Let $\ell_1,\ldots,\ell_n$ be a basis of $L$
and $\mu_1,\ldots,\mu_n$ be the dual basis of $V^*$. Then
\[
\vol:=\mu_1\wedge\ldots\wedge \mu_n\otimes \ell_1\wedge\ldots\wedge\ell_n\in {\Lambda}^n V^*\otimes\lambda
\]
and $\kappa=\sum_{j=1}^nd\mu_{j,V}\otimes \ell_j\in \Omega^1(V)\widehat{\otimes} R$.
\begin{definition}
We let $\psi:=\Delta(\vol)\in \cA^n\otimes\lambda$ and write $\psi=\sum_{a=0}^n\psi^a$ with
$\psi^a\in \Omega^a(V)\otimes \Lambda^{n-a}V^*\otimes\lambda$. Then we define
\[
\nu^a:=\theta(\psi^a)\in \Omega^a(V)\otimes {\Lambda}^{n-1-a}V^*\otimes\lambda.
\]
\end{definition}
We note that the forms $\nu^a$ have the following explicit
description. Let $\omega_i:=d\mu_{i,V}\in\Omega^1(V)$,
$\omega_I:=\Lambda_{i\in I}\omega_i$ for any 
subset $I\subset \{1,\ldots,n\}$ and define
similarly $\mu_I$. Then 
\[
\nu^a=\sum_{|I|=a}\sum_{j=1}^{n}\mu_{j,V}\omega_I\otimes \mu_{I^c\setminus\{j\}},
\]
where $I^c$ is the complement of $I$. 
The forms $\nu^a$ have the following properties:
\begin{lemma}\label{lemma:nu-formulae}
For $\xi\in V^*$ one has the formulae
\begin{align*}
d\nu^a&=(a+1)\psi^{a+1}\\
\xi\wedge \psi^a&=-d\xi_V\wedge \psi^{a-1}\\
\xi\wedge \nu^a&=\xi_V\psi^a-d\xi_V\wedge \nu^{a-1}
\end{align*}
In particular, if one writes $\kappa_V:=\sum_{j=1}^n \mu_{j,V}\otimes \ell_j$, so that $d\kappa_V=\kappa$, one has
\[
\kappa\wedge \nu^a =\kappa_V\psi^a-d\kappa_V\wedge\nu^{a-1}.
\]
\end{lemma}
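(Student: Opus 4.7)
The plan is to derive all three identities from two structural facts: first, the explicit product form
\[
\psi = \prod_{i=1}^n (d\mu_{i,V} + \mu_i) \otimes (\ell_1\wedge\cdots\wedge\ell_n),
\]
obtained because $\psi = \Delta(\vol)$, $\Delta$ is an algebra map, and on generators $\Delta(\xi) = d\xi_V + \xi$ (with $d\xi_V$ sitting in the $\Omega^\cdot(V)$-factor of $\cA$ and $\xi$ in the $\Lambda^\cdot V^*$-factor); and second, that $\theta$ is a graded derivation of degree $-1$ with $\theta(\xi) = \xi_V$ for $\xi \in V^*$ and $\theta = 0$ on $\Omega^\cdot(V)$.

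For $d\nu^a = (a+1)\psi^{a+1}$, I introduce the degree-zero derivation $\Phi := d\theta + \theta d$ on $\cA$. A check on generators gives $\Phi(\xi) = d\xi_V$ for $\xi \in V^* \subset \Lambda^1 V^*$ and $\Phi = 0$ on $\Omega^\cdot(V)$. The form $\psi$ is $d$-closed (each factor $d\mu_{i,V}$ is closed and $d$ vanishes on $\Lambda^\cdot V^*$), and since the $\psi^a$ sit in distinct bidegrees, each $\psi^a$ is separately closed. Therefore $d\nu^a = d\theta(\psi^a) = \Phi(\psi^a)$. Applying $\Phi$ as a derivation to the product form for $\psi$ replaces one factor $(d\mu_{i,V} + \mu_i)$ at a time by $d\mu_{i,V}$; extracting the bidegree $(a+1, n-a-1)$ component produces terms with one distinguished $d\mu_{i,V}$ among $a+1$ such factors, and simple counting yields $\Phi(\psi^a) = (a+1)\psi^{a+1}$.

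For $\xi \wedge \psi^a = -d\xi_V \wedge \psi^{a-1}$, use that $\xi \wedge \vol = 0$ in $\Lambda^\cdot V^*$ because $\vol$ is of top degree. Applying the algebra map $\Delta$ gives $\Delta(\xi)\cdot \psi = 0$, i.e.\ $(\xi + d\xi_V)\psi = 0$; the bidegree $(a, n-a+1)$ component is exactly the asserted identity. For $\xi \wedge \nu^a = \xi_V \psi^a - d\xi_V \wedge \nu^{a-1}$, apply $\theta$ to the relation just proved via the signed Leibniz rule $\theta(ab) = \theta(a)b + (-1)^{|a|}a\theta(b)$, using $\theta(\xi) = \xi_V$, $\theta(d\xi_V) = 0$ and $\theta(\psi^b) = \nu^b$; the signs work out to give precisely the stated formula. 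The corollary about $\kappa$ is then immediate by substituting $\xi = \mu_j$, tensoring with $\ell_j \in L \subset R$, and summing over $j$, recognizing $\sum_j \mu_{j,V}\otimes \ell_j = \kappa_V$ and $\sum_j d\mu_{j,V}\otimes \ell_j = \kappa = d\kappa_V$.

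The whole argument is formal once the product expression $\psi = \prod(d\mu_{i,V}+\mu_i)$ is in hand; the main source of friction is bookkeeping of signs under two distinct graded derivations ($d$ of degree $+1$ and $\theta$ of degree $-1$) and the careful matching of bidegrees on both sides of each equation.
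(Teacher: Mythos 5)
Your proof is correct. For the second and third identities and the $\kappa$-formula you argue exactly as the paper does: the relation $\Delta(\xi)\wedge\Delta(\vol)=\Delta(\xi\wedge\vol)=0$ together with $\Delta(\xi)=d\xi_V+\xi$ gives the second identity after sorting by bidegree, the third follows by applying the odd derivation $\theta$ to it, and the $\kappa$-statement is the case $\xi=\mu_j$ paired with $\ell_j$ and summed over $j$. The only genuine divergence is in the first identity $d\nu^a=(a+1)\psi^{a+1}$: the paper proves it by induction on $n$, writing $\vol_n=\vol_{n-1}\wedge\mu_n$, expanding $\Delta(\vol_n)^a$ and applying $d\theta$ to the recursion, whereas you note that each $\psi^a$ is $d$-closed, so that $d\nu^a=(d\theta+\theta d)(\psi^a)$, and then compute the even derivation $\Phi=d\theta+\theta d$ directly on the product $\psi=\prod_{i}(d\mu_{i,V}+\mu_i)\otimes(\ell_1\wedge\cdots\wedge\ell_n)$; the factor $a+1$ arises because each monomial of $\psi^{a+1}$ is produced once for each of the $a+1$ positions of the distinguished factor $d\mu_{i,V}$. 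This Cartan-homotopy style argument buys a sign-free, induction-free computation (since $\Phi$ has degree $0$ no Koszul signs enter the Leibniz expansion), at the small cost of checking that $\Phi$ is a derivation vanishing on $\Omega^\cdot(V)$ and shifting bidegree by $(+1,-1)$, which you do. One remark: in the last step you tacitly read $\kappa\wedge\nu^a$ with $\kappa=\sum_j\mu_j\otimes\ell_j$ in the $\Lambda^1 V^*$-factor while $d\kappa_V=\sum_j d\mu_{j,V}\otimes\ell_j$ sits in the $\Omega^1(V)$-factor; this is the same identification the paper itself makes in writing $d\kappa_V=\kappa$, so it is a notational convention rather than a gap.
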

\begin{proof}
For a form 
$\omega\in \cA$ denote by $\omega^a\in \Omega^a(V)\otimes \Lambda^{n-a}V^*$ its $a$-part. 
For $\xi\in V^*$ one has $\Delta(\xi)=d\xi_V+\xi$ and hence
$d\theta(\Delta(\xi)^1)=0$ and $d\theta(\Delta(\xi)^0)=d\xi_V=\Delta(\xi)^1$. 
Write $\vol_n=\vol_{n-1}\wedge\mu_n$. Then
\[
\Delta(\vol_n)^a=\Delta(\vol_{n-1})^{a-1}\wedge \Delta(\mu_n)^1+
\Delta(\vol_{n-1})^{a}\wedge \Delta(\mu_n)^0.
\]
Applying $d\theta$ and induction on $n$ gives
\[
d\nu^a=a\Delta(\vol_{n-1})^{a}\wedge \Delta(\mu_n)^1+
(a+1)\Delta(\vol_{n-1})^{a+1}\wedge \Delta(\mu_n)^0+
\Delta(\vol_{n-1})^{a}\wedge \Delta(\mu_n)^1.
\]
This shows the first equation. The second follows from 
$\Delta(\xi)\wedge \Delta(\vol)=\Delta(\xi\wedge\vol)=0$
and $\Delta(\xi)=d\xi_V+\xi$ and the third by applying $\theta$
to it. The formula for $\kappa$ follows from the third equation
using the explicit formulae for $\kappa$ and $\kappa_V$.
\end{proof}
Write $\cA_{(\mu)}:=\Omega^\cdot(V_{>\mu})\otimes \Lambda^\cdot V^*$, then $\mu_V$ is invertible in
$\cA_{(\mu)}$. The element $\kappa_V:=\sum_{j=1}^n \mu_{j,V}\otimes \ell_j\in \sC^\infty(V)\widehat{\otimes}R$ is topologically
nilpotent, so that $\mu_V-\kappa_V$ is invertible in $ \sC^\infty(V_{>\mu})\widehat{\otimes}R$. 
Define
\begin{align}
&&E_{(\mu)}^a:=(-1)^a a!(2\pi i\mu_V-\kappa_V)^{-a-1}\nu^a&& E_{(\mu)}:=\sum_{a=0}^{n-1}E_{(\mu)}^a.&&
\end{align}
\begin{lemma}\label{lemma:diff-eq-for-Emu}
The formulae
\begin{align*}
(2\pi i\mu-\kappa)E_{(\mu)}^0&=\psi^0=\vol\\
d E_{(\mu)}^a+(2\pi i\mu-\kappa)E_{(\mu)}^{a+1}&=0
\end{align*}
hold. In particular, $E_\mu:=v_\mu^*E_{(\mu)}$ satisfies the differential equation \eqref{eq:diff-eq}.
Moreover, for $\gamma\in \Gamma$ one has
\[
\gamma^*E_\mu=E_{\mu\circ \gamma^{-1}}.
\]
\end{lemma}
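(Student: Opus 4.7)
The plan is to verify the two algebraic identities inside the DG-algebra $\cA$ and then pull them back along $v_\mu$ to obtain the corresponding statements on $\sP\times T$. For the first identity, the $a=0$ case of the third formula in Lemma \ref{lemma:nu-formulae} degenerates (since $\nu^{-1}=0$) to $\xi\wedge\nu^0=\xi_V\psi^0$ for $\xi\in V^*$, and the analogous $\kappa$-formula displayed in that lemma gives $\kappa\wedge\nu^0=\kappa_V\psi^0$. Subtracting yields $(2\pi i\mu-\kappa)\wedge\nu^0=(2\pi i\mu_V-\kappa_V)\psi^0$, and multiplying by the inverse of the $0$-form $(2\pi i\mu_V-\kappa_V)$ gives $(2\pi i\mu-\kappa)E_{(\mu)}^0=\psi^0=\vol$.

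For the recursion, I would compute each summand in $dE_{(\mu)}^a+(2\pi i\mu-\kappa)E_{(\mu)}^{a+1}$ separately. Leibniz applied to $E_{(\mu)}^a=(-1)^a a!(2\pi i\mu_V-\kappa_V)^{-a-1}\nu^a$, together with $d(2\pi i\mu_V-\kappa_V)^{-a-1}=-(a+1)(2\pi i\mu_V-\kappa_V)^{-a-2}(2\pi i\,d\mu_V-d\kappa_V)$ and $d\nu^a=(a+1)\psi^{a+1}$, gives $dE_{(\mu)}^a=(-1)^a(a+1)!\bigl[(2\pi i\mu_V-\kappa_V)^{-a-1}\psi^{a+1}-(2\pi i\mu_V-\kappa_V)^{-a-2}(2\pi i\,d\mu_V-d\kappa_V)\wedge\nu^a\bigr]$. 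On the other hand, applying the third formula of Lemma \ref{lemma:nu-formulae} to $\xi=2\pi i\mu$ and the displayed $\kappa$-variant to $\kappa$ yields $(2\pi i\mu-\kappa)\wedge\nu^{a+1}=(2\pi i\mu_V-\kappa_V)\psi^{a+1}-(2\pi i\,d\mu_V-d\kappa_V)\wedge\nu^a$, so $(2\pi i\mu-\kappa)E_{(\mu)}^{a+1}$ equals exactly the negative of the bracketed expression above, and the two summands cancel.

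To transfer everything to $\sP\times T$, I would use that the pullback $v_\mu^*:\cA\to \Omega^\cdot(\sP)\otimes\Lambda^\cdot V^*$ sends the $\Omega^\cdot(V)$-factor to $\Omega^\cdot(\sP)$, while the $\Lambda^\cdot V^*$-factor is identified with the translation invariant forms on $T$, which are closed. Hence $v_\mu^*$ intertwines the $\cA$-differential with the full de Rham differential on $\sP\times T$, and the identities proven in $\cA$ become precisely \eqref{eq:diff-eq}. For the equivariance, from $B[\gamma](v,w)=B(\gamma v,\gamma w)$ I get $B[\gamma]^{-1}(\mu)=\gamma^{-1}B^{-1}(\mu\circ\gamma^{-1})$, hence $v_\mu(B[\gamma])=\gamma^{-1}v_{\mu\circ\gamma^{-1}}(B)$. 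Combined with the manifest naturality $(\gamma^{-1})^*E_{(\mu)}=E_{(\mu\circ\gamma^{-1})}$ in $\cA$ (which follows from $(\gamma^{-1})^*\mu_V=(\mu\circ\gamma^{-1})_V$ together with the $\GL(V)$-invariance of $\kappa$ and of the construction $\psi\mapsto\nu$), this yields $\gamma^*E_\mu=E_{\mu\circ\gamma^{-1}}$.

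The main obstacle is purely bookkeeping: in $\cA=\Omega^\cdot(V)\otimes\Lambda^\cdot V^*$ the space $V^*$ plays two distinct roles — as translation invariant $1$-forms $d\xi_V$ in the $\Omega$-factor, and as formal generators in the $\Lambda$-factor on which $\theta$ acts nontrivially — so one must carefully track which incarnation of $\mu$, respectively $\kappa$, appears in each wedge product and in each application of $d$. Once the bookkeeping is under control, everything reduces to the mechanical identities of Lemma \ref{lemma:nu-formulae}.
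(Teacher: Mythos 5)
Your proposal is correct and follows essentially the same route as the paper: the recursion is obtained from the third identity of Lemma \ref{lemma:nu-formulae} (extended $R$-linearly to $\kappa$) together with the Leibniz rule and $d\nu^a=(a+1)\psi^{a+1}$, and the equivariance from $v_\mu\circ\gamma=\gamma^{-1}\circ v_{\mu\circ\gamma^{-1}}$ plus the invariance of $\vol$, $\theta$ and $\kappa_V$. Your extra remarks (the degenerate $a=0$ case via $\nu^{-1}=0$, and the observation that $v_\mu^*$ intertwines the $\cA$-differential with the de Rham differential on $\sP\times T$ because the $\Lambda^\cdot V^*$-components are translation invariant, hence closed) only make explicit what the paper leaves implicit.
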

\begin{proof}
From Lemma \ref{lemma:nu-formulae} we have
\begin{align*}
(2\pi i\mu-\kappa) \nu^{a+1}&=(2\pi i\mu_V-\kappa_V)\psi^{a+1}-d(2\pi i\mu_V-\kappa_V)\wedge \nu^{a}\\
d((2\pi i\mu_V-\kappa_V)^{-a-1}\nu^a)&= (a+1)\left( (2\pi i\mu_V-\kappa_V)^{-a-1}\psi^{a+1}
-(2\pi i\mu_V-\kappa_V)^{-a-2}  {d(2\pi i\mu_V-\kappa_V)}\nu^a\right)
\end{align*}
which show that the differential equations are satisfied. For the action of $\gamma$
note that $v_\mu\circ\gamma=\gamma^{-1}\circ v_{\mu\circ\gamma^{-1}}$. As $\vol$ and $\theta$ are
$\Gamma$-invariant one has $(\gamma^{-1})^*\nu^a=\nu^a$. 
The map  $\kappa_V:V\to R$ is the canonical inclusion and obviously $\Gamma$-invariant. 
Therefore $(\gamma^{-1})^*E_{(\mu)}=E_{(\mu\circ\gamma^{-1})}$ and the formula follows.
\end{proof}
\begin{lemma}\label{lemma:Emu-explicit}
Let $E_\mu^a:=v_\mu^*(E^a_{(\mu)})$ and consider $B^{-1}:V^*\isom V$
as bilinear form on $V^*$, then one has explicitly
\[
E_\mu^a=(-1)^a a!\frac{v_\mu^*(\nu^a)}{(2\pi iB^{-1}(\mu,\mu)-B^{-1}(\mu))^{a+1}}
=(-1)^a\frac{(k+a)!}{k!}\sum_{k\ge 0}
\frac{ B^{-1}(\mu)^{\otimes k}}{(2\pi iB^{-1}(\mu,\mu))^{k+a+1}}v_\mu^*(\nu^a)
\]
(where we let $B^{-1}(\mu)^{\otimes 0}:=1$) and
\[
v_\mu^*(\nu^a)=\sum_{|I|=a}\sum_{j=1}^{n}B^{-1}(\mu_{j},\mu)\Lambda_{i\in I}dB^{-1}(\mu_i,\mu)\otimes \mu_{I^c\setminus\{j\}}\otimes \ell_1\wedge\ldots\wedge\ell_n.
\]
\end{lemma}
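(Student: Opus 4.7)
The plan is to pull back the defining expression
\[
E_{(\mu)}^a=(-1)^a a!(2\pi i\mu_V-\kappa_V)^{-a-1}\nu^a
\]
term by term under $v_\mu:B\mapsto B^{-1}(\mu)$ and then expand the denominator as a geometric series. Since $\kappa_V=\sum_j \mu_{j,V}\otimes\ell_j$ is topologically nilpotent in $\sC^\infty(V_{\mu>0})\widehat{\otimes}R$ and $2\pi i\mu_V$ is invertible on $V_{\mu>0}$, the identity
\[
(2\pi i\mu_V-\kappa_V)^{-a-1}=\sum_{k\ge 0}\binom{a+k}{k}\frac{\kappa_V^k}{(2\pi i\mu_V)^{a+k+1}}
\]
holds in that completion; the prefactor $(-1)^a a!$ combines with $\binom{a+k}{k}=\tfrac{(a+k)!}{a!\,k!}$ to give the claimed coefficient $(-1)^a\tfrac{(a+k)!}{k!}$.

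Next I would compute the three pullbacks that appear. By definition of $v_\mu$ one has $v_\mu^*(\mu_V)(B)=\mu_V(B^{-1}(\mu))=B^{-1}(\mu,\mu)$ and, for each basis vector $\mu_j$, $v_\mu^*(\mu_{j,V})(B)=B^{-1}(\mu_j,\mu)$. Consequently
\[
v_\mu^*(\kappa_V)=\sum_{j=1}^n B^{-1}(\mu_j,\mu)\otimes\ell_j\in L_\CC\subset R,
\]
which is exactly $B^{-1}(\mu)$ viewed in $L_\CC$; its $k$-th power in $\widehat{\Sym}L_\CC\isom R$ is therefore $B^{-1}(\mu)^{\otimes k}$. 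Applying $v_\mu^*$ to the explicit formula
\[
\nu^a=\sum_{|I|=a}\sum_{j=1}^{n}\mu_{j,V}\,\omega_I\otimes\mu_{I^c\setminus\{j\}}\otimes\ell_1\wedge\cdots\wedge\ell_n
\]
and using $v_\mu^*(\omega_i)=v_\mu^*(d\mu_{i,V})=d\bigl(B^{-1}(\mu_i,\mu)\bigr)$ gives the stated expression for $v_\mu^*(\nu^a)$.

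Finally I would assemble the three pieces: substitute the geometric series for $(2\pi i\mu_V-\kappa_V)^{-a-1}$, pull back, and replace $\mu_V$, $\kappa_V^k$, $\nu^a$ by their images. Because each factor in the series is a scalar in $\sC^\infty(\sP)$ times an element of $\widehat{\Sym}L_\CC$, the order of multiplication is immaterial and we can collect the $v_\mu^*(\nu^a)$ outside the sum. This yields both stated formulas, the first being the compact closed form that justifies the notation $(2\pi iB^{-1}(\mu,\mu)-B^{-1}(\mu))^{-a-1}$ (where $B^{-1}(\mu)$ is understood inside $\widehat{\Sym}L_\CC$).

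No step is genuinely difficult; the only point that requires some care is checking that the purely formal geometric expansion of $(2\pi i\mu_V-\kappa_V)^{-a-1}$ is legitimate, i.e.\ that $\kappa_V/(2\pi i\mu_V)$ is topologically nilpotent in the $I$-adic topology on $\sC^\infty(V_{\mu>0})\widehat{\otimes}R$. This follows because $\kappa_V\in\sC^\infty(V)\otimes L_\CC$ and $L_\CC$ generates the augmentation ideal $I\subset R$, so $\kappa_V^k$ lands in $I^k\widehat{\otimes}\sC^\infty$; combined with the invertibility of $\mu_V$ on $V_{\mu>0}$ this makes the series converge in the completion, which is all that is needed.
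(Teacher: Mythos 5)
Your proposal is correct and is exactly the ``direct computation'' the paper invokes: pull back $E_{(\mu)}^a$ along $v_\mu$, use $v_\mu^*(\mu_V)=B^{-1}(\mu,\mu)$, $v_\mu^*(\kappa_V)=B^{-1}(\mu)$, $v_\mu^*(\omega_i)=dB^{-1}(\mu_i,\mu)$, and expand $(2\pi i\mu_V-\kappa_V)^{-a-1}$ binomially using the topological nilpotence of $\kappa_V$ already noted in the paper. Your bookkeeping of the coefficient $(-1)^a a!\binom{a+k}{k}=(-1)^a\tfrac{(a+k)!}{k!}$ also correctly places the factor inside the sum, fixing a small typo in the displayed statement.
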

\begin{proof}
Direct computation.
\end{proof}
We are going to show that the series 
$\sum_{\mu\in L^*\setminus\{0\}}E^a_\mu e^{2\pi i\langle v,\mu\rangle} $ defines a current on $\sP\times T$, which is
smooth on $\sP\times (T\setminus \{0\})$. 
The following proof is due to Levin (see the Appendix of \cite{Blottiere1}). Let
$P:\sP\times V\to \CC$ be a $\sC^\infty$ function, which is
a homogeneous polynomial of degree $g$ in the $V$-variables. We consider
the series of distributions
\[
\sK_s(B,v,P):=\sum_{\mu\in L^*\setminus \{0\}}
\frac{ P(B,\mu)}{ B^{-1}(\mu,\mu)^{s+g/2}}e^{2\pi i\langle v,\mu\rangle}
\]
and we are interested in the convergence and the analyticity in
$s$. We have the following result:
\begin{theorem}\label{thm:Epstein-zeta}
Let $v \neq 0$ then $\sK_s(B,v,P)$ is a smooth distribution for
all $s\in \CC$.
\end{theorem}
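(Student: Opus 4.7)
The plan is to express $\sK_s(B,v,P)$ as the Mellin transform of a theta-like series in $t$, and then use Poisson summation to establish superexponential decay of that series at both ends of the $t$-integration. For $\Re(s+g/2)>n/2$ the defining series for $\sK_s$ converges absolutely, and combined with the Gamma-function identity
\[
\frac{1}{B^{-1}(\mu,\mu)^{s+g/2}}=\frac{1}{\Gamma(s+g/2)}\int_0^\infty t^{s+g/2-1}e^{-tB^{-1}(\mu,\mu)}\,dt,
\]
an interchange of sum and integral (legitimate for $\Re(s)\gg 0$) yields
\[
\sK_s(B,v,P)=\frac{1}{\Gamma(s+g/2)}\int_0^\infty t^{s+g/2-1}\,\Theta(B,v,t)\,dt,
\]
where $\Theta(B,v,t):=\sum_{\mu\in L^*\setminus\{0\}}P(B,\mu)\,e^{-tB^{-1}(\mu,\mu)}\,e^{2\pi i\langle v,\mu\rangle}$.

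Next I would estimate $\Theta$ at each end of the $t$-integration. As $t\to\infty$, the Gaussian $e^{-tB^{-1}(\mu,\mu)}$ for $\mu\ne 0$ is bounded by $e^{-tc(B)}$, where $c(B)>0$ is the smallest eigenvalue of $B^{-1}$; summation over $\mu$ gives $\Theta(B,v,t)=O(e^{-ct})$ uniformly for $(B,v)$ in a compact set. For the behavior as $t\to 0$ I would apply Poisson summation to the completed sum $\sum_{\mu\in L^*}$, after adding and subtracting the $\mu=0$ contribution $P(B,0)$ (which vanishes when $g>0$ and is constant in $v$ when $g=0$). Since $P(B,\mu)$ is polynomial in $\mu$, the Fourier transform of $P(B,\mu)e^{-tB^{-1}(\mu,\mu)}$ is a polynomial in the dual variable times a Gaussian, and Poisson duality turns the sum into a constant multiple of $t^{-n/2}$ times
\[
\sum_{\ell\in L}\tilde P(B,t,v-\ell)\,e^{-\pi^2 B(v-\ell,v-\ell)/t},
\]
for some new polynomial $\tilde P$ in its arguments. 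Because $v\notin L$ one has $\min_{\ell\in L}B(v-\ell,v-\ell)>0$ uniformly for $v$ in a compact set avoiding $L$, so each summand is $O(e^{-c'/t})$ and $\Theta$ is rapidly decreasing at $t\to 0$ as well.

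With such superexponential decay at both $t=0$ and $t=\infty$, the integral $\int_0^\infty t^{s+g/2-1}\Theta(B,v,t)\,dt$ converges absolutely for every $s\in\CC$ and defines an entire function of $s$; the resulting formula furnishes the analytic continuation of $\sK_s(B,v,P)$ to the full $s$-plane. Smoothness in $(B,v)$ follows by differentiating under the integral: each $\partial_B$ or $\partial_v$ derivative introduces polynomial factors in $\mu$ on the $\Theta$ side (equivalently polynomial factors in $v-\ell$ together with negative powers of $t$ on the Poisson side), and all such factors are absorbed by the Gaussian bounds $e^{-ct}+e^{-c'/t}$.

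The main obstacle will be the careful bookkeeping of the polynomial and $t$-power factors produced by Poisson duality and by the dependence of $P$ on $B$, and verifying uniformity of the estimates as $(B,v)$ varies over a compact set with $v$ bounded away from $L$. Both are routine once the two-sided Gaussian decay has been established, and it is precisely this replacement of the merely polynomial decay $|\mu|^{-2\Re(s)-g}$ of the original series by the superexponential decay of $\Theta$ that turns $\sK_s(B,v,P)$ into a smooth distribution for every $s\in\CC$.
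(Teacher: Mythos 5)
Your route---writing $\sK_s(B,v,P)$ as a Mellin transform of a theta series and using Poisson summation---is the same as the paper's (which delegates the continuation step to Siegel), but there is a genuine error in your treatment of the case $g=0$. After subtracting the $\mu=0$ term you have $\Theta(B,v,t)=\Theta^{+}(B,v,t)-P(B,0)$, where $\Theta^{+}$ is the completed sum; Poisson summation and $v\notin L$ do give superexponential decay of $\Theta^{+}$ as $t\to 0$, but then $\Theta$ itself tends to the nonzero constant $-P(B,0)$, so it is \emph{not} rapidly decreasing at $t=0$. Consequently your assertion that $\int_0^\infty t^{s+g/2-1}\Theta\,dt$ converges absolutely for every $s\in\CC$ and is entire fails for $g=0$: the integral converges only for $\Re(s)>0$ and its continuation has a simple pole at $s=0$ with residue $-P(B,0)$. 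The theorem is rescued by the prefactor $1/\Gamma(s+g/2)=1/\Gamma(s)$, whose zero at $s=0$ cancels that pole after one splits off $-P(B,0)\int_0^1 t^{s-1}dt=-P(B,0)/s$; this bookkeeping must be made explicit, and it is exactly the point (interaction of $g=0$ with $v\neq 0$) that the paper isolates via Siegel's theorem.

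A second, structural omission: the statement concerns the series of \emph{distributions} $\sK_s(B,v,P)$ at every fixed $s$, and the values used later (e.g.\ $s=(k+a+1)/2$, which can be smaller than $n/2$) lie outside the half-plane of ordinary convergence. Your argument produces an entire family of smooth functions extending the sum from $\Re(s)>n/2$, but it never identifies this extension with the distribution defined by the series at a given $s$ with $\Re(s)\le n/2$. The paper bridges this gap in three short steps that you should add: the Fourier coefficients grow at most polynomially (uniformly for $B$ in a compact set), so the series converges to a tempered distribution for every $s$; the distribution-valued map $s\mapsto\sK_s$ is analytic (weak analyticity into the dual of a Fr\'echet space suffices); and the principle of analytic continuation for distribution-valued analytic functions then forces the series to agree with the smooth continuation for all $s$. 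Without some version of this, you have shown only that a smooth continuation exists, not that $\sK_s$ itself is smooth---which is what Corollary \ref{cor:Epstein-zeta} needs in order to conclude that the Green's current is a current on all of $\sP\times T$ that is smooth away from the zero section.
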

\begin{proof} We give the essential steps of the proof.

The first step is to show that the series 
$\sK_s(B,v,P)$ defines a (tempered) distribution on $\sP\times T$
for all $s\in \CC$. 
We may assume that $B$ varies in a compact subset of $\sP$.
A Fourier series defines a distribution, if
the coefficients grow less than a polynomial of fixed 
degree $N\ge 0$. But $\frac{ B^{-1}(\mu)^{\otimes k}}{(2\pi iB^{-1}(\mu,\mu))^{k/2+s}}$ satisfies this requirement if 
$s\ge -N$. 

The second step is to remark that the map $s\mapsto \sK_s(B,v,P)$
is analytic. This follows because for each test function
$\psi$ the series $\sK_s(B,v,P)(\psi)$ converges absolutely and uniformly 
on every compact subset of $\CC$ (same proof as for Dirichlet series, one also has to use that weakly analytic functions
with values in the dual of a Frechet space are actually analytic).

Next we note that $\sK_s(B,v,P)$ converges as a sequence of
functions absolutely and uniformly for $\Re(s)\ge n/2+\epsilon$
with $\epsilon>0$. The resulting analytic function on 
the half plane $\Re(s)>n/2$ can be analytically continued with
the standard procedure known from the analytic continuation
of the zeta functions: One writes $\sK_s(B,v,P)$ as the Mellin-transform of a theta series as in \cite[Chapter I, Paragraph 5]{Siegel}
and uses the Poisson summation formula to obtain the analytic
continuation $\widetilde{\sK}_s(B,v,P)$ of $\sK_s(B,v,P)$. To see that the function $\widetilde{\sK}_s(B,v,P)$ has no poles
one uses \cite[Chapter I,5, Theorem 3]{Siegel}. Note that
our polynomial function $P$ is homogeneous so that its value
at $v=0$ is $0$ if the degree $g>0$. For $g=0$ the polynomial 
is constant and it is here that the assumption $v\neq 0$ enters
to guaranty that $\widetilde{\sK}_s(B,v,P)$ has no pole.

Finally we remark that the principle of analytic continuation
holds for tempered distributions, so that we can conclude
that $\widetilde{\sK}_s(B,v,P)=\sK_s(B,v,P)$ for all $s\in \CC$.
This shows the assertion of the theorem.
\end{proof}
The next corollary finishes the proof of Theorem 
\ref{thm:existence-green-currents}.
\begin{corollary}\label{cor:Epstein-zeta}
The series 
\[
\sG(B,v)=\sum_{a=0}^{n-1}\sum_{\mu\in L^*\setminus \{0\}}E_\mu^a e^{2\pi i \langle v,\mu\rangle}
\] 
defines a $R\otimes\lambda$-valued, $\Gamma$-invariant current on $\sP\times T$,
which is smooth on $\sP\times (T\setminus\{0\})$ and
satisfies the differential equation
\[
\nabla(\sG)=\delta_{\sP\times \{0\}}\vol-\vol.
\] 
In particular, $\sG$ is a Green's-current.
\end{corollary}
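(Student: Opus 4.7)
The plan is to combine the termwise differential identity of Lemma~\ref{lemma:diff-eq-for-Emu} with the convergence-and-smoothness statement of Theorem~\ref{thm:Epstein-zeta} applied to the explicit formula for $E_\mu^a$ furnished by Lemma~\ref{lemma:Emu-explicit}. Since $R$ carries the $I$-adic topology and $\widehat{\Omega}^{n-1}(\sP\times T)\widehat{\otimes}R\otimes\lambda$ is the inverse limit along the truncations $R^{(k)}=R/I^{k+1}$, it suffices to establish the three assertions modulo each $I^{k+1}$ and then pass to the limit. The explicit description in Lemma~\ref{lemma:Emu-explicit} shows that the image of $E_\mu^a$ in a given bidegree and in $R^{(k)}$ is a \emph{finite} $R^{(k)}$-linear combination of expressions of the form
\[
\frac{P_j(B,\mu)}{B^{-1}(\mu,\mu)^{s_j+g_j/2}}\,v_\mu^*(\nu^a),
\]
where $P_j$ is homogeneous of degree $g_j$ in $\mu$ and $s_j$ is an integer. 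Each of these is exactly a term of the type treated by the distribution series $\sK_{s_j}(B,v,P_j)$ in Theorem~\ref{thm:Epstein-zeta}, so summing over $\mu\in L^*\setminus\{0\}$ produces a tempered current on $\sP\times T$ that is smooth on $\sP\times(T\setminus\{0\})$. These truncated currents are compatible in $k$, assembling into the required current $\sG$ in $\widehat{\Omega}^{n-1}(\sP\times T)\widehat{\otimes}R\otimes\lambda$.

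Given the convergence, the differential equation is a termwise consequence of Lemma~\ref{lemma:diff-eq-for-Emu}: for every $\mu\neq 0$ one has $\nabla(E_\mu e^{2\pi i\langle v,\mu\rangle})=\vol\cdot e^{2\pi i\langle v,\mu\rangle}$. Since distributional differentiation commutes with convergent sums of currents, summing over $\mu\in L^*\setminus\{0\}$ yields
\[
\nabla(\sG)=\sum_{\mu\in L^*\setminus\{0\}}\vol\cdot e^{2\pi i\langle v,\mu\rangle}=\delta_{\sP\times\{0\}}\vol-\vol,
\]
where the second equality splits off the constant term from the Fourier expansion of $\delta_{\sP\times\{0\}}\vol$ recorded in the Fourier-equation lemma. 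For $\Gamma$-invariance I would reindex the sum via $\mu\mapsto \mu\circ\gamma^{-1}$, which stabilises $L^*\setminus\{0\}$, and invoke the identity $\gamma^*E_\mu=E_{\mu\circ\gamma^{-1}}$ from Lemma~\ref{lemma:diff-eq-for-Emu} together with the $\Gamma$-equivariance of the character $v\mapsto e^{2\pi i\langle v,\mu\rangle}$ built into the action convention $(\ell,\gamma)\cdot v=\gamma^{-1}(v+\ell)$.

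The main obstacle is the first step: upgrading a formal Fourier series with $R$-valued coefficients to an honest current. Theorem~\ref{thm:Epstein-zeta} handles only a \emph{single} polynomial weight $P$, whereas the full expression for $E_\mu^a$ in Lemma~\ref{lemma:Emu-explicit} mixes infinitely many tensor powers $B^{-1}(\mu)^{\otimes k}$. The key is precisely that truncating modulo $I^{k+1}$ kills all but finitely many of these, so Theorem~\ref{thm:Epstein-zeta} applies uniformly at each finite level and the current is recovered by completeness. Once this is in place, the remaining assertions follow either directly from Lemma~\ref{lemma:diff-eq-for-Emu} or by routine reindexing, and the final sentence of the corollary is then simply the observation that smoothness off $\sP\times\{0\}$ plus the displayed PDE is the content of Definition~\ref{def:Green-current}.
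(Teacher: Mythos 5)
Your proposal is correct and follows essentially the same route as the paper: reduce the $R$-valued Fourier series, component by component (your truncations mod $I^{k+1}$ are just the pro-structure in which the paper's grading-by-degree argument takes place), to finitely many Epstein-type series $\sK_s(B,v,P)$ handled by Theorem \ref{thm:Epstein-zeta} via the explicit formula of Lemma \ref{lemma:Emu-explicit}, and then obtain the differential equation and $\Gamma$-invariance termwise from Lemma \ref{lemma:diff-eq-for-Emu}. The only nitpick is that the relevant exponents are $s=(k+a+1)/2$, so half-integers rather than integers, which is harmless since Theorem \ref{thm:Epstein-zeta} covers all $s\in\CC$.
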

\begin{proof}
We have 
\begin{equation*}
\sum_{\mu\in L^*\setminus \{0\}}
\frac{ B^{-1}(\mu)^{\otimes k}e^{2\pi i \langle v,\mu\rangle}}{(2\pi iB^{-1}(\mu,\mu))^{k+a+1}}v_\mu^*(\nu^a)
=
\frac{1}{(2\pi i)^{k+a1}}
\sum_{k\ge 0}\sK_{(k+a+1)/2}(B,v,P)\omega
\end{equation*}
where $\omega\in \Omega^a(\sP)\otimes \Lambda^{n-1-a}V^*$
is a smooth differential form, which does not depend on $\mu$
and $P$ is a polynomial of degree $k+a+1$ in the $V$-variables.
Hence, by Theorem \ref{thm:Epstein-zeta} the left 
hand side defines a current on $\sP\times T$, which is 
smooth on $\sP\times (T\setminus\{0\})$. By Lemma
\ref{lemma:diff-eq-for-Emu} we have
$\gamma^*(E_\mu e^{2\pi i\langle v,\mu\rangle})=
E_{\mu\circ\gamma^{-1}} e^{2\pi i\langle v,\mu\circ\gamma^{-1}\rangle}$, which shows that $\sG(B,v)$ is
$\Gamma$-invariant. The differential equation is an immediate
consequence of Lemma \ref{lemma:diff-eq-for-Emu}.
\end{proof}
%
%
\section{Applications to $L$-values of totally real fields and Eisenstein cohomology
of Hilbert modular varieties}
%
%
We discuss the relation between the topological polylogarithm
and special values of partial $L$-functions of totally real 
fields. This is due to Nori and Szcech but we need the
explicit formulae  for the $p$-adic interpolation. The second
application shows the relation of the topological polylogarithm 
to Eisenstein cohomology for Hilbert modular varieties. This
is a new result due to Graf and the detailed relationship 
will appear in his thesis \cite{Graf}. We discuss here the 
$p$-adic interpolation of his construction.
%
\subsection{Values of  partial $L$-functions of totally real fields}
%
In this section $F$ will be a totally real field of degree
$n$ over $\QQ$ and ring of integers $\cO_F$. Let
$L\subset F\otimes \RR$ be a fractional ideal and $h$ be
an element which is non-zero and torsion in $T(L)=F\otimes \RR/L$.
We define
\[
\cO_h^{+,\times}:=\{u\in \cO_F^\times\mid uh\equiv h\bmod L
\mbox{ and }u\mbox{ totally positive}\}
\]
We consider the partial zeta function for $\Re(s)>1$
\begin{equation}
\zeta(h,L,s):=\sum_{\alpha\in (h+L)^+/\cO_h^{+,\times}}\N\alpha^{-s}.
\end{equation}
where $(h+L)^+$ are the totally positive elements in $h+L$.
A  sign
character $\varepsilon:(F\otimes \RR)^\times \to \{\pm 1\}$ 
is a character, which is trivial on $(F\otimes \RR)^{+,\times}$
the connected component of $1$ in $(F\otimes \RR)^\times$.
Each $\varepsilon$ is the product of $|\varepsilon|$ sign characters of embeddings of $F$. 
We consider also more generally the partial zeta functions
\begin{equation}
\zeta(\varepsilon, h, L,s):=\sum_{\alpha\in (h+L)/\cO_h^{+,\times}}\frac{\varepsilon(\alpha)}{|\N\alpha|^s}
\end{equation}
and we have the identity
\begin{equation}
\sum_{\varepsilon}\zeta(\varepsilon, h, L,s)=2^n \zeta(h,L,s).
\end{equation}
\begin{remark}
Let $\frf$ and $\frb$ be two coprime integral ideals of $F$ and set
$L=\frf\frb^{-1}$. Then $\N\frb^{-s}\zeta(1,\frf\frb^{-1},s)$ is the
partial zeta function $\zeta(\frb, \frf,s)$ considered by
Siegel in \cite{Siegel-Fourier}.
\end{remark}
\begin{proposition} \label{prop:functional-equation}
The function $\zeta(\varepsilon, h, L,s)$ admits
an  analytic continuation to $\CC$ and satisfies the
functional equation 
\[
\zeta(\varepsilon, h, L,1-s)=
(\cos(\pi (s+1)/2))^{|\varepsilon|}
(\cos(\pi s/2))^{n-|\varepsilon|}
\frac{2^n i^{|\varepsilon|}\Gamma(s)^n}{(2\pi)^{ns}\vol(L)}
\sum_{\mu\in L^*\setminus\{0\}/\cO_h^{+,\times}}\frac{\varepsilon(\mu)e^{2\pi i\langle h,\mu\rangle}}{|\N\mu|^s}
\]
where $L^*=\Hom_\ZZ(L,\ZZ)$ is the dual lattice. 
\end{proposition}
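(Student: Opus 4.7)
The strategy is Hecke's classical method, adapted to the partial zeta function $\zeta(\varepsilon,h,L,s)$: realize it as a Mellin transform of a theta series over a fundamental domain for the $\cO_h^{+,\times}$-action on the totally positive cone, apply Poisson summation on $L$, and read off the functional equation from the resulting dual integral (compare \cite[Chap.~I, \S 5]{Siegel}).

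Fix an identification $V := F\otimes\RR \cong \RR^n$ via the $n$ real embeddings, and write $\varepsilon = \prod_{j\in S}\sgn(\cdot_j)$ for some $S\subset\{1,\ldots,n\}$ with $|S|=|\varepsilon|$. Starting from the elementary Mellin identities
\begin{align*}
|\alpha_j|^{-s} &= \pi^{s/2}\Gamma(s/2)^{-1}\int_0^\infty e^{-\pi t\alpha_j^2}\,t^{s/2-1}\,dt,\\
\sgn(\alpha_j)|\alpha_j|^{-s} &= \pi^{(s+1)/2}\Gamma((s+1)/2)^{-1}\int_0^\infty \alpha_j\,e^{-\pi t\alpha_j^2}\,t^{(s-1)/2}\,dt,
\end{align*}
taking the product over the $n$ embeddings, and unfolding the sum over $\cO_h^{+,\times}$-orbits against the integral (legitimate since $\varepsilon$ and $|\N\cdot|$ are $\cO_h^{+,\times}$-invariant) yields
\[
C(s)\,\zeta(\varepsilon,h,L,s) \;=\; \int_{(F\otimes\RR)^{+,\times}/\cO_h^{+,\times}}\Theta_S(h,L;t)\,d\mu_S(t),
\]
where $\Theta_S(h,L;t) := \sum_{\alpha\in h+L}(\prod_{j\in S}\alpha_j)\,e^{-\pi\sum_j t_j\alpha_j^2}$, $d\mu_S$ is the corresponding Mellin measure, and $C(s) = \pi^{-(ns+|\varepsilon|)/2}\Gamma(s/2)^{n-|\varepsilon|}\Gamma((s+1)/2)^{|\varepsilon|}$. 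The $\alpha=0$ term is absent because $h\notin L$.

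Next I would apply Poisson summation in $L$, using the Fourier transforms $\widehat{e^{-\pi t\alpha^2}}(\mu) = t^{-1/2}e^{-\pi\mu^2/t}$ and $\widehat{\alpha\,e^{-\pi t\alpha^2}}(\mu) = -i\mu\,t^{-3/2}e^{-\pi\mu^2/t}$, to rewrite $\Theta_S(h,L;t)$ as a similar dual theta series on $L^*$ in the inverted $t$-variables, twisted by $e^{2\pi i\langle h,\mu\rangle}$. Splitting the $t$-integral at $|\N t|=1$, substituting $t\mapsto t^{-1}$ on the $|\N t|<1$ piece (which preserves the $\cO_h^{+,\times}$-action since units have norm one), and inserting the dual theta identity turns that half of the integral into the analogous Mellin transform of the dual sum $\sum_{\mu \in L^*\setminus\{0\}/\cO_h^{+,\times}} \varepsilon(\mu)e^{2\pi i\langle h,\mu\rangle}|\N\mu|^{-(1-s)}$. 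Gaussian decay gives entire continuation of both halves, and equating them yields the functional equation up to explicit gamma factors.

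The final constant drops out from Legendre's duplication formula $\Gamma(s/2)\Gamma((s+1)/2) = 2^{1-s}\sqrt{\pi}\,\Gamma(s)$ (which collapses the gamma product into $2^n\Gamma(s)^n(2\pi)^{-ns}$) together with the reflection identities $\Gamma(s/2)\Gamma(1-s/2) = \pi/\sin(\pi s/2)$ and $\Gamma((s+1)/2)\Gamma((1-s)/2) = \pi/\cos(\pi s/2)$, which produce the factor $\cos(\pi(s+1)/2)^{|\varepsilon|}\cos(\pi s/2)^{n-|\varepsilon|}$; the residual sign combines with the $(-i)^{|\varepsilon|}$ from the Fourier transform to give $i^{|\varepsilon|}$. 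The main technical point I would expect to occupy me is the unit-group bookkeeping: using Dirichlet's unit theorem to decompose $(F\otimes\RR)^{+,\times}/\cO_h^{+,\times} \cong \RR_{>0}\times K$ with $K$ compact, and carefully tracking the Jacobian so that integration over the $\RR_{>0}$-factor against $d\mu_S$ produces exactly the Mellin variable $s$ on one side and $1-s$ on the other, with no stray normalization constants.
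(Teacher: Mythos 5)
Your proposal is correct and follows the classical Hecke--Siegel theta/Mellin/Poisson-summation method, which is essentially the paper's own approach: the paper gives no argument beyond citing Siegel's sketch (for $h=1$) and Deligne--Ribet, and your outline is precisely that cited argument carried out, with the gamma-factor bookkeeping matching the stated formula. One small imprecision: ``Gaussian decay gives entire continuation of both halves'' ignores the $\mu=0$ term of the dual theta series, which survives when $\varepsilon$ is trivial and produces a simple pole at $s=1$, so the continuation is only meromorphic in that case --- but this mirrors an imprecision already in the paper's statement (``analytic continuation to $\CC$'') and does not affect the functional equation or the values at non-positive integers used later.
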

\begin{proof}
This is a standard result. A sketch of the proof can
be found in \cite{Siegel-Fourier} (for $h=1$). The case of 
general $h$ is the same. Alternatively the result can be
deduced from \cite[Theorem 3.12]{Deligne-Ribet}.
\end{proof}
\begin{corollary}\label{cor:L-value}
Let $\sgn^{k+1}$ be the sign character $\sgn^{k+1}(\mu):=
\frac{N(\mu)^{k+1}}{|N(\mu)|^{k+1}}$. Then for
any integer $k\ge 0$, the value $\zeta(\varepsilon, h, L,-k)$ is $0$ except for
$\varepsilon=\sgn^{k+1}$. In particular,
$\zeta(\sgn^{k+1}, h, L,-k)=2^n\zeta( h, L,-k)$ and
one has
\[
\zeta(h, L,-k)=
\frac{(k!)^n}{(2\pi i)^{n(k+1)}\vol(L)}
\sum_{\mu\in L^*\setminus\{0\}/\cO_h^{+,\times}}\frac{e^{2\pi i\langle h,\mu\rangle}}{\N\mu^{k+1}}.
\]
\end{corollary}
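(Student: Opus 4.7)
The proof is a direct computation substituting $s = k+1$ into the functional equation of Proposition \ref{prop:functional-equation}, so that $1-s = -k$. The plan has three steps.

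First, I would analyze the prefactor $(\cos(\pi(s+1)/2))^{|\varepsilon|}(\cos(\pi s/2))^{n-|\varepsilon|}$ at $s = k+1$. When $k$ is even, $\cos(\pi(k+1)/2) = 0$, so the factor vanishes unless $|\varepsilon| = n$; when $k$ is odd, $\cos(\pi(k+2)/2) = 0$, so it vanishes unless $|\varepsilon| = 0$. In both cases the unique surviving sign character is $\sgn^{k+1}$ (it is the product of all $n$ embedding signs when $k+1$ is odd and is trivial when $k+1$ is even). Combined with the identity $\sum_\varepsilon \zeta(\varepsilon, h, L, s) = 2^n\zeta(h, L, s)$, this yields $\zeta(\sgn^{k+1}, h, L, -k) = 2^n \zeta(h, L, -k)$ and the vanishing claim.

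Second, I would simplify $\sgn^{k+1}(\mu)/|\N\mu|^{k+1}$: since $\N\mu^{k+1} = \sgn^{k+1}(\mu)\,|\N\mu|^{k+1}$ and $\sgn^{2(k+1)} = 1$, this ratio equals $1/\N\mu^{k+1}$. So the Dirichlet sum in the functional equation already takes the desired form
\[
\sum_{\mu \in L^*\setminus\{0\}/\cO_h^{+,\times}} \frac{e^{2\pi i \langle h, \mu \rangle}}{\N\mu^{k+1}}.
\]

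Third, I would verify the numerical constant. Writing $k+1 = 2m+1$ (resp.\ $2m+2$) in the even (resp.\ odd) case, one computes $\cos(\pi(k+2)/2)^n = (-1)^{n(m+1)}$ (resp.\ $\cos(\pi(k+1)/2)^n = (-1)^{n(m+1)}$), while $i^{-n(k+1)} = (-1)^{n(m+1)} i^{n}$ (resp.\ $(-1)^{n(m+1)}$). Therefore in both cases
\[
\frac{(\cos(\cdot))^{|\varepsilon|}(\cos(\cdot))^{n-|\varepsilon|} \cdot 2^n i^{|\varepsilon|} \Gamma(k+1)^n}{(2\pi)^{n(k+1)}} = \frac{2^n (k!)^n}{(2\pi i)^{n(k+1)}},
\]
matching the stated formula after dividing both sides of the functional equation by $2^n$.

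The only nontrivial part of this is the sign bookkeeping in step three, where one must keep parities of $n$, $m$, and $k+1$ separate while comparing powers of $i$ with the cosine values. Once one case ($k$ even) is handled carefully, the odd case is essentially identical. The cosine analysis in step one is the conceptual content: it is exactly the calculation that shows the $L$-values are non-vanishing at non-positive integers only for the parity-matching sign character.
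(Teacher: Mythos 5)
Your proposal is correct and is exactly the argument the paper intends: the paper's proof is a one-line reference to Proposition \ref{prop:functional-equation} and the zeros of the cosine factors, and you simply carry out that substitution $s=k+1$, the parity analysis of $(\cos(\pi(s+1)/2))^{|\varepsilon|}(\cos(\pi s/2))^{n-|\varepsilon|}$, and the constant bookkeeping in full detail. The sign computation $C\cdot i^{|\varepsilon|}=i^{-n(k+1)}$ checks out in both parity cases, so nothing further is needed.
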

\begin{proof}
This is an easy consequence of Proposition \ref{prop:functional-equation} and the location of the zeroes of the functions
$(\cos(\pi (s+1)/2))^{|\varepsilon|}$ and 
$(\cos(\pi s/2))^{n-|\varepsilon|}$.
\end{proof}

%
\subsection{The evaluation map}
%

\label{section:L-values}
We keep the notation of the previous section, i.e.,
$F$  is a totally real field of degree
$n$ over $\QQ$ with ring of integers $\cO_F$, 
$L\subset F\otimes \RR$ is a fractional ideal  and we
consider the torus $T:=F\otimes \RR/L$.

Let $\cO_F^{+,\times}$ 
be the group of totally positive units in $\cO_F$.
Note that this is a free abelian group of rank $n-1$.
Let $D\subset T^{(A)}$ be a finite non-empty set of
torsion points, $t\in T^{(A)}$ and $\alpha\in (A[D]^0)^\Gamma$.
We consider the Eisenstein class
\[
\Eis_\alpha(t)\in H^{n-1}(\Gamma,R\otimes\lambda(L))
\]
from Definition \ref{def:Eisenstein-class}, where 
$\Gamma\subset\cO_F^{+,\times}$ is the stabilizer of $D$ and $t$. Note that $\Gamma$ acts through the norm and hence trivially on
$\lambda$. The cap-product 
with $H_{n-1}(\Gamma,\ZZ)$ induces a homomorphism
\begin{equation}\label{eq:cap-product}
H^{n-1}(\Gamma,R\otimes\lambda(L))\otimes H_{n-1}(\Gamma,\ZZ)
\to H_0(\Gamma,R\otimes \lambda)=R_\Gamma\otimes \lambda
\end{equation}
where $R_\Gamma$ are the $\Gamma$-coinvariants.
For the actual evaluation we choose coordinates for $T$, which
at the same time allow us to trivialize $\lambda(L)\otimes\RR$ and
to give a generator for $ H_{n-1}(\Gamma,\ZZ)$.

Let $\{\tau_1,\ldots,\tau_n\}$
be the different embeddings of $F$ into $\RR$, so that we have
an isomorphism
$F\otimes\RR\isom \RR^n$. On $\RR^n$ we use the standard orientation. 
For each $x\in F\otimes \RR$ we write $x_i:=\tau_i(x)$.

If we identify $\lambda=H_n(L,\ZZ)\isom H_n(F\otimes\RR/L,\ZZ)$,
then the fundamental class of $F\otimes\RR/L$ provides us
with a generator of $\lambda$.

Let $(F\otimes\RR)^{+,\times}$ be the totally positive 
and invertible elements in $F\otimes \RR$. This is the connected
component of $1$ in $(F\otimes\RR)^\times$. The norm of $F/\QQ$ 
defines a homomorphism $\N:(F\otimes \RR)^\times\to \RR^\times$ 
and we denote by
\[
(F\otimes \RR)^1:=\ker((F\otimes\RR)^{+,\times}\xrightarrow{\N} \RR^{+,*})
\]
the subgroup of elements of norm $1$. Then $\Gamma \subset\cO_F^{+,\times}\subset (F\otimes \RR)^1$ and one has a canonical isomorphism
\[
H_{n-1}(\Gamma,\ZZ)\isom H_{n-1}((F\otimes \RR)^1/\Gamma,\ZZ)
\]
with the homology of $(F\otimes \RR)^1/\Gamma$. The logarithm
$\log:(F\otimes\RR)^{+,\times}\xrightarrow{\isom}F\otimes \RR\isom \RR^n$
induces an orientation on $(F\otimes\RR)^{+,\times}$. Using
the standard orientation on $\RR^{+,\times}$ this induces
also an orientation on $(F\otimes \RR)^1$.
We use the fundamental class of $(F\otimes \RR)^1/\Gamma$
as a generator of $H_{n-1}(\Gamma,\ZZ)$. 
\begin{definition}\label{def:evaluation}
With the above
notations and generators we define
the \emph{evaluation map} to be the homomorphism
induced by \eqref{eq:cap-product}
\begin{equation*}
\ev:H^{n-1}((F\otimes \RR)^1/\Gamma, R)\to
R_\Gamma.
\end{equation*}
\end{definition}
Note that $\ev$ is defined for any coefficient ring $A$.
In the case $A=\RR$ or $\CC$ the isomorphism $F\otimes \RR\isom\RR^n$ induces 
\[
\lambda(L)\otimes \RR\isom\lambda(\ZZ^n)\otimes\RR
\]
and we define $\vol(L)\in \RR$, such that 
$\vol(L)\lambda(\ZZ^n)$ corresponds to the lattice
$\lambda(L)$ under this isomorphism.
Then the evaluation is given  
explicitly  by 
\begin{equation}
\ev(\eta)=\vol(L)^{-1}\int_{(F\otimes \RR)^1/\Gamma} \eta
\end{equation}
for a differential form $\eta\in H^{n-1}((F\otimes \RR)^1/\Gamma, R\otimes \lambda(L))$.

We give a more explicit description of $(R_\CC)_\Gamma$. 
The isomorphism $L_\RR\isom\RR^n$ allows us to identify 
$R_\CC\isom \widehat{\Sym}L_\CC$ with the power series ring $\CC[[z_1,\ldots,z_n]]$.
The action of $\Gamma\subset\cO_F^{+,\times}$ on $L\otimes \RR$
decomposes into a direct sum of homomorphisms
$\tau_i:\Gamma\to \RR^\times$, such that 
$u\in \Gamma$ acts as $\tau_i(u)z_i$ on $z_i$. 
\begin{lemma}\label{lemma:coinvariants}
Let $w:=z_1\cdots z_n$ be the product of the $z_i$'s, then 
\[
R_\CC^\Gamma=\CC[[w]]\subset \CC[[z_1,\ldots,z_n]]=R_\CC
\]
and the projection $p_\Gamma:R_\CC\to (R_\CC)_\Gamma$
induces an isomorphism $(R_\CC)^\Gamma\isom (R_\CC)_\Gamma$. 
\end{lemma}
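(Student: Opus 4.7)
The plan is to analyze the $\Gamma$-action on $R_\CC = \CC[[z_1,\ldots,z_n]]$ via its monomial eigenbasis and then to establish the vector-space decomposition $R_\CC = R_\CC^\Gamma \oplus J_\Gamma R_\CC$, where $J_\Gamma R_\CC := \sum_{u \in \Gamma}(1-u)R_\CC$ is the submodule whose quotient defines $(R_\CC)_\Gamma$. This yields both claims at once: the formula for the invariants, and the isomorphism $R_\CC^\Gamma \isom (R_\CC)_\Gamma$.

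First I compute $R_\CC^\Gamma$. Each monomial $z^k := z_1^{k_1}\cdots z_n^{k_n}$ is a $\Gamma$-eigenvector with character $\chi_k(u) := \prod_i \tau_i(u)^{k_i}$, so $z^k$ is invariant iff $\sum_i k_i\log\tau_i(u) = 0$ for all $u \in \Gamma$. Since $\Gamma$ is the stabilizer of a finite set of torsion points of $T$, it has finite index in $\cO_F^{+,\times}$; by Dirichlet's unit theorem $\log\Gamma$ then spans the hyperplane $\{x \in \RR^n : \sum x_i = 0\}$. The non-negative integer vectors $k$ making $\chi_k$ trivial are therefore exactly the diagonal ones $k = (m,m,\ldots,m)$, which correspond to the monomials $w^m$, and so $R_\CC^\Gamma = \CC[[w]]$. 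Injectivity of $\CC[[w]] \to (R_\CC)_\Gamma$ is then immediate: any element $h \in J_\Gamma R_\CC$ is a \emph{finite} sum $\sum_j(1-u_j)r_j$ whose $k$-th coefficient equals $\sum_j(1-\chi_k(u_j))(r_j)_k$, and this vanishes for every diagonal $k$, so $\CC[[w]] \cap J_\Gamma R_\CC = 0$.

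The main work is surjectivity. I choose $u_1,\ldots,u_{n-1} \in \Gamma$ generating a finite-index subgroup $\Gamma' \subset \Gamma$, which exists because $\Gamma$ has rank $n-1$. For every non-diagonal $k$ some $\chi_k(u_j)$ differs from $1$: if not, $\chi_k|_{\Gamma'} = 1$ would force $\chi_k$ to have finite order $m$ on $\Gamma$, and then $\chi_{mk}$ trivial would force $mk$ (hence $k$) to be diagonal by the invariants computation. Partition the non-diagonal multi-indices into disjoint sets $V_j := \{k : \chi_k(u_{j'}) = 1 \text{ for all } j' < j,\, \chi_k(u_j) \ne 1\}$, and for $f = \sum_k a_k z^k \in R_\CC$ set
\[
g_j := \sum_{k \in V_j}\frac{a_k}{1-\chi_k(u_j)}z^k,
\]
which is a legitimate formal power series since the inversion $\frac{1}{1 - \chi_k(u_j)}$ is purely formal (no analytic convergence is required). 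A direct monomial-by-monomial check gives
\[
f = \sum_{k \text{ diagonal}} a_k z^k + \sum_{j=1}^{n-1}(1-u_j)g_j,
\]
decomposing $f$ into an element of $\CC[[w]]$ plus an element of $J_\Gamma R_\CC$. The delicate point to be overcome is precisely this finite-sum requirement in $J_\Gamma R_\CC$: the right-hand side uses only $n-1$ terms of the form $(1-u)g$, made possible by covering all non-diagonal indices with the finitely many sets $\{k : \chi_k(u_j) \ne 1\}$.
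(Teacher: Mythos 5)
Your proposal is correct. The invariants computation is the same as the paper's: both identify the monomials $z^k$ as $\Gamma$-eigenvectors and argue (you via Dirichlet's unit theorem and the log embedding, the paper via ``every trivial $\Gamma$-representation factors through the norm'') that only the diagonal multi-indices give the trivial character, so $R_\CC^\Gamma=\CC[[w]]$. Where you diverge is the invariants-to-coinvariants step: the paper simply remarks that the action of $\Gamma\otimes\RR$ is semi-simple, so $R_\CC^\Gamma$ is a direct summand, and leaves it there; you instead prove the explicit splitting $R_\CC=\CC[[w]]\oplus J_\Gamma R_\CC$, with injectivity by inspecting diagonal coefficients of a finite sum $\sum_j(1-u_j)r_j$ (using $\N(u)=1$ for totally positive units) and surjectivity by choosing a basis $u_1,\dots,u_{n-1}$ of a finite-index subgroup, partitioning the non-diagonal indices by the first $u_j$ with $\chi_k(u_j)\neq 1$, and inverting $1-\chi_k(u_j)$ coefficientwise, which is legitimate formally. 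This buys genuine rigor at exactly the point the paper glosses over: in the infinite-dimensional power-series setting, ``semi-simple, hence a direct summand'' does not by itself show that the complementary (non-trivial-character) part lies in the augmentation submodule, which by definition consists of \emph{finite} sums $(1-u)r$; your covering of all non-trivial characters by finitely many units is precisely what closes that gap, at the cost of a somewhat longer argument than the paper's two-line appeal to semisimplicity.
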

\begin{proof}
On each monomial $z_1^{k_1}\cdots z_n^{k_n}$ the element
$u\in \Gamma$ acts
via $\tau(u)^{k_1}\cdots \tau_n(u)^{k_n}$, so that the action
of $\Gamma\otimes\RR$ on $R_\CC$ is semi-simple. In particular, $R_\CC^\Gamma=R_\CC^{\Gamma\otimes\RR}$
is a direct summand. Moreover, as each trivial $\Gamma$-representation has to factor through the norm, $\Gamma$ acts trivially exactly
on $w^k$ for integers $k\ge 0$.
\end{proof}
\begin{remark}\label{rem:coinvariants}
Let $A$ be any $\QQ$-algebra and let $\Lambda^\cdot:=\Lambda^\cdot\Hom(\Gamma,\QQ)=H^\cdot(\Gamma,\QQ)$. Then the projection $p_\Gamma:R_A\to (R_A)_\Gamma$ yields
isomorphisms
\[
H^p(\Gamma,R_A)\isom H^p(\Gamma,(R_A)_\Gamma)=(R_A)_\Gamma\otimes \Lambda^p.
\]
\end{remark}
%
\subsection{The topological polylogarithm and $L$-values of totally real fields}
%
Theorem \ref{thm:green-currents} implies that the class of $\Eis_\alpha(t)$ is represented by
$\sum_{d\in D}\alpha(d)(t-d)^*\sG$, 
where we consider $t-d$ as a torsion section of the torus family
$((F\otimes \RR)^1\times T)/\Gamma \to (F\otimes \RR)^1/\Gamma$. 
Note that for any torsion section $h$
\[
h^*\sG\in H^{n-1}((F\otimes \RR)^1/\Gamma, R_\CC\otimes \lambda(L))
\]
because $\nabla(h^*\sG)=h^*(\delta_{(F\otimes \RR)^1/\Gamma\times\{0\}}\vol-\vol)=0$.
We can now formulate the main result in this section.
\begin{theorem}\label{thm:G-evaluation} Let $h\in T=F\otimes\RR/L$ be a non-zero torsion section and let $\Gamma=\cO_h^{+,\times}$ be the stabilizer of $h$ in $\cO_F^{+,\times}$. Identify $(R_\CC)_\Gamma\isom \CC[[w]]$ as 
in Lemma \ref{lemma:coinvariants}. Then
one has 
\[
\ev(h^*\sG)=(-1)^{n-1}\sum_{k\ge 0}\zeta(h,L,-k)\frac{w^k}{(k!)^n}.
\]
Equivalently, using the isomorphism $\widehat{\Sym}L_\CC\isom\widehat{\TSym}L_\CC$, we get
\[
\exp^*_k\circ\ev(h^*\sG)=\zeta(h,L,-k)z_1^{[k]}\cdots z_n^{[k]}\in 
\TSym^kL_\CC.
\]
\end{theorem}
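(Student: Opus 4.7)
The plan is to compute $\ev(h^*\sG)$ by inserting the Fourier expansion of $\sG$ from Corollary \ref{cor:Epstein-zeta}, unfolding the sum against the $\Gamma$-quotient $(F\otimes\RR)^1/\Gamma$, reducing the resulting integral to a product of one-variable Gamma integrals via a Mellin transform, and then matching the answer to $\zeta(h,L,-k)$ via the functional equation of Corollary \ref{cor:L-value}.

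First, I would identify $(F\otimes\RR)^{+,\times}\hookrightarrow\sP$ via the diagonal embedding $y\mapsto B_y$ with $B_y(\ell_i,\ell_j)=\delta_{ij}/y_i$, in a basis $\ell_1,\ldots,\ell_n$ of $L$ aligned with the embeddings $\tau_i$, and restrict to $(F\otimes\RR)^1$. Since $\ev$ integrates an $(n-1)$-form over this slice, only the bidegree $(n-1,0)$ component of $h^*\sG$ contributes, controlled by $E_\mu^{n-1}$ from Lemma \ref{lemma:Emu-explicit}; in these coordinates one has $B^{-1}(\mu)=\sum_i m_iy_i\ell_i$, $B^{-1}(\mu,\mu)=\sum_i m_i^2 y_i$, and $v_\mu^*(\nu^{n-1})=\N(\mu)\,\omega(\tilde y)\otimes\ell_1\wedge\cdots\wedge\ell_n$ with $\omega(\tilde y):=\sum_j\tilde y_j(-1)^{j-1}\bigwedge_{i\ne j}d\tilde y_i$.

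Second comes the unfolding. The integrand $\sum_{\mu\ne 0}E_\mu^{n-1}e^{2\pi i\langle h,\mu\rangle}$ is $\Gamma$-invariant: the $\Gamma$-action on $L^*$ permutes the terms and preserves $\N(\mu)$, and $e^{2\pi i\langle h,u\mu\rangle}=e^{2\pi i\langle u^{-1}h,\mu\rangle}=e^{2\pi i\langle h,\mu\rangle}$ because $uh\equiv h\bmod L$, so $\int_{(F\otimes\RR)^1/\Gamma}$ unfolds to $\sum_{[\mu]\in L^*\setminus\{0\}/\Gamma}\int_{(F\otimes\RR)^1}$. I would then project to coinvariants via Lemma \ref{lemma:coinvariants}: only the fully symmetric monomial $z_1^k\cdots z_n^k=w^k$ survives from the expansion of $B^{-1}(\mu)^{\otimes k'}$, which forces $k'=nk$ and selects the multinomial coefficient $\binom{nk}{k,\ldots,k}\N(\mu)^k\N(\tilde y)^k=\tfrac{(nk)!}{(k!)^n}\N(\mu)^k$ (using $\N(\tilde y)=1$).

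Third, I would evaluate the remaining integral by Mellin transform. Writing $q^{-s}=\Gamma(s)^{-1}\int_0^\infty r^{s-1}e^{-rq}\,dr$ with $s=nk+n$ and $q=\sum m_i^2\tilde y_i$, the change of variables $y_i=r\tilde y_i$ together with the identity $d\log\N(y)^{1/n}\wedge\omega(y)=dy_1\wedge\cdots\wedge dy_n$ recasts $\int_{(F\otimes\RR)^1}\omega(\tilde y)/(\sum m_i^2\tilde y_i)^{nk+n}$ as $\Gamma(nk+n)^{-1}\int_{(F\otimes\RR)^{+,\times}}\N(y)^k e^{-\sum m_i^2y_i}dy_1\wedge\cdots\wedge dy_n$, which factorizes into one-variable Gamma integrals $\prod_i\int_0^\infty y_i^k e^{-m_i^2y_i}dy_i=(k!)^n/\N(\mu)^{2(k+1)}$. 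Combined with the coefficient $(-1)^{n-1}(nk+n-1)!/(nk)!$ from $E_\mu^{n-1}$, the coinvariant factor $\tfrac{(nk)!}{(k!)^n}\N(\mu)^k$, and the $\N(\mu)$ from $v_\mu^*(\nu^{n-1})$, all the factorials collapse and the orbit of $\mu$ contributes $(-1)^{n-1}\sum_k w^k e^{2\pi i\langle h,\mu\rangle}/[(2\pi i)^{n(k+1)}\N(\mu)^{k+1}]$.

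Finally, dividing by $\vol(L)$ as in Definition \ref{def:evaluation} and summing over orbits, Corollary \ref{cor:L-value} identifies the resulting Dirichlet series $\sum_{[\mu]}e^{2\pi i\langle h,\mu\rangle}/\N(\mu)^{k+1}$ with $(2\pi i)^{n(k+1)}\vol(L)(k!)^{-n}\zeta(h,L,-k)$, yielding the stated formula. The main obstacle will be the careful bookkeeping of signs, orientations and normalizations — in particular orienting $(F\otimes\RR)^1/\Gamma$ so that $\omega(\tilde y)$ is compatible with $\vol(L)$, and rigorously justifying the exchange of summation, Mellin integration and coinvariant projection; the latter is handled by first performing the computation in the half-plane $\Re(s)\gg 0$ of absolute convergence and then invoking analytic continuation as in the proof of Theorem \ref{thm:Epstein-zeta}.
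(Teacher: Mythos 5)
Your proposal is correct and follows essentially the same route as the paper's proof: the Fourier expansion of $\sG$ restricted to the diagonal embedding $q\mapsto B_q$ of $(F\otimes\RR)^1$ into $\sP$, unfolding over $\Gamma$, the Mellin/Gamma-integral factorization over $(F\otimes\RR)^{+,\times}$, projection to coinvariants selecting the $w^k$-terms, and the final identification via Corollary \ref{cor:L-value}. The only (harmless) difference is the order of operations — you project to coinvariants before the Mellin step, picking up the multinomial coefficient $(nk)!/(k!)^n$, whereas the paper first factorizes the integral into $\prod_j\frac{1}{\mu_j(2\pi i\mu_j-z_j)}$ and projects afterwards; the factorial bookkeeping agrees in both versions.
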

\begin{proof}
From Corollary \ref{cor:Epstein-zeta} we know that
$
\sG(B,v)=\sum_{a=0}^{n-1}\sum_{\mu\in L^*\setminus \{0\}}E_\mu^a e^{2\pi i \langle v,\mu\rangle}
$ 
and by definition $h^*E_\mu^a=0$ for $a\neq n-1$. 

For the evaluation we use the following explicit embedding of
$(F\otimes \RR)^1\to \sP$. For $q=(q_1,\ldots,q_n)\in (F\otimes\RR)^1$ we consider the form $B_q\in \sP$ on $\RR^n$, defined by
\begin{equation}
B_q(v,w):=\sum_{j=1}^nq_j^{-1} v_jw_j.
\end{equation}
Then the map $v_\mu:\sP\to V$ is given by $v_\mu(B_q)=(q_1\mu_1,\ldots,q_n\mu_n)$ and
writing $R=\CC[[z_1,\ldots,z_n]]$ the map $\kappa_V:V\to R$ is given by 
$\kappa_V(v)=\sum_{j=1}^n v_jz_j$.
We want to compute the integral
\begin{align}\label{eq:integral}
\begin{split}
\ev(h^*\sG)&=\vol(L)^{-1}\int_{(F\otimes \RR)^1/\Gamma}h^*\sG\\
&=\frac{(-1)^{n-1}}{\vol(L)}\sum_{\mu\in L^*\setminus\{0\}/\Gamma}e^{2\pi i\langle h,\mu\rangle}
\int_{(F\otimes \RR)^1}\frac{ (n-1)!v_\mu^*(\nu^{n-1})}{(\sum_{j=1}^n 2\pi i\mu_j^2q_j-\mu_jq_jz_j)^n}.
\end{split}
\end{align}
Using $\N(q)=q_1\cdots q_n=1$ we get
\[
v_\mu^*(\nu^{n-1})\mid_{(F\otimes \RR)^1}=
\N(\mu)\sum_{j=1}^n(-1)^{j-1}d\log q_1\wedge\ldots\wedge
\widehat{d\log q_j}\wedge\ldots d\log q_n.
\]
Let $y_1,\ldots,y_n$ be the coordinate functions of $\RR^n$
and let $t:=(\N y)^{1/n}$, so that $y_j=tq_j$. 
Then $d\log t=\frac{1}{n}\sum_{j=1}^nd\log y_j$, which gives
\begin{equation}\label{eq:differential-form}
\N\mu\frac{dy_1}{y_1}\wedge\ldots
\wedge \frac{dy_n}{y_n}=\frac{dt}{t}v_\mu^*(\nu^{n-1})\mid_{(F\otimes \RR)^1}.
\end{equation}
We write 
\[
\frac{ (n-1)!}{(\sum_{j=1}^n 2\pi i\mu_j^2q_j-\mu_jq_jz_j)^n}
=
\int_{\RR^{+,\times}}e^{-t(\sum_{j=1}^n 2\pi i\mu_j^2q_j-\mu_jq_jz_j)}t^n\frac{dt}{t}
\]
and we substitute this and \eqref{eq:differential-form} into \eqref{eq:integral}. Using
the exact sequence $0\to (F\otimes \RR)^1\to (F\otimes \RR)^{+,\times}\xrightarrow{\N} \RR^{+,\times}\to 0$, we have to compute the  integral
\begin{align*}
\int_{(F\otimes \RR)^{+,\times}}e^{-(\sum_{j=1}^n 2\pi i\mu_j^2q_j-\mu_jq_jz_j)}\N y\frac{dy_1}{y_1}\wedge\ldots
\wedge \frac{dy_n}{y_n}&=
\prod_{j=1}^n\int_{\RR^{+,\times}}
e^{-y_j\mu_j(2\pi i\mu_j-z_j)}y_j\frac{dy_j}{y_j}\\
&=\prod_{j=1}^n\frac{1}{\mu_j(2\pi i\mu_j-z_j)}\\
&=\frac{1}{\N\mu^2}\sum_{\ell\ge 0}
\frac{1}{(2\pi i)^{\ell+n}}\prod_{\ell_1+\ldots+\ell_n=\ell}
\frac{z_1^{\ell_1}\cdots z_n^{\ell_n}}{\mu_1^{\ell_1}\cdots
\mu_n^{\ell_n}}.
\end{align*}
If we apply the projection $p_\Gamma:\CC[[z_1,\ldots,z_n]]\to \CC[[w]]$ only the monomials for $\ell=nk$ of the form
$(\frac{w}{\N\mu})^k$ survive and we get
\[
p_\Gamma\int_{(F\otimes \RR)^1/\Gamma}\frac{ (n-1)!v_\mu^*(\nu^{n-1})}{(\sum_{j=1}^n 2\pi i\mu_j^2q_j-\mu_jq_jz_j)^n}=
\sum_{k\ge 0}\frac{w^k}{(2\pi i)^{n(k+1)}\N\mu^{k+1}}.
\]
This gives 
\begin{align*}
\ev(h^*\sG)&=(-1)^{n-1}\vol(L)^{-1}\sum_{k\ge 0}
\left(\sum_{\mu\in L^*\setminus\{0\}/\Gamma}
\frac{e^{2\pi i\langle h,\mu\rangle}}{\N\mu^{k+1}}
\right)
\frac{w^k}{(2\pi i)^{n(k+1)}}\\
&=(-1)^{n-1}\sum_{k\ge 0}\zeta(h,L,-k)\frac{w^k}{(k!)^n}
\end{align*}
where in the last line we have used Corollary \ref{cor:L-value}.
The formula for $\exp^*_k(\ev(h^*\sG))$ follows from the fact that
$w^k=(z_1\cdots z_n)^k\mapsto (k!)^nz_1^{[k]}\cdots z_n^{[k]}$ under the isomorphism $\Sym^kL_\CC\isom \TSym^kL_\CC$.
\end{proof}
From the theorem we immediately get all the known rationality,
integrality and $p$-adic interpolation properties of 
$\zeta(h,L,s)$. For this we use the following principle:
For any subring $A\subset \CC$ consider the natural inclusion
$R_A\subset R_\CC$. Then we have a commutative diagram
\begin{equation}
\begin{split}
\begin{CD}
H^{n-1}(\Gamma,R_A\otimes \lambda)@>\ev >> (R_A)_\Gamma\\
@VVV@VVV\\
H^{n-1}(\Gamma,R_\CC\otimes \lambda)@>\ev >> (R_\CC)_\Gamma
\end{CD}
\end{split}
\end{equation}
and any class coming from $H^{n-1}(\Gamma,R_A\otimes \lambda)$ has to have coefficients in $A$ under the evaluation map.

To express the next results also in more classical notation,
let $\frf, \frb$ be coprime integral ideals and
$L:=\frf\frb^{-1}$. Then Siegel's zeta function is defined to be
\begin{equation}
\zeta(\frb,\frf,s):=\N\frb^{-s}\zeta(1,\frf\frb^{-1},s).
\end{equation}
\begin{corollary}[Klingen-Siegel]\label{cor:Klingen-Siegel}
Let $h\in F\otimes\RR/L$ be a non-zero torsion point, then for
$k\ge 0$ one has $\zeta(h,L,-k)\in \QQ$. In particular,
\[
\zeta(\frb,\frf,-k)\in \QQ.
\]
\end{corollary}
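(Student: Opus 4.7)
The strategy is to realize $\zeta(h,L,-k)$, up to a positive rational factor, as the coefficient of an Eisenstein cohomology class defined over $\QQ$: rationality then follows from the evaluation map preserving the rational structure together with the explicit formula of Theorem \ref{thm:G-evaluation}.

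First I would pick an integer $c>1$ with $c\equiv 1\pmod{m}$, where $m$ is the order of $h$ in $T$ (for instance $c=m+1$, since $\gcd(m+1,m)=1$). This ensures simultaneously that $\gcd(c,m)=1$ and that $ch=h$. Setting $A=\ZZ[1/c]\subset\QQ$ and $\Gamma=\cO_h^{+,\times}$, the smoothing element
\[
\alpha_c := c^n\delta_0 - \sum_{d\in T[c]}\delta_d
\]
lies in $(A[T[c]]^0)^{\cO_F^{+,\times}}$, so by Definition \ref{def:Eisenstein-class} the class $\Eis_{\alpha_c}(h)\in H^{n-1}(\Gamma, R_A\otimes\lambda)$ has coefficients in $A$ by construction, and hence $\ev(\Eis_{\alpha_c}(h))\in (R_A)_\Gamma\subset (R_\CC)_\Gamma$.

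Next, Theorem \ref{thm:green-currents} provides the differential representative $c^n h^*\sG-\sum_{d\in T[c]}(h-d)^*\sG$. Applying Theorem \ref{thm:G-evaluation} to each torsion section, together with the classical distribution identity
\[
\sum_{d\in T[c]}\zeta(h-d,L,-k)=c^{-nk}\zeta(ch,L,-k)
\]
(which follows from the substitution $\beta=c\alpha$ in the Dirichlet series, using that coprimality of $c$ with $m$ makes the relevant stabilizers agree), should yield
\[
\exp_k^*\circ\ev(\Eis_{\alpha_c}(h))=(c^n-c^{-nk})\,\zeta(h,L,-k)\,z_1^{[k]}\cdots z_n^{[k]},
\]
after using $ch=h$. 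Because the left-hand side lies in $A\subset\QQ$ and the factor $c^n-c^{-nk}$ is strictly positive for $c>1$, $n\ge 1$, $k\ge 0$, division gives $\zeta(h,L,-k)\in\QQ$. The statement for Siegel's zeta follows from the identity $\zeta(\frb,\frf,-k)=\N\frb^k\zeta(1,\frf\frb^{-1},-k)$.

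The main obstacle in executing this plan is the stabilizer bookkeeping when applying Theorem \ref{thm:G-evaluation} to the individual translates $(h-d)^*\sG$: the full stabilizer $\cO_{h-d}^{+,\times}$ in $\cO_F^{+,\times}$ need not coincide with $\Gamma=\cO_h^{+,\times}$, so the sum must be reorganized by $\Gamma$-orbits on $T[c]$ (or one passes to a common finite-index subgroup and corestricts), and the distribution relation must be stated with matching stabilizer conventions. The choice $c\equiv 1\pmod{m}$ is made to make the resulting index corrections trivial and the final collapse $ch=h$ immediate.
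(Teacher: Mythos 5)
Your route is genuinely different from the paper's. The paper deduces Klingen--Siegel with no smoothing at all: over $A=\QQ$ the variant polylogarithm $\pol$ of Definition \ref{def:pol-variant} is available, and the class $\Eis(h)=\contr(h^*\pol)\in H^{n-1}(\Gamma,R_\QQ\otimes\lambda)$ is shown, via Theorem \ref{thm:green-current-variant} and $\contr\circ\mult=\id$, to be represented by $h^*\sG$ itself; Theorem \ref{thm:G-evaluation} and the subring principle then give $\zeta(h,L,-k)\in\QQ$ directly, with no distribution relation and no division by a smoothing factor. You instead use the degree-zero smoothed class $\Eis_{\alpha_c}(h)$ (the same mechanism the paper reserves for the Deligne--Ribet integrality statement, Corollary \ref{cor:Deligne-Ribet}), a classical distribution identity, and division by $c^n-c^{-nk}\neq 0$. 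Both work; yours trades the extra input ($\QQ$-algebra structure, the class $\varpi$) for an elementary smoothing-and-divide step, at the cost of the factor $c^n-c^{-nk}$ and the orbit/stabilizer bookkeeping you flag, and it buys nothing extra here since only rationality is claimed (the paper's smoothing argument is what yields the finer integrality).

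Two points need repair. First, with $A=\ZZ[1/c]$ and $c\equiv 1\bmod m$ (so $\gcd(c,m)=1$), the point $h$ of order $m>1$ does \emph{not} lie in $T^{(A)}$, so Definition \ref{def:Eisenstein-class} does not apply as stated: the canonical trivialization $\varrho_\can$ of Proposition \ref{prop:rho-can} is only defined at points whose order is invertible in $A$. You must enlarge to $A=\ZZ[\tfrac{1}{cm}]$ (or simply $A=\QQ$), which is harmless for rationality but undercuts the stated motive for choosing $A=\ZZ[1/c]$. Second, the per-translate identity $\sum_{d\in T[c]}\zeta(h-d,L,-k)=c^{-nk}\zeta(ch,L,-k)$ is false if each summand carries its own stabilizer $\cO_{h-d}^{+,\times}$, and Theorem \ref{thm:G-evaluation} is stated for the specific group $\cO_{h-d}^{+,\times}$, so the fix you sketch (common finite-index subgroup plus corestriction) introduces index factors that must be tracked. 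You can avoid this entirely inside your own framework: apply Corollary \ref{cor:Eisenstein-comparison} to the isogeny $L\hookrightarrow \tfrac{1}{c}L$ of degree $c^n$, giving $\Eis^k_{\alpha_c}(h)=c^n\Eis^k(h)-{\Eis'}^k(h)$, and note that for $c\equiv 1\bmod m$ one has $u h\equiv h\bmod \tfrac1c L \iff uh\equiv h\bmod L$, so both evaluations use the same $\Gamma=\cO_h^{+,\times}$ and Theorem \ref{thm:G-evaluation} applies verbatim, yielding (up to the sign $(-1)^{n-1}$ you dropped) exactly your formula with $\zeta(h,\tfrac1cL,-k)=c^{-nk}\zeta(h,L,-k)$. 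With these corrections your argument is sound.
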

\begin{proof}
Recall from Theorem \ref{thm:green-current-variant} that
the class from Definition \ref{def:Eisenstein-variant}
\[
\Eis(h)=\contr(h^*\pol)\in H^{n-1}(\Gamma,R_\QQ\otimes\lambda)
\]
is represented by $\contr (h^*(\varpi{\sG}+\eta)=)=\contr(\varpi (h^*\sG))$. By definition of
$\mult$ in \eqref{eq:mult-def} we have 
$ \varpi (h^*\sG)=\mult (h^*\sG)$, so that $\Eis(h)$ is 
represented by $h^*\sG$ (recall that $\contr\circ\mult=\id$).
It follows from Theorem \ref{thm:G-evaluation} and the above
principle that 
\[
\exp^*_k(\ev(\Eis(h))=(-1)^{n-1}\zeta(h,L,-k)z_1^{[k_1]}\cdots z_n^{[k_n]}
\]
has coefficients in $\QQ$.
\end{proof}
 
\begin{corollary}[Deligne-Ribet \cite{Deligne-Ribet}]\label{cor:Deligne-Ribet}
Let $\frc$ be an integral ideal coprime to
$\frf\frb^{-1}$. Then for $k\ge 0$ one has 
\[
(\N\frc)^{1+k}\zeta(\frb,\frf,-k)-\zeta(\frb\frc,\frf,-k)\in
\ZZ[\frac{1}{\N\frc}].
\]
\end{corollary}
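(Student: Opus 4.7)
The plan is to apply the isogeny-comparison formula of Corollary \ref{cor:Eisenstein-comparison} to carefully chosen lattices, evaluate both sides via Theorem \ref{thm:G-evaluation}, and exploit the fact that one side of the resulting identity is defined integrally over $A := \ZZ[1/\N\frc]$.

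For the setup, I would let $L := \frf\frb^{-1}$, $L' := \frf\frb^{-1}\frc^{-1}$, and $\varphi: L \hookrightarrow L'$ the natural inclusion. Since $\frc$ is an integral ideal coprime to $\frf\frb^{-1}$, this is an isogeny of degree $\deg\varphi = [L':L] = \N\frc$, and $L \otimes \QQ = L'\otimes\QQ$. The coset $D := L'/L$ is contained in $T(L)^{(A)}$ since $\N\frc$ is a unit in $A$. Take the section $h := 1 \in F\otimes\RR$, viewed as a torsion point of both $T(L)$ and $T(L')$; assuming $\frf \neq \cO_F$ (harmless, since the excluded case is standard), $h \neq 0$ in both tori and $h \notin D$. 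Let $\Gamma := \cO_h^{+,\times}$, which stabilizes $D$ and fixes $h$.

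The main step is to apply Corollary \ref{cor:Eisenstein-comparison} with $t = h$, obtaining
\[
\Eis_{\alpha_\varphi}^k(h) = \N\frc\,\Eis^k(h) - {\Eis'}^k(h)
\]
in $H^{n-1}(\Gamma, \TSym^k L_\QQ \otimes \lambda)$. The crucial point is that while the right-hand side is defined only after base-change to a $\QQ$-algebra, the left-hand side is the image of a class in $H^{n-1}(\Gamma, \TSym^k L_A \otimes \lambda)$ since $\alpha_\varphi \in A[D]^0$. Applying the evaluation map $\ev$ of Definition \ref{def:evaluation} and using the computation carried out in the proof of Corollary \ref{cor:Klingen-Siegel} (which invokes Theorem \ref{thm:G-evaluation}), we get
\[
\ev(\Eis^k(h)) = (-1)^{n-1}\zeta(1,L,-k)\,z_1^{[k]}\cdots z_n^{[k]},
\]
and analogously with $L'$ in place of $L$ (the same coordinates $z_1,\ldots,z_n$ apply because $L\otimes\RR = L'\otimes\RR$). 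Combining yields
\[
\ev\bigl(\Eis_{\alpha_\varphi}^k(h)\bigr) = (-1)^{n-1}\bigl[\N\frc\,\zeta(1,L,-k) - \zeta(1,L',-k)\bigr]\,z_1^{[k]}\cdots z_n^{[k]}.
\]
The commutative square of evaluation maps displayed immediately before Corollary \ref{cor:Klingen-Siegel} forces the scalar in brackets to lie in $A = \ZZ[1/\N\frc]$. Translating via the identities $\zeta(\frb,\frf,-k) = \N\frb^{k}\zeta(1,L,-k)$ and $\zeta(\frb\frc,\frf,-k) = \N\frb^{k}\N\frc^{k}\zeta(1,L',-k)$, and multiplying through by $\N\frb^{k}\N\frc^{k} \in \ZZ$, gives the claim.

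The main technical point to verify is that integrality of the cohomology class translates into integrality of the scalar coefficient of $z_1^{[k]}\cdots z_n^{[k]}$ after evaluation. This requires knowing that $z_1^{[k]}\cdots z_n^{[k]}$ is primitive in $\TSym^{nk}L_\ZZ$ and that its image in $(\TSym^{nk}L_\CC)_\Gamma$ is a nonzero rational multiple of the generator $w^k$ of $\CC[[w]]$ from Lemma \ref{lemma:coinvariants} — this is immediate from the identification $w^k \mapsto (k!)^n z_1^{[k]}\cdots z_n^{[k]}$ used at the end of the proof of Theorem \ref{thm:G-evaluation}. Everything else is formal.
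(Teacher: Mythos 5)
Your proposal is correct and is essentially the paper's own proof: the same isogeny $[\frc]\colon\frf\frb^{-1}\to\frf\frb^{-1}\frc^{-1}$ of degree $\N\frc$, the same application of Corollary \ref{cor:Eisenstein-comparison} at the torsion point $t=1$, evaluation via Theorem \ref{thm:G-evaluation} as in the proof of Corollary \ref{cor:Klingen-Siegel}, integrality from the commutative square over $A=\ZZ[\tfrac{1}{\N\frc}]$, and the final clearing of denominators by $\N(\frb\frc)^k$. Your extra remarks (excluding $\frf=\cO_F$, and the primitivity of $z_1^{[k]}\cdots z_n^{[k]}$) are additional bookkeeping on points the paper leaves implicit, not a different argument.
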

\begin{proof} Let $L:=\frf\frb^{-1}$.
We consider the isogeny $[\frc]:L\to L\frc^{-1}$ of degree $\N\frc$
and let $A:=\ZZ[\frac{1}{\N\frc}]$, so that
$\ker[\frc]\subset T^{(A)}$. Then the function
$\alpha_{[\frc]}\in A[\ker[\frc]]^0$
defined in \eqref{def:alpha-varphi-def} gives an element
\[
\Eis_{\alpha_{[\frc]}}(1)\in H^{n-1}(\Gamma,R_{\ZZ[\frac{1}{\N\frc}]}\otimes \lambda).
\]
From Corollary \ref{cor:Eisenstein-comparison} we get 
\[
\Eis^k_{\alpha_{[\frc]}}(1)=
(\N\frc)\Eis^k(1)-{\Eis'}^k([\frc](1))
\]
and from the proof of Corollary \ref{cor:Klingen-Siegel}
we deduce that 
\[
\ev(\Eis^k_{\alpha_{[\frc]}}(1))=(-1)^{n-1}\left((\N\frc)\zeta(1,\frf\frb^{-1},-k)-
\zeta(1,\frf\frb^{-1}\frc^{-1},-k)\right) z_1^{[k_1]}\cdots z_n^{[k_n]}
\]
has a coefficient in $\ZZ[\frac{1}{\N\frc}]$.
Multiplying by the integer $\N(\frb\frc)^k$ gives the result
as stated.
\end{proof}
Finally, we deduce the $p$-adic interpolation of the
zeta values. This was proven independently by 
Barsky \cite{Barsky}, Cassou-Nogu\`es \cite{Cassou-Nogues} and 
Deligne-Ribet \cite{Deligne-Ribet}.

Fix a prime number $p$, an integral ideal $\frc$ prime to $p$ and let $A=\Zp$. Recall from \ref{prop:R-as-Iwasawa} that 
$R_{\Zp}$ is isomorphic to the 
Iwasawa algebra $\Meas(L_{\Zp},\Zp)$. 
Consider the polynomial function $w^k:L_{\Zp}\to \Zp$ defined by
the element $w^k\in \Sym^kL_{\Zp}^*$, which maps 
$a_1\ell_1+\ldots +a_n\ell_n$ to $(a_1\cdots a_n)^k$. Then the moment map 
\begin{equation}
\mom^k:(R_{\Zp})_\Gamma\to (\TSym^kL_{\Zp})_\Gamma\isom \Zp
z_1^{[k_1]}\cdots z_n^{[k_n]}
\end{equation}
maps $\mu\mapsto \mu(w^k)z_1^{[k_1]}\cdots z_n^{[k_n]}$.
We keep the lattice $L=\frf\frb^{-1}$ and consider the
function $\alpha_{[\frc]}$ for the isogeny $[\frc]:L\to L\frc^{-1}$
as defined in Definition \ref{def:alpha-varphi-def}.
\begin{corollary}[$p$-adic interpolation]\label{cor:p-adic-interpolation}
With the above notations the element 
\[
\ev(\Eis_{\alpha_{[\frc]}})\in (R_{\Zp})_\Gamma
\]
is a measure whose value on $w^k$ is  
$(-1)^{n-1}\left((\N\frc)\zeta(1,\frf\frb^{-1},-k)-
\zeta(1,\frf\frb^{-1}\frc^{-1},-k)\right)$.
\end{corollary}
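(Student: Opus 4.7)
The plan is to reduce the statement to the computation already carried out in the proof of Corollary \ref{cor:Deligne-Ribet}, by identifying $(R_{\Zp})_\Gamma$ with a ring of $\Zp$-valued measures and observing that taking the $k$-th moment of the measure agrees with applying $\exp^*_k$ to the cohomology class before evaluation.

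First I would check that the class is $p$-integral. Since $\frc$ is prime to $p$, the degree $\N\frc$ is a unit in $\Zp$, so the function $\alpha_{[\frc]}$ from Definition \ref{def:alpha-varphi-def} lies in $\Zp[\ker[\frc]]^0$. Accordingly Definition \ref{def:Eisenstein-class} produces a class $\Eis_{\alpha_{[\frc]}} = \Eis_{\alpha_{[\frc]}}(1) \in H^{n-1}(\Gamma, R_{\Zp} \otimes \lambda)$, and its image under the evaluation map of Definition \ref{def:evaluation} lies in $(R_{\Zp})_\Gamma$. Via Proposition \ref{prop:R-as-Iwasawa} we identify $R_{\Zp} \cong \Zp[[L_{\Zp}]] \cong \Meas(L_{\Zp},\Zp)$, so $\ev(\Eis_{\alpha_{[\frc]}})$ is indeed a $\Zp$-valued measure on $L_{\Zp}$ (modulo the $\Gamma$-action).

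Next I would compare moments. By Definition \ref{def:moment-map} and Proposition \ref{prop:mom-as-moment-map}, the value of this measure on the polynomial $w^k$ equals the component of $\mom^k$ along $z_1^{[k]}\cdots z_n^{[k]}$. The moment map is the composition $R \xrightarrow{\exp^*} \widehat{\Gamma}L_A$, and is visibly natural in the coefficient ring and equivariant under $\Gamma$; hence it commutes with the evaluation map, giving
\[
\mom^k\bigl(\ev(\Eis_{\alpha_{[\frc]}})\bigr) = \ev\bigl(\exp^*_k(\Eis_{\alpha_{[\frc]}})\bigr) = \ev\bigl(\Eis^k_{\alpha_{[\frc]}}(1)\bigr),
\]
where the right-hand side is interpreted after the natural inclusion $R_{\Zp}\subset R_\CC$ (using the commutative diagram at the end of Section \ref{section:L-values}).

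Finally, the right-hand side has been computed in the proof of Corollary \ref{cor:Deligne-Ribet}: combining Corollary \ref{cor:Eisenstein-comparison} with Theorem \ref{thm:G-evaluation} gives
\[
\ev\bigl(\Eis^k_{\alpha_{[\frc]}}(1)\bigr) = (-1)^{n-1}\bigl((\N\frc)\zeta(1,\frf\frb^{-1},-k) - \zeta(1,\frf\frb^{-1}\frc^{-1},-k)\bigr)\,z_1^{[k]}\cdots z_n^{[k]},
\]
yielding the claimed moment formula. The main technical point is really the first step: one must confirm that all constructions (the trivialization $\varrho_\can$, the Eisenstein class, the evaluation map) go through with $\Zp$-coefficients because $\N\frc$ is a $p$-adic unit, and that the identification $R_{\Zp} \cong \Meas(L_{\Zp},\Zp)$ of Proposition \ref{prop:R-as-Iwasawa} is compatible with these operations so that the moment interpretation is valid. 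Once this $p$-integrality is in place, the interpolation formula is an immediate corollary of the complex computation.
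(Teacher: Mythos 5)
Your proposal is correct and follows essentially the same route as the paper, whose own proof consists precisely of the remarks preceding the corollary (the moment map sends a measure $\mu\in(R_{\Zp})_\Gamma$ to $\mu(w^k)\,z_1^{[k]}\cdots z_n^{[k]}$, and $\alpha_{[\frc]}$ has $\Zp$-coefficients because $\frc$ is prime to $p$) combined with the computation of $\ev(\Eis^k_{\alpha_{[\frc]}}(1))$ carried out in the proof of Corollary \ref{cor:Deligne-Ribet}. The only cosmetic point is that there is no natural inclusion $R_{\Zp}\subset R_\CC$: the comparison with the complex computation should be routed through the common coefficient ring $\ZZ[\frac{1}{\N\frc}]$, over which $\Eis_{\alpha_{[\frc]}}(1)$ is defined and which maps to both $\Zp$ and $\CC$ — which is exactly what the appeal to Corollary \ref{cor:Deligne-Ribet} accomplishes.
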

\begin{proof}
This is immediate from the above remarks and Corollary \ref{cor:Deligne-Ribet}.
\end{proof}

%
\subsection{Eisenstein distributions and measures}
%

In this section we let $A=\Zp$ so that we can identify
$R=\Zp[[L_{\Zp}]]$ (see Proposition \ref{prop:R-as-Iwasawa}).
Denote by $T^{(p)}:=T^{(\Zp)}$ the subgroup of $T^{\tors}$ of elements
of order prime to $p$. For any  
$\Zp$-module $M$ we consider the
$\Zp$-module 
\[
M[[\Gamma\backslash L_{\Zp}]]:=\Meas(\Gamma\backslash L_{\Zp},M):=\prolim_r M[\Gamma\backslash (L/p^rL)]
\]
of $M$-valued distributions on $\Gamma\backslash L_{\Zp}$. Here we have set
$M[\Gamma\backslash (L/p^rL)]:=M\otimes_{\Zp}\Zp[\Gamma\backslash (L/p^rL)]$. Note that these are measures in the ordinary sense, if 
$M$ is a finitely generated $\Zp$-module, otherwise these are just
distributions. 
\begin{proposition}
For every function $g\in (\Zp[T^{(p)}\smallsetminus \{0\}]^0)^{\Gamma}$
there is an $H^{n-1}(\Gamma, R\otimes \lambda)$-valued distribution
\[
\mu^g_{L,\Eis}\in \Meas(\Gamma\backslash L_{\Zp},H^{n-1}(\Gamma, R\otimes \lambda))
\]
on $\Gamma\backslash L_{\Zp}$.
\end{proposition}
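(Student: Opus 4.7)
The strategy is to construct, for each $r \geq 0$, a finite-level class $\mu_r^g \in H^{n-1}(\Gamma, R \otimes \lambda) \otimes_\Zp \Zp[\Gamma \backslash L/p^rL]$, and then check that the system $(\mu_r^g)_r$ is compatible under the natural projections; the collection then defines the desired element of $\prolim_r H^{n-1}(\Gamma, R \otimes \lambda)[\Gamma \backslash L/p^rL] = \Meas(\Gamma \backslash L_\Zp, H^{n-1}(\Gamma, R \otimes \lambda))$.

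For the construction at level $r$, set $C := T[p^r]$ and put $g_r := g \circ [p^r]$, the pullback of $g$ under the automorphism $[p^r]$ of $T^{(p)}$; this is supported on $D_r := [p^r]^{-1}(\text{supp}\, g) \subset T^{(p)} \setminus \{0\}$ and is $\Gamma$-invariant. Since $C$ consists of $p$-power torsion and $D_r$ of nonzero prime-to-$p$ torsion, one has $C \cap D_r = \emptyset$ with both sets $\Gamma$-stable, so the element $\mathbf{1}_C \otimes g_r \in (\Zp[C] \otimes \Zp[D_r]^0)^\Gamma$ is admissible input for Definition \ref{def:Eis-as-map}, yielding
\[
\Eis_{\mathbf{1}_C \otimes g_r} \in H^{n-1}(\Gamma, \sLog\mid_C \otimes \lambda).
\]
Now the functoriality map of Proposition \ref{prop:log-properties}(2) applied to the isogeny $[p^r] : L \to L$ gives $\sLog \to [p^r]^* \sLog$, and since $[p^r]$ sends $C$ to $0$, restriction to $C$ produces a $\Gamma$-equivariant morphism $\sLog\mid_C \to R \otimes_\Zp \Zp[C]$. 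Composing with the $\Gamma$-equivariant quotient $\Zp[C] \twoheadrightarrow \Zp[\Gamma \backslash C]$ (whose target is a trivial $\Gamma$-module, so factors out of $H^{n-1}(\Gamma, -)$), and using the canonical identification $\Gamma \backslash C = \Gamma \backslash T[p^r] \cong \Gamma \backslash L/p^rL$, I obtain $\mu_r^g$ as required.

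For the compatibility $\mu_r^g = \pi_*(\mu_{r+1}^g)$ under the projection induced by $L/p^{r+1}L \twoheadrightarrow L/p^rL$, I apply Corollary \ref{cor:Eis-trace-comp} to $\varphi := [p] : L \to L$. One has $[p]^{-1}(T[p^r]) = T[p^{r+1}]$, so $[p]^*(\mathbf{1}_{T[p^r]}) = \mathbf{1}_{T[p^{r+1}]}$, and since $[p]$ restricts to a bijection on $T^{(p)}$, $[p]_* g_{r+1} = g_r$. The corollary then yields
\[
\Tr_{[p]}\bigl(\Eis_{\mathbf{1}_{T[p^{r+1}]} \otimes g_{r+1}}\bigr) = \Eis_{\mathbf{1}_{T[p^r]} \otimes g_r}.
\]
The compatibility of the $\mu_r^g$ is then reduced to checking that the trivializations $\sLog\mid_{T[p^s]} \to R \otimes \Zp[T[p^s]]$ for $s = r,r+1$ fit into a commutative diagram with $\Tr_{[p]}$ and the projection $\Zp[\Gamma \backslash T[p^{r+1}]] \twoheadrightarrow \Zp[\Gamma \backslash T[p^r]]$; this follows from the factorization $[p^{r+1}] = [p^r] \circ [p]$ and the functoriality of the sheaf map in Proposition \ref{prop:log-properties}(2).

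The main obstacle is precisely this last compatibility, because the trivialization $\sLog\mid_{T[p^r]} \to R \otimes \Zp[T[p^r]]$ is not an isomorphism---the degree $p^{rn}$ of $[p^r]$ is not invertible in $\Zp$, so Proposition \ref{prop:log-properties}(3) does not apply---and one must take care to track the twist of the $\Gamma$-action induced by the non-canonical choice of lifts of $p$-power torsion points to $V$. Once this diagrammatic check is complete, the system $(\mu_r^g)$ assembles, via Proposition \ref{prop:R-as-Iwasawa}, into the claimed element $\mu_{L,\Eis}^g$ of $\Meas(\Gamma \backslash L_\Zp, H^{n-1}(\Gamma, R \otimes \lambda))$.
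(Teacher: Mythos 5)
Your construction is correct and is essentially the paper's own proof: the paper phrases level $r$ on the torus $T_{p^rL}=V/p^rL$ with $C=L/p^rL$ the kernel of the isogeny to $T_L$ and pushes forward by $\Tr_{[p^r]}$ into $\sLog_L\mid_{\{0\}}=R$, which under the canonical identification $T_L\isom T_{p^rL}$ is exactly your choice $C=T[p^r]$ together with the functoriality map for multiplication by $p^r$, and the compatibility in $r$ is obtained in both cases from Corollary \ref{cor:Eis-trace-comp}. The ``twist by non-canonical lifts'' you flag as the main obstacle does not actually arise: the map $\sLog\mid_{T[p^r]}\to R\otimes_{\Zp}\Zp[T[p^r]]$ is the canonical $\Gamma$-equivariant map $[p^r]_{\sLog}$ into the rigidified stalk $\sLog_0=R$ (no lifts to $V$ are chosen), and its compatibility with $\Tr_{[p]}$ is just the functoriality $[p^{r+1}]_{\sLog}=[p^r]_{\sLog}\circ[p]_{\sLog}$, so your remaining diagrammatic check is routine.
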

\begin{proof}
We are going to construct elements $\mu^g_{r,L,\Eis}\in M[\Gamma\backslash (L/p^rL)]$ in a compatible way. The distribution
$\mu^g_{r,L,\Eis}$ assigns to a $\Gamma$-invariant function
$f$ on $L/p^rL$ an element in $H^{n-1}(\Gamma, R\otimes \lambda)$.
This we define as follows. The isogeny $[p^r]:T_{p^rL}\to T_L$
associated to the inclusion $p^rL\subset L$ yields an isomorphism
$[p^r]:T_{p^rL}^{(p)}\isom T_L^{(p)}$, which allows to consider
the function $g\in (\Zp[T_L^{(p)}\smallsetminus \{0\}]^0)^{\Gamma}$
as an element $g_r\in (\Zp[T_{p^rL}^{(p)}\smallsetminus \{0\}]^0)^{\Gamma}$. Then Definition \ref{def:Eis-as-map} gives
an element
\[
\Eis(f\otimes g_r)\in H^{n-1}(\Gamma,\sLog_{p^rL}\mid_{L/p^rL}\otimes\lambda_{p^rL}),
\]
where we view $L/p^rL\subset T_{p^rL}$ is the kernel of the isogeny
$[p^r]$. The trace map associated to $[p^r]$ induces
\[
\Tr_{[p^r]}:H^{n-1}(\Gamma,\sLog_{p^rL}\mid_{L/p^rL}\otimes\lambda_{p^rL})\to
H^{n-1}(\Gamma,\sLog_ {L}\mid_{\{0\}}\otimes\lambda_{L})=
H^{n-1}(\Gamma, R\otimes\lambda)
\]
and we define 
\[
\mu^g_{r,L,\Eis}(f):=\Tr_{[p^r]}(\Eis(f\otimes g_r)).
\]
As $g_r=[p^r]^*(g)$ it follows from Corollary \ref{cor:Eis-trace-comp}
that the $\mu^g_{r,L,\Eis}(f)$ indeed define a distribution
on $\Gamma\backslash L_{\Zp}$.
\end{proof}
\begin{proposition} Recall that $R=\Zp[[L_{\Zp}]]$.
There is a canonical homomorphism
\[
H^{n-1}(\Gamma,\Zp[[L_{\Zp}]]\otimes \lambda)\to
\Meas(\Gamma\backslash L_{\Zp}, H^{n-1}(\Gamma,\Zp\otimes \lambda)).
\]
\end{proposition}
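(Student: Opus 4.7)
The strategy is to use the identification $R=\Zp[[L_\Zp]]\isom\Meas(L_\Zp,\Zp)$ from Proposition \ref{prop:R-as-Iwasawa} and to produce the desired measure by pairing cohomology classes with continuous $\Gamma$-invariant functions on $L_\Zp$. The key point is that the identification with the measure algebra is $\Gamma$-equivariant: the action of $\Gamma$ on $R$ comes from functoriality of the group ring in $L$, while the action on $\Meas(L_\Zp,\Zp)$ is induced by the action on $L_\Zp$, and these match by the construction of Proposition \ref{prop:R-as-Iwasawa}. Hence, for every continuous $\Gamma$-invariant function $f\in\sC(L_\Zp,\Zp)^\Gamma = \sC(\Gamma\backslash L_\Zp,\Zp)$, the evaluation
\[
\ev_f\colon R\to\Zp,\qquad \mu\mapsto\mu(f),
\]
is a $\Gamma$-equivariant $\Zp$-linear map (with $\Gamma$ acting trivially on $\Zp$).

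By functoriality, $\ev_f$ induces a map $(\ev_f)_*\colon H^{n-1}(\Gamma,R\otimes\lambda)\to H^{n-1}(\Gamma,\Zp\otimes\lambda)$ that is $\Zp$-linear in $f$. For each class $c\in H^{n-1}(\Gamma,R\otimes\lambda)$, I would then define $\mu_c(f):=(\ev_f)_*(c)$, obtaining a $\Zp$-linear map $\mu_c\colon\sC(\Gamma\backslash L_\Zp,\Zp)\to H^{n-1}(\Gamma,\Zp\otimes\lambda)$. The assignment $c\mapsto\mu_c$ is manifestly $\Zp$-linear.

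To identify $\mu_c$ with an element of $\Meas(\Gamma\backslash L_\Zp,H^{n-1}(\Gamma,\Zp\otimes\lambda))$ in the projective-limit form given in the text, I would argue level by level. Using $R=\prolim_r\Zp[L/p^rL]$ as a projective system of finite rank $\Zp$-modules, the class $c$ is represented by a compatible family $c_r\in H^{n-1}(\Gamma,\Zp[L/p^rL]\otimes\lambda)$. The restriction of $\mu_c$ to functions factoring through $L/p^rL$ depends only on $c_r$, and since the finite set $L/p^rL$ has only finite $\Gamma$-orbits, the finite free $\Zp$-module of $\Gamma$-invariant functions on $L/p^rL$ is dually presented by $\Zp[\Gamma\backslash(L/p^rL)]$. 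Thus the datum $f\mapsto(\ev_f)_*(c_r)$ corresponds to an element of $H^{n-1}(\Gamma,\Zp\otimes\lambda)\otimes_\Zp\Zp[\Gamma\backslash(L/p^rL)]$, and compatibility as $r$ varies is automatic from the inverse system structure on $R$.

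The main technical point is the $\Gamma$-equivariance of the identification $R\isom\Meas(L_\Zp,\Zp)$ at each finite level together with the verification that the level-$r$ pairing lands in the claimed finite-rank module; both are essentially tautological once one unwinds the definition of the Iwasawa algebra as measures and the fact that $\Gamma$-orbits in $L/p^rL$ are finite. The resulting homomorphism is canonical, and composing it with the distribution $\mu^g_{L,\Eis}$ of the previous proposition is what will produce the $H^{n-1}(\Gamma,\Zp\otimes\lambda)$-valued $p$-adic measures encoding the interpolation of $L$-values.
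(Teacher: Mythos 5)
Your proposal is correct and follows essentially the same route as the paper: the paper packages your evaluation pairing as the cup product $H^0(\Gamma,\Zp[L/p^rL])\otimes H^{n-1}(\Gamma,\Zp[L/p^rL]\otimes\lambda)\to H^{n-1}(\Gamma,\Zp\otimes\lambda)$ at each finite level, composes with the projection $H^{n-1}(\Gamma,\Zp[[L_\Zp]]\otimes\lambda)\to H^{n-1}(\Gamma,\Zp[L/p^rL]\otimes\lambda)$, and passes to the projective limit, which is exactly your level-by-level argument with $\Gamma$-invariant functions dual to $\Zp[\Gamma\backslash(L/p^rL)]$.
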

\begin{proof} 
The pairing between distributions and functions on $L/p^rL$ is
a map
\[
\Zp[L/p^rL]\times\Zp[L/p^rL] \to \Zp
\]
so that the
cup-product
defines a pairing
\[
H^0(\Gamma,  \Zp[L/p^rL])\otimes_{\Zp}
H^{n-1}(\Gamma,\Zp[L/p^rL]\otimes \lambda )\to 
H^{n-1}(\Gamma,\Zp\otimes\lambda)
\]
and hence a homomorphism
\begin{equation*}\label{eq:measure-map}
H^{n-1}(\Gamma,\Zp[L/p^rL]\otimes \lambda )\to
H^{n-1}(\Gamma,\Zp\otimes\lambda)\otimes_{\Zp}\Zp[\Gamma\backslash(L/p^rL)].
\end{equation*}
Composing this homomorphism with the projection
\[
H^{n-1}(\Gamma,\Zp[[L_{\Zp}]]\otimes \lambda)\to
H^{n-1}(\Gamma,\Zp[L/p^rL]\otimes \lambda )
\]
and passing to the projective limit give the desired homomorphism.
\end{proof}
Using this proposition the Eisenstein distribution gives rise
to an element in
\begin{equation}
\Meas(L_{\Zp}/\Gamma, \Meas(\Gamma\backslash L_{\Zp}, H^{n-1}(\Gamma,\Zp\otimes \lambda)))\isom
\Meas(\Gamma\backslash L_{\Zp}\times \Gamma\backslash L_{\Zp},H^{n-1}(\Gamma,\Zp\otimes \lambda))
\end{equation}
where the isomorphism comes from the Fubini theorem about
integration on a product space. Note that in the case where 
$\Gamma\subset \Aut(L)$ is an arithmetic subgroup, the $\Zp$-module
$H^{n-1}(\Gamma,\Zp\otimes \lambda)$ is finitely generated, so that
the Eisenstein distribution becomes a measure on $\Gamma\backslash L_{\Zp}\times \Gamma\backslash L_{\Zp}$.
 
The next theorem 
shows that the Eisenstein distribution does not give anything
essentially new.
\begin{theorem}For any $g\in (\Zp[T^{(p)}\smallsetminus \{0\}]^0)^{\Gamma}$ the Eisenstein measure 
\[
\mu_{L,\Eis}^g\in\Meas(\Gamma\backslash L_{\Zp}\times \Gamma\backslash L_{\Zp},H^{n-1}(\Gamma,\Zp\otimes \lambda))
\]
is supported on the diagonal $\Gamma\backslash L_{\Zp}\xrightarrow{\Delta}\Gamma\backslash L_{\Zp}\times \Gamma\backslash L_{\Zp}$.
\end{theorem}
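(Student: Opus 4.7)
The plan is to exhibit $\mu^g_{L,\Eis}$ as the pushforward $\Delta_*\nu^g$ of the auxiliary measure $\nu^g := \mathrm{aug}_*(\mu^g_{L,\Eis}) \in \Meas(\Gamma\backslash L_{\Zp}, H^{n-1}(\Gamma,\Zp\otimes\lambda))$, where $\mathrm{aug}\colon R=\Zp[[L_{\Zp}]]\twoheadrightarrow\Zp$ is the augmentation. Concretely, at any common level $r$ and for $f,\phi\in\Zp[\Gamma\backslash(L/p^rL)]$, the goal is the identity
$$
\mu^g_{L,\Eis}(f,\phi)=\mathrm{aug}\bigl(\mu^g_{r,L,\Eis}(f\phi)\bigr),
$$
whose right-hand side depends only on the pointwise product $f\phi$, which is exactly the diagonal-support condition.

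The key input is a stalk computation for the composite
$$
\sLog_{p^rL}\mid_{L/p^rL}\xrightarrow{\Tr_{[p^r]}}\sLog_L\mid_0=R_L\twoheadrightarrow\Zp[L/p^rL].
$$
By Proposition~\ref{prop:log-properties}~(2), the sheaf map $\sLog_{p^rL}\to[p^r]^*\sLog_L$ is induced from the algebra homomorphism $\varphi_R\colon R_{p^rL}\to R_L$ coming from $p^rL\subset L$; combined with the fiberwise summation that defines the trace over $0\in T_L$, the stalk at $c\in L/p^rL$ sends $x\mapsto\delta_{\tilde c}\cdot\varphi_R(x)\in R_L$, where $\tilde c\in L$ is any lift of $c$. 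Modulo the kernel of $R_L\twoheadrightarrow\Zp[L/p^rL]$, one has $\delta_{\tilde c}\mapsto\delta_c$, and $\varphi_R$ factors through the augmentation of $R_{p^rL}$: every $\delta_{\ell'}$ with $\ell'\in p^rL$ maps to $1$ in $\Zp[L/p^rL]$. Thus the composite equals the stalkwise augmentation $\sLog_{p^rL}\mid_c\twoheadrightarrow\Zp$ followed by the inclusion $1\mapsto\delta_c$.

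Applying this to $\Eis(f\otimes g_r)=\sum_{c\in L/p^rL}f(c)\,c^*\pol_{g_r}$, the image of $\mu^g_{r,L,\Eis}(f)$ in $H^{n-1}(\Gamma,\Zp[L/p^rL]\otimes\lambda)$ is represented by $\sum_c f(c)\,\delta_c\cdot\mathrm{aug}(c^*\pol_{g_r})$. The cup product with $\phi$ followed by the Kronecker pairing $\delta_c\otimes\delta_{c'}\mapsto\delta_{c,c'}$ collapses this to $\sum_c f(c)\phi(c)\,\mathrm{aug}(c^*\pol_{g_r})$. The same expression is obtained by applying $\mathrm{aug}\circ\Tr_{[p^r]}$ directly to $\Eis(f\phi\otimes g_r)$, since $\mathrm{aug}(\delta_{\tilde c})=1$ and $\mathrm{aug}\circ\varphi_R=\mathrm{aug}$. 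This yields the identity at each finite level, hence in the projective limit.

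The main bookkeeping obstacle is $\Gamma$-equivariance: the action of $\Gamma$ on $L/p^rL$ is nontrivial, so the sums ``$\sum_c$'' should be read via Shapiro's lemma as sums over orbits $[c]\in\Gamma\backslash(L/p^rL)$ of corestrictions $\mathrm{cor}_{\Gamma_c}^{\Gamma}$ from stabilizers, and the identifications in the stalk calculation must be matched consistently with this decomposition. Since $f$, $\phi$, and $f\phi$ are all $\Gamma$-invariant, the formulas descend coherently and the identity is preserved.
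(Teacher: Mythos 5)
Your argument is correct and is essentially the paper's own proof: both reduce to a finite level $r$ and show that the composite of $\Tr_{[p^r]}$ with $R_L\twoheadrightarrow \Zp[L/p^rL]$ carries the stalk of $\sLog_{p^rL}$ at $c$ into $\Zp\,\delta_c$ via the augmentation (the paper phrases exactly this as $t^*\sLog'\isom\Meas(t+L'_{\Zp},A_r)$ mapping onto $A_r\delta_t$), so the pairing in the second variable only detects the diagonal. Your packaging of the conclusion as $\mu^g_{L,\Eis}=\Delta_*(\mathrm{aug}_*\mu^g_{L,\Eis})$ and your orbit/corestriction treatment of a nontrivial $\Gamma$-action on $L/p^rL$ (where the paper simply assumes the action is trivial) are cosmetic refinements of the same computation.
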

\begin{proof}
The proof is formal and one has just to unravel the definition
of $\mu_{L,\Eis}^g$. It certainly suffices to show this
for $\mu^g_{r,L,\Eis}$, i.e., to work with $L_r:=L/p^rL$ and $A_r:=\ZZ/p^r\ZZ$. 
Let $L':=p^rL$ and
$t\in L$. Then $t$ defines a $p^r$-torsion point on $T'$
because this group is just $L_r$. We assume in this proof that
$\Gamma$ acts trivially on $L_r$ otherwise one has to replace
$t$ by some linear combination of $p^r$-torsion sections.
We consider the Eisenstein measure 
\[
\mu^g_{r,L,\Eis}:A_r[L_r]^\Gamma\to 
H^{n-1}(\Gamma, A_r[L_r]\otimes \lambda)
\]
as a map from the $\Gamma$-invariant functions on $L_r$ to 
$H^{n-1}(\Gamma, A_r[L_r]\otimes \lambda)$. By its construction $\mu^g_{r,L,\Eis}$
evaluated at $\delta_t\in A_r[L_r]$ is given by 
$\Tr_{[p^r]}(\Eis(\delta_t\otimes [p^r]^*(g)))$, where $[p^r]$ is the isogeny $[p^r]:L'\to L$. We have
\[
\Eis(\delta_t\otimes [p^r]^*(g))
\in H^{n-1}(\Gamma, t^*\sLog'\otimes \lambda').
\]
We claim that $t^*\sLog'\isom \Meas(t+L'_{\Zp},A_r)$. This follows
because $\sLog'\isom \pi_!A_r[L']\otimes_{A_r}R'$ by the construction
of $\sLog'$ and because $t^*\pi_!A_r[L']=A_r[t+L']$, where
we denote by $A_r[t+L']$ the free $A_r$-module on $t+L'$. Taking
the tensor product with $R$ completes this $\Zp[L']$-module and
gives the desired isomorphism.

The isogeny $[p^r]$ induces a map $[p^r]_\sLog:t^*\sLog'\to 0^*\sLog$ which identifies with 
\[
[p^r]_*:\Meas(t+L'_{\Zp},A_r)\to \Meas(L_{\Zp},A_r).
\]
If we compose this with $\Meas(L_{\Zp},A_r)\to \Meas(L_r,A_r)=A_r[L_r]$ we see that the image of 
$\Meas(t+L'_{\Zp},A_r)$ in $A_r[L_r]$ is given by $A_r\delta_t$.
In particular, if we consider the pairing
\[
A_r[L_r]^\Gamma\otimes H^{n-1}(\Gamma, A_r[L_r]\otimes \lambda)
\to H^{n-1}(\Gamma, A_r\otimes\lambda)
\]
we see that $\delta_s\otimes \mu^g_{r,L,\Eis}(\delta_t)\mapsto 0$
for $s\neq t$. If we rewrite the Eisenstein measure 
as 
\[
\mu^g_{r,L,\Eis}:A_r[L_r]^\Gamma\times A_r[L_r]^\Gamma
\to H^{n-1}(\Gamma, A_r\otimes \lambda)
\]
this just means that $\mu^g_{r,L,\Eis}(\delta_t\otimes\delta_{s})=0$ for $t\neq s$,
i.e., that $\mu^g_{r,L,\Eis}$ is supported on the diagonal.
\end{proof}
\begin{corollary}
The Eisenstein measure 
$\mu_{L,\Eis}^g\in \Meas(\Gamma\backslash L_{\Zp},H^{n-1}(\Gamma,A[[L_{\Zp}]]\otimes \lambda))$
is completely determined by its image 
\[
\overline{\mu}_{L,\Eis}^g\in \Meas(\Gamma\backslash L_{\Zp},H^{n-1}(\Gamma,A\otimes \lambda))
\]
under the augmentation map $A[[L_{\Zp}]]\to A$. It is
also completely determined by its value on the constant function
$1$ on $L_{\Zp}/\Gamma$:
\[
\mu_{L,\Eis}^g(1)\in H^{n-1}(\Gamma,A[[L_{\Zp}]]\otimes \lambda).
\]
\end{corollary}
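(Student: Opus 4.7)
The plan is to exploit directly the diagonal-support result of the preceding theorem. Post-composing $\mu_{L,\Eis}^g$ with the canonical homomorphism $H^{n-1}(\Gamma, A[[L_{\Zp}]] \otimes \lambda) \to \Meas(\Gamma\backslash L_{\Zp}, H^{n-1}(\Gamma, A \otimes \lambda))$ and applying the Fubini isomorphism displayed just before the theorem, one views $\mu_{L,\Eis}^g$ as a measure (or distribution) on the product $\Gamma\backslash L_{\Zp} \times \Gamma\backslash L_{\Zp}$ with values in $H^{n-1}(\Gamma, A \otimes \lambda)$. By the preceding theorem this product measure is supported on the diagonal, so it is the push-forward under $\Delta$ of a unique $\nu \in \Meas(\Gamma\backslash L_{\Zp}, H^{n-1}(\Gamma, A \otimes \lambda))$; that is,
\[
\mu_{L,\Eis}^g(f \otimes h) = \nu(f h)
\]
for all $\Gamma$-invariant functions $f,h$ on $L_{\Zp}$.

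The next step is to identify $\nu$ with each of the two avatars named in the statement. The augmentation $A[[L_{\Zp}]] \to A$ corresponds, under the canonical homomorphism, to integration against the constant function $1$ in the inner variable; hence
\[
\overline{\mu}_{L,\Eis}^g(f) \;=\; \mu_{L,\Eis}^g(f \otimes 1) \;=\; \nu(f),
\]
so $\nu = \overline{\mu}_{L,\Eis}^g$. Symmetrically, the image of $\mu_{L,\Eis}^g(1) \in H^{n-1}(\Gamma, A[[L_{\Zp}]] \otimes \lambda)$ under the canonical homomorphism, regarded as a measure on $\Gamma\backslash L_{\Zp}$, sends $h$ to $\mu_{L,\Eis}^g(1 \otimes h) = \nu(h)$, so it likewise coincides with $\nu$. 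The formula $\mu_{L,\Eis}^g(f \otimes h) = \overline{\mu}_{L,\Eis}^g(f h)$ now recovers the 2-variable measure, and therefore $\mu_{L,\Eis}^g$ itself via Fubini, from either $\overline{\mu}_{L,\Eis}^g$ or $\mu_{L,\Eis}^g(1)$.

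The argument is essentially formal once the diagonal-support property is in hand; the main nuisance is book-keeping the chain of identifications (Fubini, the canonical homomorphism, and the augmentation-versus-constant-function-$1$ correspondence) so that the two equalities $\nu = \overline{\mu}_{L,\Eis}^g$ and $\nu = \mu_{L,\Eis}^g(1)$ are interpreted on the same side of the canonical map. In the arithmetic case, where $H^{n-1}(\Gamma, A \otimes \lambda)$ is finitely generated and the Fubini map is genuinely an isomorphism of measure spaces, this bookkeeping presents no obstacle.
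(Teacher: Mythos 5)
Your argument is correct and is essentially the paper's own proof: the paper simply remarks that the corollary ``is just a reformulation of the theorem,'' and your unpacking --- diagonal support gives $\mu_{L,\Eis}^g(f\otimes h)=\nu(fh)$, with $\nu$ recovered from either $\overline{\mu}_{L,\Eis}^g$ (take $h=1$) or $\mu_{L,\Eis}^g(1)$ (take $f=1$) --- is exactly the bookkeeping that reformulation amounts to. No genuinely different route is taken, only a more explicit spelling-out of the same formal step.
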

\begin{proof}
This is just a reformulation of the theorem.
\end{proof}
In the case of totally real fields, this has the following
consequence. 
\begin{corollary}
Let $F$ be a totally real field with ring of integers $\cO_F$
and $\Gamma\subset \cO_F^{+,\times}$ a subgroup of finite index
of the totally positive units of $\cO_F$. Let $L=\frf\frb^{-1}$
be a fractional ideal. Then the
measure 
\[
\ev(\Eis_{\alpha_{[\frc]}})\in (R_{\Zp})_\Gamma
\]
from Corollary \ref{cor:p-adic-interpolation} coincides
with the measure $\overline{\mu}^{\alpha_{[\frc]}}_{L,\Eis}$.
\end{corollary}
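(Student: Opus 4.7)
The plan is to identify both objects as elements of $\Meas(\Gamma\backslash L_{\Zp},\Zp)$ and check they agree on a dense family of test functions. Under the isomorphism $R_\Gamma \cong \Meas(\Gamma\backslash L_{\Zp},\Zp)$ coming from Proposition \ref{prop:R-as-Iwasawa} and the identification $H^{n-1}(\Gamma,\Zp\otimes\lambda)\cong\Zp$ provided by the cap product with the fundamental class of $(F\otimes\RR)^1/\Gamma$, both $\ev(\Eis_{\alpha_{[\frc]}})$ and $\overline{\mu}^{\alpha_{[\frc]}}_{L,\Eis}$ become $\Zp$-valued measures on $\Gamma\backslash L_{\Zp}$. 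It suffices to compare their values on indicator functions $\delta_{\bar t}$ of cosets $\bar t\in \Gamma\backslash(L/p^rL)$ as $r$ varies, since these span a dense subspace of the continuous $\Gamma$-invariant functions on $L_{\Zp}$.

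First I would unpack the right-hand side. By construction $\overline{\mu}^{\alpha_{[\frc]}}_{r,L,\Eis}(\delta_{\bar t})$ is obtained by applying first the trace map $\Tr_{[p^r]}$, then the augmentation $R\to \Zp$ on coefficients, and finally $\ev$, to the Eisenstein class $\Eis(\delta_{\bar t}\otimes [p^r]^*\alpha_{[\frc]})\in H^{n-1}(\Gamma,\sLog_{p^r L}|_{L/p^r L}\otimes\lambda)$. The composite of the trace with the augmentation on $\sLog_{p^rL}$-coefficients corresponds geometrically to the topological pushforward of the constant sheaf $\underline{\Zp}$ under the finite \'etale map $[p^r]$, which over the discrete fiber $L/p^rL$ reduces to summation over cosets. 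Thus the right-hand side computes a cohomological quantity determined by the value of $\pol_{[p^r]^*\alpha_{[\frc]}}$ along the $\bar t$-section.

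Second I would unpack the left-hand side. Via Definition \ref{def:moment-map} and Proposition \ref{prop:mom-as-moment-map}, pairing $\ev(\Eis_{\alpha_{[\frc]}})\in R_\Gamma$ with $\delta_{\bar t}$ amounts to extracting the coefficient indexed by $\bar t$ of $\Eis_{\alpha_{[\frc]}}$ modulo $I^{r+1}$; under the presentation $\Log = V\times^L R$ of the logarithm sheaf, this coefficient is precisely the pullback of $\pol_{\alpha_{[\frc]}}$ along the torsion section corresponding to $\bar t$, trivialized by $\varrho_{\mathrm{can}}$ and then augmented. The trace compatibility of the polylogarithm (Corollary \ref{cor:trace-compatibility}) for the isogeny $[p^r]$ then identifies this restriction with the sum over $c\in L/p^r L$ of the weighted translates $c^*\pol_{[p^r]^*\alpha_{[\frc]}}$ pushed forward to level $L$, which by the first step equals the right-hand side.

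The hard part will be keeping the bookkeeping straight: one must juggle the three roles of $R$ (as formal group ring, as Iwasawa algebra of measures, and as cohomology coefficients), and track how the trace map $\Tr_{[p^r]}$ interacts with both the cap-product evaluation $\ev$ and the augmentation $R\to\Zp$. Once one observes that the diagonal-support theorem just proved reduces the distribution $\mu^g_{L,\Eis}$ to a single-variable object determined by its total mass $\mu^g_{L,\Eis}(1)\in H^{n-1}(\Gamma,R\otimes\lambda)$, the remaining matching of coefficients becomes formal: the total mass is precisely $\Eis_{\alpha_{[\frc]}}$ by the limit construction, and applying $\ev$ to both sides yields the claimed equality.
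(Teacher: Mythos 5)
The paper itself offers no proof of this corollary: it is stated as an immediate consequence of the diagonal-support theorem and of the corollary preceding it (the measure is determined by its value on the constant function $1$), so your closing paragraph --- reduce everything to the total mass $\mu^{g}_{L,\Eis}(1)$ and then apply $\ev$ --- is indeed the intended skeleton. However, your middle steps do not work over $\Zp$ as written. You propose to read off the coefficient of $\Eis_{\alpha_{[\frc]}}$ on a coset $\bar t$ of $p^rL_{\Zp}$ as a pullback of $\pol_{\alpha_{[\frc]}}$ along ``the torsion section corresponding to $\bar t$, trivialized by $\varrho_\can$''. Those sections are $p$-power torsion translates, hence do not lie in $T^{(\Zp)}$, so $\varrho_\can$ (Proposition \ref{prop:rho-can}) is unavailable there; for the same reason $\deg[p^r]$ is not invertible in $\Zp$ and $\sLog_{p^rL}$ cannot be identified with $[p^r]^*\sLog_{L}$ (Proposition \ref{prop:log-properties}(3) does not apply). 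The correct mechanism, which is exactly what the paper's proof of the diagonal-support theorem uses, is the stalk identification $t^*\sLog'\isom\Meas(t+p^rL_{\Zp},\cdot)$ together with base change for the finite map $[p^r]$: coefficient extraction at $\bar t$ is restriction of the measure-valued class to that coset, not evaluation of $\pol$ at a $p$-power torsion point of $T_L$.

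The more serious gap is that the crux, ``the total mass is precisely $\Eis_{\alpha_{[\frc]}}$ by the limit construction'', is asserted rather than proved, and taken literally it is false as stated. Corollary \ref{cor:Eis-trace-comp} (with $C=\{0\}$, $C'=L/p^rL$) gives $\mu^{g}_{r,L,\Eis}(1)=\Tr_{[p^r]}(\Eis(1\otimes g_r))=\Eis_{g}(0)=0^*\pol_g$, i.e.\ evaluation at the point $0$, whereas Corollary \ref{cor:p-adic-interpolation} concerns $\Eis_{\alpha_{[\frc]}}(1)=1^*\pol_{\alpha_{[\frc]}}$. Moreover $\alpha_{[\frc]}$ has $0$ in its support (with coefficient $\N\frc-1$), so it does not satisfy the hypothesis $g\in(\Zp[T^{(p)}\smallsetminus\{0\}]^0)^\Gamma$ of the measure construction, and $0^*\pol_{\alpha_{[\frc]}}$ is not even defined. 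To make the two sides match one must translate by the $\Gamma$-fixed prime-to-$p$ torsion point $1$ --- run the construction with the translated function supported on $1+\ker[\frc]$, equivalently along the sections $1+c$, $c\in L/p^rL$ --- after which Corollary \ref{cor:Eis-trace-comp} plus $\Gamma$-equivariance of this translation identifies the total mass with $\Eis_{\alpha_{[\frc]}}(1)$, and the diagonal-support theorem together with compatibility of $\ev$ with the coefficient pairing finishes the proof. This base-point and support bookkeeping is the only content of the corollary beyond the already-proved theorem; your proposal leaves it untouched (admittedly the paper, which gives no proof, glosses over the same point).
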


%
\subsection{Relation with Eisenstein cohomology on Hilbert modular
varieties}
%
In this section we explain a construction of 
Graf which uses the topological polylogarithm
to get the Eisenstein cohomology on Hilbert modular varieties in all cohomological degrees. For more details and the actual comparison 
with Harder's Eisenstein cohomology we refer to the forthcoming thesis
of Graf \cite{Graf}. We present here a slight variant
of his results and we 
will deduce from our general principles 
some integrality and $p$-adic interpolation properties of the
Eisenstein cohomology classes.

Let again $F/\QQ$ be a totally real field of degree $n$ with
ring of integers $\cO_F$. We define for any fractional
ideal $\fra$ of $F$ the group
\[
\GL_2^+(\cO_F,\fra):=\{\begin{pmatrix} a & b\\ c& d\end{pmatrix}\in \GL_2(F)\mid  a, d\in\cO_F, b\in \fra, c\in \fra^{-1}, ad-bc\in \cO_F^{+,\times}\}.
\]
We identify the centre of $\GL_2^+(\cO_F,\fra)$ 
with $\cO_F^\times$ 
and we let $\PGL_2(\cO_F,\fra)$ be the quotient.
The group $\GL_2^+(\cO_F,\fra)$ acts on $(F\otimes \RR)^2\isom F\otimes \CC$ from the right and stabilizes the lattice
\[
L:=\cO_F \cdot 1+\fra\cdot \sqrt{-1}\subset F\otimes \CC.
\]
We consider the torus $T:=F\otimes \CC/L$ of real dimension
$2n$ and an integer $N>1$ which is invertible in $A$. 
We let 
\begin{equation}
D:=T[N]\setminus\{0\}\subset T
\end{equation}
be the $N$-torsion subgroup without the $0$-section and
denote by  $\Gamma\subset\GL_2^+(\cO_F,\fra)$
the stabilizer of $D$. Let $\Delta:=\Gamma\cap \cO_F^\times$ be the intersection of $\Gamma$ with the centre. Then
$\Gamma\subset \GL_2^+(\cO_F,\fra)$ and 
$\Delta\subset\cO_F^\times $ are subgroups of finite index and we 
define
$\Gamma':=\Gamma/\Delta\subset \PGL_2(\cO_F,\fra)$, so that 
we have an exact sequence
\[
0\to\Delta\to \Gamma\to \Gamma'\to 0.
\]
\begin{remark}
To have a geometric perspective on this, we define $\GL_2^+(F\otimes\RR):=\{(\omega_1,\omega_2)\in (F\otimes \CC)^2\mid \Im(\frac{\omega_2}{\omega_1})>0\}$. 
Then $\left(\begin{smallmatrix} a & b\\ c& d\end{smallmatrix}\right)\in \Gamma$
acts on $(\omega_1,\omega_2)\in\GL_2^+(F\otimes\RR)$ by right
multiplication $(\omega_1,\omega_2)\left(\begin{smallmatrix} a & b\\ c& d\end{smallmatrix}\right)$ and $\lambda\in (F\otimes \CC)^\times$ acts
by left multiplication $\lambda(\omega_1,\omega_2)=(\lambda\omega_1,\lambda\omega_2)$.
The map $(\omega_1,\omega_2)\mapsto \tau:=\frac{\omega_2}{\omega_1}$
identifies the quotient $(F\otimes \CC)^\times\backslash \GL_2^+(F\otimes \RR)$ with the upper half plane 
$F\otimes\HH:=\{\tau\in F\otimes \CC\mid \Im\tau \mbox{ totally positive}\}$. Note that the map 
$\GL_2^+(F\otimes \RR)\to F\otimes\HH$ is compatible with
the homomorphism $\Gamma\to \Gamma'$.
\end{remark}

The  Eisenstein class of Definition \ref{def:Eisenstein-special-case}
provides us for any ring $A$ in which $N$ is invertible with a map
\begin{equation}\label{eq:Eis-map}
\Eis:(A[D]^0)^\Gamma\to H^{2n-1}(\Gamma, R\otimes\lambda). 
\end{equation}
We explain how to get cohomology classes in other degrees
starting from this class. Consider the Hochschild-Serre spectral sequence
\[
H^{2n-1-p}(\Gamma', H^p(\Delta, R\otimes \lambda))\Rightarrow
H^{2n-1}(\Gamma, R\otimes\lambda). 
\]
As $\Delta$ has cohomological dimension $n-1$ we have an edge morphism
\begin{equation}
H^{2n-1}(\Gamma, R\otimes\lambda)\to 
H^n(\Gamma',H^{n-1}(\Delta,R\otimes \lambda)). 
\end{equation}
If we compose this with the cap-product with
$H_{n-1}(\Delta,\ZZ)$ we get a map
\[
\Eis:(A[D]^0)^\Gamma\otimes H_{n-1}(\Delta,\ZZ)\to H^{n}(\Gamma', (R\otimes\lambda)_\Delta).
\]
In order to get also the Eisenstein classes in other cohomological
degrees consider $\cO_N^{+,\times}:=\{u\in \cO_F^{+,\times}\mid
u\equiv 1\bmod N\}$ and the determinant map $\Gamma\xrightarrow{\det}\cO^{+,\times}_N$. For the rest of the
section we use the following notation:
\begin{equation}
\Lambda^\cdot:=\Lambda^\cdot\Hom(\cO^{+,\times}_N, \ZZ)= H^\cdot(\cO^{+,\times}_N,\ZZ).
\end{equation}
Then the map $\det$ gives rise to a ring homomorphism
\[
{\det}^*: \Lambda^\cdot\to H^\cdot(\Gamma,\ZZ)
\]
so that $H^\cdot(\Gamma, \ZZ)$ becomes a $\Lambda^\cdot$-module. 
Therefore \eqref{eq:Eis-map} yields the map 
\[
\Eis:(A[D]^0)^\Gamma\otimes \Lambda^p\to H^{2n-1+p}(\Gamma, R\otimes\lambda).
\]
A further composition with the edge morphism and the cap-product
with $H_{n-1}(\Delta,\ZZ)$ gives:
\begin{definition}
For each $0\le p\le n-1$ we define the Eisenstein cohomology
operator in degree $n+p$ to be the map
\begin{equation*}
\Eis:A[D]^0\otimes \Lambda^p\otimes H_{n-1}(\Delta,\ZZ)\to H^{n+p}(\Gamma', (R\otimes\lambda)_\Delta).
\end{equation*}
Composing with $\exp^*_k$ gives
\[
\Eis^k:A[D]^0\otimes \Lambda^p\otimes H_{n-1}(\Delta,\ZZ)\to H^{n+p}(\Gamma', (\TSym^kL_A)_\Delta\otimes\lambda).
\]
\end{definition}
\begin{remark}
For $A$ a $\QQ$-algebra, one can 
show that $(\TSym^kL_A)_\Delta=0$ if
$k$ is not a multiple of $n$ and non-trivial otherwise.
\end{remark}
Choose generators
for $ \Lambda^p$ and $H_{n-1}(\Delta,\ZZ)\isom {\Lambda}^{n-1}\Delta$, then
we get directly from the construction the following integrality result
for the Eisenstein cohomology:
\begin{proposition}
Let $\alpha\in \ZZ[\frac{1}{N}][D]^0$ 
then with the above generators
\[
\Eis^k_\alpha\in H^{n+p}(\Gamma', (\TSym^kL_{\ZZ[\frac{1}{N}]})_\Delta\otimes\lambda).
\]
\end{proposition}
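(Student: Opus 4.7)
The plan is to observe that every step in the construction of $\Eis^k_\alpha$ is already defined with coefficients in $A := \ZZ[\tfrac{1}{N}]$, so that integrality follows automatically once the construction is known to work over $A$. The proposition is really a statement about the \emph{naturality in $A$} of the Eisenstein cohomology operator; there is no additional arithmetic content beyond checking the starting point.

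The crucial input is that $N$ is invertible in $A$, which places $D \subset T[N]$ inside $T^{(A)}$. By Proposition \ref{prop:rho-can}, the canonical multiplicative trivialization $\varrho_\can$ of $\sLog$ on $T^{(A)}$ exists over $A$ and identifies $\sLog\mid_D$ with (the restriction of) $\underline{R}_A\mid_D$. Consequently the isomorphism
\[
\res : H^{2n-1}(T\setminus D, \Gamma;\, \sLog \otimes \lambda) \;\isom\; \ker\bigl(R_A[D] \to A\bigr)^\Gamma
\]
of Corollary \ref{cor:equiv-coh} is defined with $A$-coefficients on both sides, so that for $\alpha \in A[D]^0$ (viewed as a $\Gamma$-invariant element in the relevant cases) the Eisenstein operator of Definition \ref{def:Eisenstein-special-case} produces
\[
\Eis_\alpha \in H^{2n-1}(\Gamma,\, R_A \otimes \lambda).
\]

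From here the passage to $H^{n+p}(\Gamma', (R_A \otimes \lambda)_\Delta)$ for $0 \le p \le n-1$ uses only (i) the edge morphism of the Hochschild--Serre spectral sequence for $0 \to \Delta \to \Gamma \to \Gamma' \to 0$, (ii) cup-product with the image of $\Lambda^p$ in $H^p(\Gamma, \ZZ)$ under $\det^*$, and (iii) cap-product with the chosen integral generator of $H_{n-1}(\Delta, \ZZ)$. All three are natural group-cohomological operations that preserve the coefficient ring. Finally, the moment map $\exp^*_k : R_A \to \TSym^k L_A$ from Lemma \ref{lemma:exponential-map} is an $A$-algebra homomorphism, and the induced map on cohomology lands the resulting class in $H^{n+p}(\Gamma', (\TSym^k L_A)_\Delta \otimes \lambda)$, as claimed.

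The only place where the hypothesis $N \in A^\times$ enters essentially is the existence of $\varrho_\can$ over $A$ in the very first step; every subsequent step is formal and coefficient-preserving, so no further obstacle arises. The ``main difficulty,'' if any, is simply to make sure that no implicit denominator (beyond those dividing $N$) sneaks in along the chain of edge maps, cup and cap products, and the moment map --- which is immediate from the fact that each of these is induced by an $A$-linear map on the level of (pro-)modules.
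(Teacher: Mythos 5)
Your proposal is correct and follows essentially the same route as the paper, which offers no separate argument for this proposition beyond the remark that it is immediate from the construction: since $D\subset T^{(\ZZ[1/N])}$, the trivialization $\varrho_\can$, the residue isomorphism, the Hochschild--Serre edge map, the $\det^*$-multiplication by $\Lambda^p$, the cap-product with the chosen generator of $H_{n-1}(\Delta,\ZZ)$, and $\exp^*_k$ are all defined over $A=\ZZ[\frac{1}{N}]$, so the class lands in $H^{n+p}(\Gamma',(\TSym^kL_{\ZZ[\frac{1}{N}]})_\Delta\otimes\lambda)$. Your parenthetical care about $\Gamma$-invariance of $\alpha$ matches the paper's implicit convention that $\Gamma$ stabilizes $D$, so no gap arises there.
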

Keeping the generators and putting $A=\Zp$ we get
also a $p$-adic interpolation result. Recall \ref{prop:R-as-Iwasawa} that
for $A=\Zp$ one has an isomorphism $A[[L_{\Zp}]]\isom R$.
\begin{proposition}
With the above notations, for each  $\alpha\in \Zp[D]^0$ 
the class
\[
\Eis_\alpha\in H^{n+p}(\Gamma', (A[[L_{\Zp}]])_\Delta\otimes\lambda)
\]
has the interpolation property that 
$\mom^k(\Eis_\alpha)=\Eis^k_\alpha\in H^{n+p}(\Gamma', (\TSym^kL_{\Zp})_\Delta\otimes\lambda)$.
\end{proposition}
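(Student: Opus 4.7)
The plan is to read off the interpolation property directly from the definitions, by observing that every step in the construction of $\Eis_\alpha$ is functorial in the coefficient $\Zp[\Gamma]$-module, and that the moment map $\mom^k$ is by definition the map induced on coefficients by $\exp^*_k$ under the identification $\Zp[[L_{\Zp}]]\isom R$ of Proposition \ref{prop:R-as-Iwasawa}.

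First I would unwind the construction of $\Eis_\alpha$. Starting from the class $\Eis_\alpha(0)\in H^{2n-1}(\Gamma,R\otimes\lambda)$ of Definition \ref{def:Eisenstein-special-case}, one passes through (i) the edge morphism of the Hochschild--Serre spectral sequence for $0\to\Delta\to\Gamma\to\Gamma'\to 0$, (ii) cap product with the fixed generator of $H_{n-1}(\Delta,\ZZ)$, and (iii) cup product with the chosen generator of $\Lambda^p$ pulled back via $\det^*\colon\Lambda^\cdot\to H^\cdot(\Gamma,\ZZ)$. Each of these operations is natural in the coefficient $\Zp[\Gamma]$-module. The same sequence of operations, applied to $\Eis^k_\alpha(0)=\exp^*_k(\Eis_\alpha(0))\in H^{2n-1}(\Gamma,\TSym^kL_{\Zp}\otimes\lambda)$, produces $\Eis^k_\alpha$.

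Second, I would observe that the moment map
\[
\mom^k\colon A[[L_{\Zp}]]\xrightarrow{\isom} R\xrightarrow{\exp^*_k}\TSym^kL_{\Zp}
\]
is a $\Zp$-linear and $\Gamma$-equivariant homomorphism: $\Gamma$ acts through its action on $L$, and both the isomorphism of Proposition \ref{prop:R-as-Iwasawa} and the map $\exp^*$ of Lemma \ref{lemma:exponential-map} are functorial with respect to such actions. Hence $\mom^k$ induces homomorphisms on group cohomology at every stage, which commute with the edge morphism, cap product with a homology class in $H_{n-1}(\Delta,\ZZ)$, and cup product with a class in $H^p(\Gamma,\ZZ)$. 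Combining this with the first step,
\[
\mom^k(\Eis_\alpha)=(\text{edge}\circ\text{cap}\circ\text{cup})\bigl(\mom^k\Eis_\alpha(0)\bigr)=(\text{edge}\circ\text{cap}\circ\text{cup})(\Eis^k_\alpha(0))=\Eis^k_\alpha.
\]

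There is no substantive obstacle in the argument; the proof is essentially a bookkeeping exercise in naturality. The only point that warrants explicit verification is the compatibility of the identification $A[[L_{\Zp}]]\isom R$ with $\exp^*_k$ and with the $\Gamma$-actions on both sides, but this is built into Definition \ref{def:moment-map} together with Proposition \ref{prop:R-as-Iwasawa}. All remaining ingredients (edge morphism, cap, cup) are natural transformations of coefficient functors, so the statement follows at once.
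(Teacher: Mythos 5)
Your proposal is correct and is essentially the paper's argument: the paper simply states that the interpolation property is ``clear from the construction,'' and your write-up is the natural unwinding of that claim, namely that $\mom^k$ is a $\Gamma$-equivariant coefficient homomorphism (functoriality of Proposition \ref{prop:R-as-Iwasawa} and of $\exp^*$ in $L$), hence commutes with the edge morphism, the cap product with $H_{n-1}(\Delta,\ZZ)$, and the cup product with $\det^*\Lambda^p$, so that applying it to $\Eis_\alpha$ yields $\Eis^k_\alpha$.
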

\begin{proof}
This is clear from the construction.
\end{proof}
\begin{proposition}[Graf \cite{Graf}]
If $A$ is a $\QQ$-algebra then the product map 
\[
\bigoplus_p H^{2n-1-p}(\Gamma',R^\Delta\otimes \lambda)\otimes\Lambda^p\to 
H^{2n-1}(\Gamma, R\otimes \lambda)
\]
is an isomorphism.
\end{proposition}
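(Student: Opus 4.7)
The plan is to apply the Hochschild--Serre spectral sequence to the central extension $0\to\Delta\to\Gamma\to\Gamma'\to 0$ with coefficients $R\otimes\lambda$, to identify $E_2^{p,q}\cong H^p(\Gamma',R^\Delta\otimes\lambda)\otimes\Lambda^q$, to prove degeneration at $E_2$, and to check that the map in the proposition induces precisely this identification on the associated graded of the Hochschild--Serre filtration.

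For the $E_2$-page, three ingredients enter. First, $\Delta\subset\cO_F^\times$ acts trivially on $\lambda=\Lambda^{2n}L$, since multiplication by $u\in\cO_F^\times$ on $L$ has determinant $\N(u)^2=1$. Second, because $A$ is a $\QQ$-algebra, the action of $\Delta$ on $R\cong\widehat{\Sym}\,L_A$ is semisimple in the sense of Lemma \ref{lemma:coinvariants}: on each finite piece $\Sym^kL_A$ one extends scalars so that $L$ decomposes into $\Delta$-weight spaces, and the Koszul complex computing $H^\cdot(\Delta,-)$ is acyclic on every non-trivial weight because its differentials involve $\chi(u)-1$ which is a unit in the extension. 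Faithfully flat descent together with compatibility with the $I$-adic filtration yields
\[
H^q(\Delta,R\otimes\lambda)\;\cong\;(R^\Delta\otimes\lambda)\otimes_A H^q(\Delta,A).
\]
Third, the isogeny $\det|_\Delta\colon\Delta\to\cO_N^{+,\times}$, $u\mapsto u^2$, has finite cokernel and thus induces $\Lambda^q=H^q(\cO_N^{+,\times},A)\xrightarrow{\sim}H^q(\Delta,A)$, while centrality of $\Delta$ in $\Gamma$ forces $\Gamma'$ to act trivially on $H^q(\Delta,A)$. Combining these gives $E_2^{p,q}=H^p(\Gamma',R^\Delta\otimes\lambda)\otimes\Lambda^q$.

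Degeneration is a multiplicativity argument. For $\omega\in\Lambda^q$, the pulled-back class $\det^*\omega\in H^q(\Gamma,A)$ restricts to $\omega\in H^q(\Delta,A)$ under the iso above and is therefore a permanent cycle in the $A$-coefficient spectral sequence. The spectral sequence for $R\otimes\lambda$-coefficients is a module over that for $A$, and by the tensor factorization every element of $E_r^{p,q}$ is a sum of products $\xi\cdot\omega$ with $\xi\in E_r^{p,0}$. The Leibniz rule gives $d_r(\xi\cdot\omega)=d_r(\xi)\cdot\omega\pm\xi\cdot d_r(\omega)$; the second term vanishes since $\omega$ is a permanent cycle, and the first vanishes because $d_r(\xi)\in E_r^{p+r,1-r}=0$ for $r\ge 2$. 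Inductively, $d_r\equiv 0$ for all $r\ge 2$ and $E_\infty=E_2$.

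Finally, the product map $\phi\colon\xi\otimes\omega\mapsto\mathrm{infl}(\xi)\cup\det^*\omega$ respects the Hochschild--Serre filtration (inflation lies in the step $F^{m-q}H^m(\Gamma,R\otimes\lambda)$ associated to $\Gamma'$-degree $m-q$, and cupping with $\det^*\omega\in F^0$ remains there), and its associated graded reproduces the identification $H^{m-q}(\Gamma',R^\Delta\otimes\lambda)\otimes\Lambda^q=E_2^{m-q,q}=E_\infty^{m-q,q}$ from the second paragraph. A filtration-preserving map with bijective associated graded is itself an isomorphism, so $\phi$ is. The main technical subtlety lies in the semisimplicity step: the weight-space decomposition has to be carried out on each finite piece $\Sym^kL_A$ and reassembled compatibly through the filtration $I^\cdot$ before passing to the completion $R$.
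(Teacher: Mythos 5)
Your argument is correct and follows essentially the same route as the paper: the Hochschild--Serre spectral sequence for $0\to\Delta\to\Gamma\to\Gamma'\to 0$ together with the semisimplicity of the $\Delta$-action on $R$ over a $\QQ$-algebra, which (as in Remark \ref{rem:coinvariants}, where the paper phrases it via the projection $p_\Delta:R\to R_\Delta$ rather than via invariants $R^\Delta$ --- the two agree by the analogue of Lemma \ref{lemma:coinvariants}) gives $H^q(\Delta,R\otimes\lambda)\isom R^\Delta\otimes\lambda\otimes\Lambda^q$. You merely spell out what the paper leaves implicit in ``the result follows from the Hochschild--Serre spectral sequence,'' namely the $E_2$-degeneration via multiplicativity and permanent cycles $\det^*\omega$ and the identification of the product map with the filtration's associated graded.
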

\begin{proof}
One has $\Lambda^\cdot\otimes\QQ\isom H^\cdot(\Delta,\QQ)$. As in
Remark \ref{rem:coinvariants} the projection $p_\Delta:R\to R_\Delta$
then gives rise to  isomorphisms
\[
H^p(\Delta, R\otimes\lambda)\isom H^p(\Delta,R_\Delta\otimes\lambda)\isom
R_\Delta\otimes\lambda\otimes\Lambda^p
\]
and the result follows from the Hochschild-Serre spectral sequence.
\end{proof}
\begin{remark}
In his thesis Graf decomposes the topological polylogarithm according to the isomorphism in the above
proposition and shows how  the
resulting cohomology classes are related with the ones constructed
by Harder. For this he explicitly computes the residue of this
classes at the boundary. 
\end{remark}

\bibliographystyle{amsalpha}
\bibliography{integral-polylog}

\providecommand{\bysame}{\leavevmode\hbox to3em{\hrulefill}\thinspace}
\providecommand{\MR}{\relax\ifhmode\unskip\space\fi MR }
\providecommand{\MRhref}[2]{%
  \href{http://www.ams.org/mathscinet-getitem?mr=#1}{#2}
}
\providecommand{\href}[2]{#2}
\begin{thebibliography}{SGA72}

\bibitem[Bar78]{Barsky}
Daniel Barsky, \emph{Fonctions zeta {$p$}-adiques d'une classe de rayon des
  corps de nombres totalement r\'eels}, Groupe d'{E}tude d'{A}nalyse
  {U}ltram\'etrique (5e ann\'ee: 1977/78), Secr\'etariat Math., Paris, 1978,
  pp.~Exp. No. 16, 23.

\bibitem[BL94]{BeilinsonLevin}
Alexander~A. Be{\u\i}linson and Andrey Levin, \emph{The elliptic
  polylogarithm}, Motives ({S}eattle, {WA}, 1991), Proc. Sympos. Pure Math.,
  vol.~55, Amer. Math. Soc., Providence, RI, 1994, pp.~123--190.

\bibitem[Blo09]{Blottiere1}
David Blotti{\`e}re, \emph{R\'ealisation de {H}odge du polylogarithme d'un
  sch\'ema ab\'elien}, J. Inst. Math. Jussieu \textbf{8} (2009), no.~1, 1--38.

\bibitem[CD12]{Charollois-Dasgupta}
Pierre Charollois and Samit Dasgupta, \emph{Integral {E}isenstein cocycles on
  {${\rm GL}_n$}, {I} : {S}czech's cocycle and p-adic {L}-functions of totally
  real fields}, preprint, 2012.

\bibitem[CN79]{Cassou-Nogues}
Pierrette Cassou-Nogu{\`e}s, \emph{Valeurs aux entiers n\'egatifs des fonctions
  z\^eta et fonctions z\^eta {$p$}-adiques}, Invent. Math. \textbf{51} (1979),
  no.~1, 29--59.

\bibitem[DR80]{Deligne-Ribet}
Pierre Deligne and Kenneth~A. Ribet, \emph{Values of abelian {$L$}-functions at
  negative integers over totally real fields}, Invent. Math. \textbf{59}
  (1980), no.~3, 227--286.

\bibitem[Gra14]{Graf}
Philipp Graf, \emph{Forthcoming thesis on {E}isenstein cohomology on {H}ilbert
  modular varieties and the topological polylogarithm}, Universit\"at
  Regensburg, 2014.

\bibitem[Hil07]{Hill}
Richard Hill, \emph{Shintani cocycles on {${\rm GL}_n$}}, Bull. Lond. Math.
  Soc. \textbf{39} (2007), no.~6, 993--1004.

\bibitem[Kin08]{Kings08}
Guido Kings, \emph{Degeneration of polylogarithms and special values of
  {$L$}-functions for totally real fields}, Doc. Math. \textbf{13} (2008),
  131--159.

\bibitem[Kin13]{Kings-Eisenstein}
Guido Kings, \emph{Eisenstein classes, elliptic {S}oul{\'e} elements and the
  {$\ell$}-adic elliptic polylogarithm}, preprint, 2013.

\bibitem[KLZ14]{KLZ1}
Guido Kings, David Loeffler, and Sarah Zerbes, \emph{Rankin-{S}elberg {E}uler
  systems and $p$-adic interpolation}, preprint, 2014.

\bibitem[KS90]{KashiwaraSchapira}
Masaki Kashiwara and Pierre Schapira, \emph{Sheaves on manifolds}, Grundlehren
  der Mathematischen Wissenschaften [Fundamental Principles of Mathematical
  Sciences], vol. 292, Springer-Verlag, Berlin, 1990, With a chapter in French
  by Christian Houzel.

\bibitem[Nor95]{Nori}
Madhav~V. Nori, \emph{Some {E}isenstein cohomology classes for the integral
  unimodular group}, Proceedings of the {I}nternational {C}ongress of
  {M}athematicians, {V}ol.\ 1, 2 ({Z}\"urich, 1994) (Basel), Birkh\"auser,
  1995, pp.~690--696.

\bibitem[Scz93]{Sczech}
Robert Sczech, \emph{Eisenstein group cocycles for {${\rm GL}_n$} and values of
  {$L$}-functions}, Invent. Math. \textbf{113} (1993), no.~3, 581--616.

\bibitem[SGA72]{SGA7I}
\emph{Groupes de monodromie en g\'eom\'etrie alg\'ebrique. {I}}, Lecture Notes
  in Mathematics, Vol. 288, Springer-Verlag, Berlin-New York, 1972,
  S{\'e}minaire de G{\'e}om{\'e}trie Alg{\'e}brique du Bois-Marie 1967--1969
  (SGA 7 I), Dirig{\'e} par A. Grothendieck. Avec la collaboration de M.
  Raynaud et D. S. Rim. \MR{0354656 (50 \#7134)}

\bibitem[Sie70]{Siegel-Fourier}
Carl~Ludwig Siegel, \emph{\"{U}ber die {F}ourierschen {K}oeffizienten von
  {M}odulformen}, Nachr. Akad. Wiss. G\"ottingen Math.-Phys. Kl. II
  \textbf{1970} (1970), 15--56.

\bibitem[Sie80]{Siegel}
\bysame, \emph{Advanced analytic number theory}, second ed., Tata Institute of
  Fundamental Research Studies in Mathematics, vol.~9, Tata Institute of
  Fundamental Research, Bombay, 1980.

\bibitem[Spi14]{Spiess-Shintani-cocycles}
Michael Spiess, \emph{Shintani cocycles and the order of vanishing of
  {$p$}-adic {H}ecke {$L$}-series at {$s=0$}}, Math. Ann. \textbf{359} (2014),
  no.~1-2, 239--265.

\bibitem[Ste12]{Steele}
G.~Ander Steele, \emph{The $p$-adic {S}hintani cocycle}, preprint, 2012.

\end{thebibliography}
\end{document}